\newtheorem{assumption}{Assumption}
\def\1{\bm{1}}
\DeclareMathAlphabet{\mathsfit}{\encodingdefault}{\sfdefault}{m}{sl}
\SetMathAlphabet{\mathsfit}{bold}{\encodingdefault}{\sfdefault}{bx}{n}
\newcommand{\E}{\mathbb{E}}
\newcommand{\R}{\mathbb{R}}
\DeclareMathOperator*{\argmin}{arg\,min}
\newcommand{\bx}{{\mathbf x}}
\newcommand{\by}{{\mathbf y}}
\renewcommand{\u}{\mathbf{u}}
\renewcommand{\v}{\mathbf{v}}
\newcommand{\bz}{{\mathbf z}}
\newcommand{\bw}{{\mathbf w}}
\newcommand{\X}{{\mathcal X}}
\newcommand{\Y}{{\mathcal Y}}
\newcommand{\Z}{{\mathcal Z}}
\newcommand{\bxi}{\boldsymbol{\xi}}
\newcommand{\bzeta}{\boldsymbol{\zeta}}
\newcommand{\bzt}{\boldsymbol{\zeta}}
\newcommand{\reals}{\mathbb{R}}
\def \x {\mathbf{x}}
\def \y {\mathbf{y}}
\def \z {\mathbf{z}}
\def \w {\mathbf{w}}
\def \F {\mathcal{F}}
\def \zu {\z_*}
\def \wh {\widehat{\w}}
\def \zh {\widehat{\z}}
\newcommand{\bu}{{\mathbf u}}
\newcommand{\bv}{{\mathbf v}}
\begin{document}

\title{First-order Convergence Theory for Weakly-Convex-Weakly-Concave Min-max Problems}

\author{\name Mingrui Liu \email mingruiliu.ml@gmail.com \\
       \addr  Department of Computer Science\\
       The University of Iowa\\
   Iowa City, IA, 52242, USA
       \AND
       \name Hassan Rafique \email hassan-rafique@uiowa.edu \\
       \addr Department of Mathematics\\
       The University of Iowa\\
       Iowa City, IA, 52242, USA
   	\AND
   \name Qihang Lin \email qihang-lin@uiowa.edu \\
   \addr Business Analytics Department\\
   The University of Iowa\\
   Iowa City, IA, 52242, USA
   \AND
   \name Tianbao Yang \email tianbao-yang@uiowa.edu \\
   \addr Department of Computer Science\\
   The University of Iowa\\
   Iowa City, IA, 52242, USA}

\editor{Zhihua Zhang}

\maketitle

\begin{abstract}
In this paper, we consider first-order convergence theory and algorithms for solving a class of non-convex non-concave min-max saddle-point problems, whose objective function is weakly convex in the variables of minimization and weakly concave in the variables of maximization. 
It  has many important applications in machine learning including training Generative Adversarial Nets (GANs). We propose an algorithmic framework  motivated by the inexact proximal point method, where the weakly monotone variational inequality (VI) corresponding to the original min-max problem is solved through approximately solving a sequence of strongly monotone VIs constructed by adding a strongly monotone mapping to the original gradient mapping. 
We prove first-order convergence to  a nearly stationary  solution of the original min-max problem of the generic algorithmic framework and establish different rates by employing different algorithms for solving each strongly monotone VI. 
Experiments verify the convergence theory and also demonstrate the effectiveness of the proposed methods on training GANs. 
\end{abstract}

\begin{keywords}
  Weakly-Convex-Weakly-Concave, Min-max, Generative Adversarial Nets, Variational Inequality, First-order Convergence
\end{keywords}

\section{Introduction}
This paper is motivated by solving the following min-max saddle-point problem:
\begin{align}\label{eqn:P}
\min_{\x\in\X}\max_{\y\in\Y}  f(\x, \y),
\end{align}
where $\X\subset\mathbb{R}^p$ and $\Y\subset\mathbb{R}^q$ are closed convex sets, and $f(\x, \y)$ is real-valued, continuous, not necessarily convex in $\x$, and not necessarily concave in $\y$. This problem has broad applications in machine learning and statistics. One such example that recently attracts tremendous attention in machine learning is training Generative Adversarial Networks (GAN) where $\x$ denotes the parameter of the generator network and $\y$ denotes the parameter of the discriminator network~\citep{Goodfellow:2014:GAN:2969033.2969125}. 

Problem \eqref{eqn:P} has been well studied when $f(\x, \y)$ is convex in $\x$ and concave in $\y$ and many algorithms have been developed with provable non-asymptotic convergence properties~\citep{journals/sovnemir,nemirovski-2005-prox,nemirovski2009robust,Chambolle:2011:FPA:1968993.1969036,DBLP:journals/ml/YangMJZ15}. However,  when $f(\x, \y)$ is non-convex in $\x$ and non-concave in $\y$, the developments of algorithms for~(\ref{eqn:P}) with provable non-asymptotic convergence guarantees remain rare and most existing studies under this setting only focus on asymptotic convergence analysis~\citep{doi:10.1137/15M1026924,DBLP:journals/corr/abs-1711-00141,DBLP:conf/nips/HeuselRUNH17,DBLP:conf/nips/NagarajanK17,Solodov1999ANP,Wang2001,bao05pr,DBLP:journals/corr/corr1609}. Besides non-asymptotic convergence, an important property of an algorithm for \eqref{eqn:P} is what type of solutions it can guarantee. In the studies of convex-concave saddle-point problems, one is interested in finding a \emph{saddle point} $(\x_*, \y_*)$ that satisfies
$f(\x_*, \y)\leq f(\x_*, \y_*)\leq f(\x, \y_*), \forall \x\in\X, \forall\y\in\Y$.
However, finding such a saddle point for \eqref{eqn:P} is in general NP-hard (in fact, minimizing a generic non-convex function is already NP-hard (see e.g.,~\citep{Hillar:2013:MTP:2555516.2512329}). Following the existing studies on non-convex optimization, we may instead consider finding a first-order \emph{stationary} solution to~(\ref{eqn:P}), which is a solution $(\x, \y)\in\mathcal F_*$ with
\begin{align}
\label{eq:Fstar}
\mathcal F_*=\left\{(\x, \y)\in \X\times \Y\bigg\vert
\begin{array}{l}
0\in\partial_x [f(\x, \y)+ 1_{\X}(\x)],\\ 
0\in\partial_y [-f(\x, \y) + 1_{\Y}(\y)]
\end{array}
\right\},
\end{align}
where $\partial_x$ and $\partial_y$ are partial subdifferentials defined in Section~\ref{sec:pre}, and $1_{\X}(\x)$ and $1_{\Y}(\y)$ are the indicator functions of the sets $\X$ and $\Y$, respectively. It is notable that $(\x, \y)\in \F_*$ is the first-order necessary condition for $(\x, \y)$ to be a (local) saddle point of~(\ref{eqn:P}). However, iterative algorithms typically do not guarantee an exact stationary solution within finitely many iterations. Hence, to establish  non-asymptotic convergence of algorithms for \eqref{eqn:P}, we focus on finding a \emph{nearly $\epsilon$-stationary} solution  which is a useful notion of stationarity when a problem is non-smooth or constrained. 

\begin{table*}[t]
	\caption{Summary of complexity results for finding a nearly $\epsilon$-stationary solution of the $\rho$-weakly-convex-weakly-concave min-max saddle-point problems (where $\rho>0$) by using different algorithms to solve the strongly monotone subproblems. In the stochastic and deterministic settings, the complexity refers to the iteration complexity and, in the finite-sum setting, it refers to gradient complexity (i.e., the number of gradients computed). $L$ refers to the Lipschitz constant. Note that the per-iteration cost in deterministic setting is the same as evaluating the (sub)gradient, while in the stochastic and finite-sum setting it is the same as evaluating the stochastic gradient based on one individual component function.
	}
	\centering
	\label{tab:2}
	\scalebox{1}{\begin{tabular}{l|l|l|ll}
			\toprule
			Setting&  Algorithms for sub-problems &Lipschitz  &Complexity\\
			\midrule
			Stochastic&Stochastic Subgradient Method&No&$ O\left(\max(\rho^6, \rho^3)/\epsilon^6\right)$\\
			\midrule
			&Subgradient Method&  No & {$ O\left(\max(\rho^6, \rho^3)/\epsilon^6\right)$}\\ 
			Deterministic &Gradient Descent Method&Yes&$\widetilde O\left(L^2/\epsilon^2\right)$\\ 
			&Extragradient Method&Yes&$\widetilde O\left( L\rho/\epsilon^2 \right)$\\ 
			&Nesterov's Accelerated Method&Yes&$\widetilde O\left( L\rho/\epsilon^2 \right)$\\ 
			\midrule
			Finite-sum ($n$ summands) &Variance Reduction &Yes&$\widetilde O\left(n\rho^2/\epsilon^2 + L^2/\epsilon^2\right)$\\
			\bottomrule
	\end{tabular}}
\end{table*}

Although $\F_*$ is a direct extension of the set of first-order stationary points of minimization problems to min-max problems, it remains unclear how to prove an algorithm converges non-asymptotically to $\F_*$ when the min-max problem is non-convex non-concave. To the best of our knowledge, {\bf this is the first work that proves the non-asymptotic convergence} of first-order methods to a nearly stationary solution of a class of non-smooth non-convex non-concave min-max problems.  The key novelty of our analysis is viewing min-max problems through the lens of variational inequalities. 

	Let $F(\bz):\mathcal{Z}\rightrightarrows \mathbb{R}^d$ be a set-valued mapping and $\mathcal{Z}\subset \mathbb{R}^d$ be a closed convex set. The \emph{variational inequality} (VI) problem, also known as the \emph{Stampacchia variational inequality} (SVI) problem~\citep{hartman1966some}, associated with $F$ and $\mathcal{Z}$, is denoted by $\text{SVI}(F,\mathcal{Z})$ and concerns finding $\bz^*\in\mathcal{Z}$ such that 
	
	\begin{eqnarray}\label{eq:SVI}
	\exists \bxi^*\in F(\bz^*)\text{ s.t. }\left\langle \bxi^*, \bz-\bz^*\right\rangle\geq 0,~\forall  \bz\in\mathcal{Z}.
	\end{eqnarray}
	A closely related but different problem is the \emph{Minty variational inequality} (MVI) problem~\citep{minty1962monotone} associated with $F$ and $\mathcal{Z}$, which is denoted by $\text{MVI}(F,\mathcal{Z})$ and concerns finding $\bz_*\in\mathcal{Z}$ such that 
	\begin{eqnarray}\label{eq:MVI}
	\left\langle \bxi, \bz-\bz_*\right\rangle\geq 0,~\forall  \bz\in\mathcal{Z},\forall  \bxi\in F(\bz).
	\end{eqnarray}
	The SVI and MVI corresponding to problem~(\ref{eqn:P}) are defined with the set $\Z=\X\times\Y$ and
	the mapping $F(\bz) := (\partial_x f(\x, \y),  \partial_y[- f(\x, \y)])^{\top}$ where $\bz=(\x,\y)$.  
	The main contributions of this paper are summarized below:
	\begin{itemize}[leftmargin=*]
		\item We propose a generic algorithm motivated by the inexact proximal point method~\citep{davis2017proximally} for solving a class of non-convex non-concave min-max problems, whose objective function $f(\x, \y)$ is weakly convex in $\x$ for any fixed $\y\in\Y$ and weakly concave in $\y$  for any fixed $\x\in\X$. The algorithm consists of approximately solving a sequence of strongly monotone VIs constructed by adding a strongly monotone mapping to $F(\bz)$ with a sequentially updated proximal center.

		\item We prove the theoretical convergence of the proposed algorithm under the key assumption that \emph{there exists a solution to the $\text{MVI}(F,\mathcal{Z})$ corresponding to~(\ref{eqn:P})}. We establish the iteration complexities for finding a nearly $\epsilon$-stationary solution when different algorithms, including the stochastic subgradient method, the gradient descent method, extragradient method, and the variance reduction methods, are employed as a subroutine for solving each strongly monotone VI in the proposed framework. In particular, the iteration complexity is $O(1/\epsilon^6)$ when using the stochastic subgradient method and is $\widetilde O(1/\epsilon^2)$ when using the gradient method, or the extragradient method under the additional smoothness assumption of $f$.~\footnote{$\widetilde O$ suppresses all logarithmic factors.} Here, the complexity when using Nesterov's accelerated method or the extragradient method improves the one when using the gradient method by a constant factor when a condition number of the original problem is large. Furthermore, if $f$ is of a finite-sum form we can use variance reduction methods to improve the complexity.  The achieved complexity results are presented in Table~\ref{tab:2}.
		
		\item Moreover, our algorithms are directly applicable  to the more general problem of solving the SVI when $F$ is weakly monotone, and our analysis directly implies the non-asymptotic convergence to a nearly $\epsilon$-accurate solution to the SVI under the condition that a solution to the corresponding MVI exists. 
	\end{itemize}
	

	{\bf Application in Training GAN.}\label{sec:apps}
	The formulation \eqref{eqn:P} has broad applications in machine learning, statistics and operations research. Here we present one example in machine learning for training GAN~\citep{Goodfellow:2014:GAN:2969033.2969125,pmlr-v70-arjovsky17a,DBLP:conf/nips/GulrajaniAADC17}.  GAN refers to a powerful class of generative models that cast generative modeling as a game between two networks: a generator network produces synthetic data out of noise and a discriminator network discriminates between the true data and the generator's output. Let us consider WGAN~\citep{pmlr-v70-arjovsky17a}, a recently proposed variant of GAN, as an example. The min-max problem corresponding to WGAN can be written as 
	\begin{align*}
	\min_{\theta\in\Theta}\max_{\w\in\mathcal W} \left\{\E_{\x\sim\mathbb P_r}[f_\w(\x)] - \E_{\z\sim\mathbb P_\z}[f_\w(g_\theta(\z))]\right\},
	\end{align*}
	where $f_\w(\x)$ denotes a Lipschitz continuous function parameterized  by $\w$ corresponding to the discriminator, $g_\theta(\z)$ denotes the parameterized function corresponding to the generator, $\mathbb P_r$ denotes the underlying distribution of the data $\bx$, and $\mathbb P_\z$ denotes the distribution of noise $\bz$. Functions $f_\w(\cdot)$ and $g_\theta(\cdot)$ are usually represented by deep neural networks. When the deep neural networks induce smooth $f_\w(\cdot)$ and $g_\theta(\cdot)$ (for example by using smooth activation functions), it can be showed that the objective function of WGAN is weakly convex in $\theta$ and weakly concave in $\w$.   
	More applications of~(\ref{eqn:P}) with weakly-convex-weakly-concave function $f(\x, \y)$ can be found in reinforcement learning~\citep{DBLP:conf/icml/DaiS0XHLCS18}, learning a robust model under heavy-tailed noise~\citep{audibert2011},   adversarial learning~\citep{DBLP:journals/corr/abs-1710-10571}, etc (cf. examples provided in the Appendix). 
	
\section{Related Work}

There is a growing interest on  first-order algorithms for solving non-convex problems, e.g., \citep{DBLP:journals/siamjo/GhadimiL13a,yangnonconvexmo,DBLP:journals/mp/GhadimiL16,DBLP:conf/cdc/ReddiSPS16,Reddi:2016:SVR:3045390.3045425,DBLP:journals/corr/abs/1805.05411,DBLP:conf/icml/Allen-Zhu17,DBLP:conf/icml/ZhuH16,sgdweakly18,davis2018stochastic,zhang2018convergence,davis2018stochastic}, and this list is by no means complete. 
The proposed algorithm shares the similarity with several of these previous works~\citep{davis2017proximally,chen18stagewise,DBLP:journals/corr/abs/1805.05411,DBLP:conf/icml/Allen-Zhu17,chen18stagewisekatyusha} by using techniques related to the proximal-point method, which 
has a long history~\citep{citeulike:9472207}. 
The work of~\citep{shamir2020can,zhang2020complexity} considered first-order algorithms for nonsmooth nonconvex minimization problems.
However, their analysis cannot be directly extended to weakly-convex weakly-concave min-max problems, which is more challenging. 

Several recent works~\citep{NIPS2017_7056,DBLP:journals/corr/abs-1805-07588,DBLP:journals/corr/abs-1710-10571,hassan18nonconvexmm,DBLP:conf/icml/DaiS0XHLCS18} have considered non-convex min-max problems and their applications in machine learning. However, their  algorithms and analysis are either built on restricted assumptions (e.g., the maximization problem can be solved exactly) or only applied to a much smaller family non-convex min-max problems. 
\citet{hassan18nonconvexmm} considered weakly-convex and concave min-max problems. 
Relying on the concavity of the maximization part, they are able to establish the convergence to a nearly stationary point of the function $\max_{\y\in\Y}  f(\x, \y)$. \citet{thekumparampil2019efficient} developed deterministic first-order algorithms for strongly-convex-concave and nonconvex-concave min-max problems respectively. ~\citet{lin2020near} provided near-optimal algorithms for strongly-convex-strongly-concave min-max problems. In contrast, the problem considered in this paper is weakly-convex and weakly-concave and covers even more applications in machine learning (e.g., GAN training).  
Recently, there emerges a wave of studies that analyze the convergence properties of some algorithms for training GAN~\citep{doi:10.1137/15M1026924,DBLP:conf/nips/anageas18,DBLP:journals/corr/abs-1711-00141,DBLP:conf/nips/HeuselRUNH17,DBLP:conf/nips/NagarajanK17,DBLP:journals/corr/GrnarovaLLHK17}. 
However, their results are either asymptotic~\citep{DBLP:journals/corr/abs-1711-00141,DBLP:conf/nips/HeuselRUNH17,DBLP:conf/nips/NagarajanK17,doi:10.1137/15M1026924} or their analysis require strong assumptions of the problem~\citep{DBLP:conf/nips/NagarajanK17,DBLP:journals/corr/GrnarovaLLHK17} (e.g., the problem is concave in maximization). 
We also notice that two recent papers \citep{sim18ganvi,panomd18} have also considered the algorithms for min-max problems from the perspective of variational inequalities. However, the analysis in~\citep{sim18ganvi} is for convex-concave problems or monotone variational inequalities. \citet{panomd18} proved asymptotic convergence to saddle points of the min-max problem under a strong coherence assumption, i.e., every saddle point is a solution to the corresponding MVI. In contrast, we only assume the MVI has a solution and prove non-asymptotic convergence to a nearly $\epsilon$-stationary point.


Both SVI and MVI have a long history in the literature~\citep{harker1990finite}. 
When the set-valued mapping $F$ is non-monotone, there have been many studies that design and analyze algorithms for finding a solution to the SVI problem~\citep{Solodov1999ANP,bao05pr,DBLP:journals/corr/corr1609,allevi2006proximal,DBLP:journals/coap/DangL15,DBLP:journals/siamjo/IusemJOT17}. However, the main difference between these works and the present work is that their convergence analysis is asymptotic except for~\citep{DBLP:journals/coap/DangL15,DBLP:journals/siamjo/IusemJOT17}. 
\citet{DBLP:journals/coap/DangL15}  established the first non-asymptotic convergence for non-monotone SVI by deterministic algorithms when the mapping $F(\bz)$ is single-valued and Lipschitz (or H\"{o}lder) continuous. \citet{DBLP:journals/siamjo/IusemJOT17} analyzed a variant of stochastic extragradient method with increasing mini-batch size for solving SVI with a single-valued and Lipschitz continuous mapping and obtained several complexity results but for a different convergence measure from~\citep{DBLP:journals/coap/DangL15} and ours.  
In contrast, our work provides the first non-asymptotic convergence of stochastic and deterministic algorithms for solving the SVI problems with a set-valued mapping that is non-Lipschitz and non-H\"{o}lder continuous but weakly monotone. 
%
%
%

\section{Preliminaries}\label{sec:pre}
We present some preliminaries in this section. For simplicity, we consider \eqref{eqn:P} defined in the Euclidean space with inner product $\langle \bz,\bz'\rangle=\bz^\top\bz'$. We use $\|\cdot\|$ to represent the Euclidean norm.  Let $\text{Proj}_{\Z}[\z]$ denote an Euclidean projection mapping that projects $\z$ onto the set $\Z$. 
Given a function $h:\reals^d\rightarrow \reals\cup\{+\infty\}$, we define the  \emph{(Fr\'echet) subdifferential} of $h$ as 
\begin{align}
\label{subgradient}
&\partial h(\bx)=\left\{\bzt\in\mathbb{R}^d\bigg| 
\begin{array}{l}
h(\bx')
\geq h(\bx)+\bzt^\top(\bx'-\bx)+o(\|\bx'-\bx\|),
\\ \bx'\rightarrow\bx
\end{array}
\right\},
\end{align}
where each element in $\partial h(\bx)$ is called a \emph{(Fr\'echet) subgradient} of $h$ at $\bx$. In this paper, we will analyze the convergence of an iterative algorithm for solving~(\ref{eqn:P}) through the lens of variational inequalities. To this end, we first introduce some background related to variational inequalities. 
\begin{definition}
	\label{def:monotone}
	A set-valued mapping $F(\bz):\Z\rightrightarrows \mathbb{R}^d$  is said to be {monotone}
	if
	$\left\langle \bxi-\bxi', \bz-\bz'\right\rangle\geq 0$, 
	{$\mu$-strongly monotone} for $\mu>0$ if 
	$\left\langle \bxi-\bxi', \bz-\bz'\right\rangle\geq \mu\|\bz-\bz'\|^2$, 
	and {$\rho$-weakly monotone} for $\rho>0$ if 
	$\left\langle \bxi-\bxi', \bz-\bz'\right\rangle\geq -\rho\|\bz-\bz'\|^2$,
	for any $\bz,\bz'\in\mathcal{Z}$, $\bxi\in F(\bz)$, and $\bxi'\in F(\bz')$.
\end{definition}
By a slight abuse of notation, when $F(\bz)$ is a singleton set, we will use $F(\z)$ to represent the single element in the set. 
A (single-valued) mapping $F(\bz):\Z\rightarrow \mathbb{R}^d$  is said to be $L$-Lipschitz continuous if
$\|F(\z)- F(\z')\|\leq L\|\z - \z'\|,~\forall  \bz,\bz'\in\mathcal{Z}$. It is well-known that $\text{SVI}(F,\Z)$ has a \emph{unique solution} if $F$ is $\mu$-strongly monotone~\citep{1078-0947Nesterov}.  

For the min-max problem~(\ref{eqn:P}), we define $\z=(\bx,\by)^{\top}$, $F(\bz)\equiv (\partial_x f(\x, \y), - \partial_y f(\x, \y))^{\top}$ and $\Z\equiv\X\times \Y$. The SVI corresponding to~(\ref{eqn:P}) is defined by such a $F(\bz)$ and $\Z$. 
If $F(\bz)$ is $L$-Lipchitz continuous it is also $L$-weakly monotone. 
However, Lipchitz continuity of $F$ is not necessary for $F$ to be weakly monotone. Below, we show that if $f(\x, \y)$ is weakly convex in terms of $\x$ and weakly concave in terms of $\y$, then its corresponding $F$ is weakly monotone. To this end, we first introduce the definition of weakly convex and weakly concave. 

\begin{definition} $f(\x,\y)$ is $\rho$-weakly-convex-weakly-concave if for any $\y\in\Y$, $f(\x, \y)+ \frac{\rho}{2}\|\x\|^2$ is convex in  $\x$, and,  for any $\x\in\X$, $f(\x, \y) -  \frac{\rho}{2}\|\y\|^2$ is concave  in $\y$. 
\end{definition} 
In Section~\ref{sec:examples} in Appendix, we present some examples of the min-max problem whose objective function is weakly-convex and weakly-concave, which are not necessarily smooth functions.  
The following lemma shows the connection between weak-convexity weak-concavity and weak-monotonicity. Its proof and all missing proofs are included in the~Appendix.
\begin{lemma}~\label{lem:weaklymonotone}
	$f(\bx,\by)$ is $\rho$-weakly-convex-weakly-concave if and only if $F(\bz)$ is $\rho$-weakly monotone. 
\end{lemma}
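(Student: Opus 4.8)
Since the statement is an equivalence, I would establish the two implications separately. For both, fix two points $\bz=(\x,\y)$ and $\bz'=(\x',\y')$ in $\Z=\X\times\Y$, and write a generic element of $F(\bz)$ as $(\bu,\bv)$ with $\bu\in\partial_x f(\x,\y)$ and $\bv\in\partial_y[-f(\x,\y)]$, and likewise $(\bu',\bv')\in F(\bz')$. The object to control is
\begin{align*}
\langle F(\bz)-F(\bz'),\bz-\bz'\rangle=\langle \bu-\bu',\x-\x'\rangle+\langle \bv-\bv',\y-\y'\rangle.
\end{align*}

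For the forward direction (weakly-convex-weakly-concave $\Rightarrow$ weakly monotone), the plan is to turn the assumption into the one-sided subgradient inequality. Using the exact sum rule $\partial_x[f(\cdot,\y)+\tfrac{\rho}{2}\|\cdot\|^2]=\partial_x f(\cdot,\y)+\rho\,\x$ (the added term is smooth) and convexity, then completing the square, $\rho$-weak convexity of $f(\cdot,\y)$ is equivalent to
\begin{align*}
f(\x',\y)\geq f(\x,\y)+\langle \bu,\x'-\x\rangle-\tfrac{\rho}{2}\|\x-\x'\|^2,
\end{align*}
and symmetrically for the convex function $-f(\x,\cdot)$. I would write four such inequalities at the \emph{cross} points: convexity in $\x$ at fixed $\y$ (from $\x$ to $\x'$) and at fixed $\y'$ (from $\x'$ to $\x$), and concavity in $\y$ at fixed $\x$ (from $\y$ to $\y'$) and at fixed $\x'$ (from $\y'$ to $\y$). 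Summing the four, the function-value terms $f(\x',\y),f(\x,\y'),f(\x,\y),f(\x',\y')$ cancel on both sides, leaving exactly $\langle \bu-\bu',\x-\x'\rangle+\langle \bv-\bv',\y-\y'\rangle\geq-\rho\|\x-\x'\|^2-\rho\|\y-\y'\|^2=-\rho\|\bz-\bz'\|^2$, i.e.\ $\rho$-weak monotonicity of $F$.

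For the converse, I would specialize the weak monotonicity inequality. Setting $\y'=\y$ annihilates the $\y$ inner product and the $\|\y-\y'\|^2$ term, yielding $\langle \bu-\bu',\x-\x'\rangle\geq-\rho\|\x-\x'\|^2$ for all $\bu\in\partial_x f(\x,\y)$ and $\bu'\in\partial_x f(\x',\y)$; equivalently the operator $\x\mapsto\partial_x f(\x,\y)+\rho\,\x=\partial_x[f(\cdot,\y)+\tfrac{\rho}{2}\|\cdot\|^2]$ is monotone for each fixed $\y$. Invoking the standard characterization that a proper lower semicontinuous function is convex if and only if its (Fr\'echet) subdifferential is monotone, this gives convexity of $f(\cdot,\y)+\tfrac{\rho}{2}\|\cdot\|^2$, i.e.\ weak convexity in $\x$. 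Setting $\x'=\x$ symmetrically gives monotonicity of $\y\mapsto\partial_y[-f(\x,\y)]+\rho\,\y$ and hence weak concavity in $\y$.

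The forward direction is a clean, self-contained computation whose only real idea is choosing the cross evaluation points so that the cancellation occurs. I expect the main obstacle to be the converse, which is not purely algebraic: it rests on the nontrivial theorem that monotonicity of the subdifferential forces convexity of the underlying function. The work there is to confirm that the regularity under which $f$ is posed (proper, lower semicontinuous in each variable) matches the hypotheses of that characterization, and to cite it rather than reprove it.
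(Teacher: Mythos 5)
Your proposal is correct and follows essentially the same route as the paper's proof: the forward direction sums the four subgradient inequalities evaluated at the cross points $(\x,\y),(\x',\y),(\x,\y'),(\x',\y')$ so the function values cancel, and the converse restricts weak monotonicity to $\y'=\y$ (resp.\ $\x'=\x$) to get monotonicity of $\partial_x f(\cdot,\y)+\rho\,\x$ and hence convexity of $f(\cdot,\y)+\tfrac{\rho}{2}\|\cdot\|^2$. The only difference is that you are more explicit than the paper about the converse resting on the characterization that a monotone (Fr\'echet) subdifferential forces convexity, a step the paper's proof uses implicitly without citation.
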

The following lemma is standard but critical for obtaining our results. The counterpart of this lemma for minimization problems can be found in~\citep{davis2017proximally}.
\begin{lemma}\label{lem:PPM}
	If  $F(\bz):\Z\rightrightarrows \mathbb{R}^d$ is $\rho$-weakly monotone, the mapping $F_{\bw}^\gamma(\bz)\equiv F(\bz)+\frac{1}{\gamma}(\bz-\bw)$ is 
	 $(\frac{1}{\gamma}-\rho)$-strongly monotone on $\Z$ for any $0<\gamma<\rho^{-1}$ and any $\bw\in\Z$.
\end{lemma}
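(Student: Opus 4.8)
The plan is to prove Lemma~\ref{lem:PPM} directly from the definition of strong monotonicity by using the $\rho$-weak monotonicity of $F$ together with the observation that the added term $\frac{1}{\gamma}(\bz-\bw)$ is, up to the constant shift by $\bw$, a $\frac{1}{\gamma}$-strongly monotone linear mapping. The whole argument is a one-line estimate once the right quantities are expanded, so I would not introduce any auxiliary constructions.

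First I would fix arbitrary points $\bz,\bz'\in\Z$ and arbitrary subgradients $\bxi\in F(\bz)$ and $\bxi'\in F(\bz')$. The corresponding elements of $F_{\bw}^\gamma$ are $\bzt=\bxi+\frac{1}{\gamma}(\bz-\bw)$ and $\bzt'=\bxi'+\frac{1}{\gamma}(\bz'-\bw)$. The key computation is to expand the inner product that appears in the definition of monotonicity:
\begin{align}
\left\langle \bzt-\bzt', \bz-\bz'\right\rangle
&= \left\langle \bxi-\bxi', \bz-\bz'\right\rangle
+ \frac{1}{\gamma}\left\langle (\bz-\bw)-(\bz'-\bw), \bz-\bz'\right\rangle \nonumber\\
&= \left\langle \bxi-\bxi', \bz-\bz'\right\rangle
+ \frac{1}{\gamma}\|\bz-\bz'\|^2. \nonumber
\end{align}
Here the shift by $\bw$ cancels in the second term, which is exactly why the statement holds for any $\bw\in\Z$.

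Next I would invoke the $\rho$-weak monotonicity of $F$ from Definition~\ref{def:monotone}, namely $\left\langle \bxi-\bxi', \bz-\bz'\right\rangle\geq -\rho\|\bz-\bz'\|^2$, and substitute this lower bound into the displayed identity to obtain
\begin{align}
\left\langle \bzt-\bzt', \bz-\bz'\right\rangle
\geq \left(\frac{1}{\gamma}-\rho\right)\|\bz-\bz'\|^2. \nonumber
\end{align}
Since $0<\gamma<\rho^{-1}$ guarantees $\frac{1}{\gamma}-\rho>0$, this is precisely the defining inequality for $(\frac{1}{\gamma}-\rho)$-strong monotonicity of $F_{\bw}^\gamma$ on $\Z$, completing the proof.

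Because the estimate is exact and elementary, there is no real obstacle; the only point deserving care is the bookkeeping that the proximal center $\bw$ drops out of the monotonicity difference, so the strong-monotonicity constant depends only on $\gamma$ and $\rho$ and not on $\bw$. I would make sure to state at the outset that the argument holds for every choice of subgradients $\bxi,\bxi'$, so that the conclusion is about the set-valued mapping $F_{\bw}^\gamma$ and not merely a single-valued selection.
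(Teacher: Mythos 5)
Your proposal is correct and follows essentially the same route as the paper's proof: expand the inner product $\left\langle \bzt-\bzt', \bz-\bz'\right\rangle$, note that the shift by $\bw$ cancels to leave the extra term $\frac{1}{\gamma}\|\bz-\bz'\|^2$, and then apply the $\rho$-weak monotonicity of $F$ to conclude. Your explicit remark that the argument holds for every choice of subgradients (so the conclusion applies to the set-valued mapping) matches the closing sentence of the paper's proof.
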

 A point $\z\in\Z$ is called first-order stationary point of~(\ref{eqn:P}) if 
$\z\in \mathcal F_*$ with $\mathcal F_*$ defined in \eqref{eq:Fstar}.
An iterative algorithm typically only finds an $\epsilon$-stationary solution which is a solution $\z\in \Z$ such that
$\text{dist}(\mathbf{0}, \partial (f(\x, \y)+ 1_{\Z}(\x,\y))\leq \epsilon$, 
where $\partial (f(\x, \y)+ 1_{\Z}(\x,\y))\equiv \partial_x [f(\x, \y)+ 1_{\X}(\x)]\times \partial_y [-f(\x, \y) + 1_{\Y}(\y)]$ and 
$\text{dist}(\z, \mathcal{S})$ denotes the Euclidean distance from a point to a set $\mathcal{S}$. 
The non-smoothness nature of the problem makes it challenging to find an  $\epsilon$-stationary solution for an iterative algorithm.  For example, consider the convex minimization $\min_{z\in[-1,1]}|z|$ and has a solution at $0$. Hence,  if $\bar z$ is very close to $0$ but not $0$, we always have $|\bar\xi|=1$ for any $\bar\xi\in\partial|\bar z|$.  Hence,  we consider the following notion of a nearly stationary point for a non-smooth min-max problem. 
\begin{definition}
	A point $\w\in\Z$ is called a nearly $\epsilon$-stationary solution to~(\ref{eqn:P}) if there exists $\bar{\w}=(\bar{\mathbf{u}}, \bar{\mathbf{v}})^{\top}\in\Z$ and $c>0$ such that 
$\|\w -\bar{\w}\|\leq c\epsilon$, and $\text{dist}(0, \partial (f(\bar{\mathbf{u}}, \bar{\mathbf{v}})+ 1_{\Z}(\bar{\mathbf{u}}, \bar{\mathbf{v}}))]\leq \epsilon$.
\end{definition}
Such a notion  of nearly stationary  has been utilized in several works for tackling non-smooth non-convex minimization problems~\citep{sgdweakly18,davis2017proximally,chen18stagewise}. 
In order to show the existence of a nearly $\epsilon$-stationary solution, we define a \emph{proximal-point mapping} of $F$ at a proximal center $\w$ as 
\begin{eqnarray}
\label{eq:ppm}
F_{\bw}^\gamma(\bz)\equiv F(\bz)+\frac{1}{\gamma}(\bz-\bw)
\end{eqnarray} 
where $0<\gamma<\rho^{-1}$. According to Lemma~\ref{lem:PPM}, $F_{\bw}^\gamma$ is $(\frac{1}{\gamma}-\rho)$-strongly monotone so that $\text{SVI}(F_{\bw}^\gamma,\Z)$ has a unique solution denoted by $\bar\bw$.  	 The following lemma is a foundation of our algorithm and analysis, which shows that as long as we find a solution $\w$ such that $\|\w - \bar\w\|\leq \gamma\epsilon$, it is a nearly $\epsilon$-stationary solution. 
\begin{lemma}\label{lem:zbw2}
	Let  $F_{\bw}^\gamma$ be defined in \eqref{eq:ppm} for $0<\gamma<\rho^{-1}$ and $\bw=(\mathbf{u}, \mathbf{v})^{\top}\in\Z$. Denote by $\bar{\bw}=(\bar{\mathbf{u}}, \bar{\mathbf{v}})^{\top}$ the solution to $\text{SVI}(F_{\bw}^\gamma,\Z)$.  We have 
	\begin{eqnarray}
	\label{eq:zbw2}
	\text{dist}(0, \partial (f(\bar{\mathbf{u}}, \bar{\mathbf{v}})+ 1_{\Z}(\bar{\mathbf{u}}, \bar{\mathbf{v}}))\leq \|\bw-\bar{\bw}\|/\gamma.
	\end{eqnarray} 
\end{lemma}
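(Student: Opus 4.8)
The plan is to read off an explicit subgradient of $f + 1_{\Z}$ at $\bar{\bw}$ directly from the optimality condition defining $\bar{\bw}$, and to show that this subgradient equals $\frac{1}{\gamma}(\bw-\bar{\bw})$, whence its norm immediately yields the bound. First I would invoke the definition of $\bar{\bw}$ as the solution of $\text{SVI}(F_{\bw}^\gamma,\Z)$: there exists $\bar{\bxi}\in F_{\bw}^\gamma(\bar{\bw})$ with $\langle \bar{\bxi}, \bz-\bar{\bw}\rangle\geq 0$ for all $\bz\in\Z$. Writing $\bar{\bxi}=\bxi+\frac{1}{\gamma}(\bar{\bw}-\bw)$ with $\bxi\in F(\bar{\bw})$, and recalling $F(\bar{\bw})=(\partial_x f(\bar{\mathbf{u}},\bar{\mathbf{v}}),\ \partial_y[-f(\bar{\mathbf{u}},\bar{\mathbf{v}})])^{\top}$, I would fix components $\bxi_x\in\partial_x f(\bar{\mathbf{u}},\bar{\mathbf{v}})$ and $\bxi_y\in\partial_y[-f(\bar{\mathbf{u}},\bar{\mathbf{v}})]$ of $\bxi$.

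Next I would exploit the product structure $\Z=\X\times\Y$: since a feasible test point can vary its two blocks independently, the single variational inequality decouples into one inequality over $\X$ and one over $\Y$. The $\x$-block reads $\langle \bxi_x+\frac{1}{\gamma}(\bar{\mathbf{u}}-\mathbf{u}),\ \x-\bar{\mathbf{u}}\rangle\geq 0$ for all $\x\in\X$, which is exactly the statement that $\frac{1}{\gamma}(\mathbf{u}-\bar{\mathbf{u}})-\bxi_x$ lies in the normal cone of $\X$ at $\bar{\mathbf{u}}$, i.e.\ in $\partial 1_{\X}(\bar{\mathbf{u}})$; the $\y$-block similarly yields $\frac{1}{\gamma}(\mathbf{v}-\bar{\mathbf{v}})-\bxi_y\in\partial 1_{\Y}(\bar{\mathbf{v}})$.

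I would then combine these with the elementary sum rule for Fr\'echet subdifferentials. Because $1_{\X}$ is convex, if $\bxi_x\in\partial_x f(\bar{\mathbf{u}},\bar{\mathbf{v}})$ and $\vg\in\partial 1_{\X}(\bar{\mathbf{u}})$ then $\bxi_x+\vg\in\partial_x[f(\bar{\mathbf{u}},\bar{\mathbf{v}})+1_{\X}(\bar{\mathbf{u}})]$, this inclusion following by simply adding the local subgradient inequality for $f$ and the global one for the convex indicator. Taking $\vg=\frac{1}{\gamma}(\mathbf{u}-\bar{\mathbf{u}})-\bxi_x$ gives $\frac{1}{\gamma}(\mathbf{u}-\bar{\mathbf{u}})\in\partial_x[f(\bar{\mathbf{u}},\bar{\mathbf{v}})+1_{\X}(\bar{\mathbf{u}})]$, and the same argument on the $\y$-block gives $\frac{1}{\gamma}(\mathbf{v}-\bar{\mathbf{v}})\in\partial_y[-f(\bar{\mathbf{u}},\bar{\mathbf{v}})+1_{\Y}(\bar{\mathbf{v}})]$. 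Hence $\frac{1}{\gamma}(\bw-\bar{\bw})$ is an element of $\partial(f(\bar{\mathbf{u}},\bar{\mathbf{v}})+1_{\Z}(\bar{\mathbf{u}},\bar{\mathbf{v}}))$, and bounding the distance to $0$ by the norm of this particular element yields $\text{dist}(0,\partial(f(\bar{\mathbf{u}},\bar{\mathbf{v}})+1_{\Z}(\bar{\mathbf{u}},\bar{\mathbf{v}})))\leq\|\bw-\bar{\bw}\|/\gamma$.

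The only delicate points are the passage from the variational inequality to the normal-cone inclusion and the use of the sum rule, so I must confirm that the product structure of $\Z$ legitimately decouples the two blocks and that the additive inclusion $\partial_x f+\partial 1_{\X}\subseteq\partial_x(f+1_{\X})$ holds for the Fr\'echet subdifferential. Both are routine --- the former because $\X\times\Y$ permits each block to be tested separately, the latter because the indicator is a globally supported convex function --- so no genuine obstacle arises; the lemma is essentially a direct translation of the strongly-monotone SVI optimality condition into subdifferential language.
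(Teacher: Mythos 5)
Your proof is correct, and it reaches the same conclusion as the paper's by identifying the same explicit subgradient $\frac{1}{\gamma}(\bw-\bar{\bw})\in\partial(f(\bar{\mathbf{u}},\bar{\mathbf{v}})+1_{\Z}(\bar{\mathbf{u}},\bar{\mathbf{v}}))$, but the route is somewhat more elementary than the paper's. The paper first passes through the saddle-point characterization: it notes that, since $F_{\bw}^\gamma$ is strongly monotone, $\bar{\bw}$ is the saddle point of the convex-concave problem $\min_{\x\in\X}\max_{\y\in\Y} f(\x,\y)+\frac{1}{2\gamma}\|\x-\mathbf{u}\|^2-\frac{1}{2\gamma}\|\y-\mathbf{v}\|^2$ (citing the standard VI/saddle-point equivalence), writes the first-order conditions $\mathbf{0}\in\partial_x[f_{\bw}+1_{\X}]$ and $\mathbf{0}\in\partial_y[-f_{\bw}+1_{\Y}]$ for the regularized objective, and then strips off the quadratic terms to recover a subgradient of $f+1_{\Z}$ of norm $\|\bw-\bar{\bw}\|/\gamma$. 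You instead work directly from the SVI optimality condition, decouple the two blocks over the product set $\X\times\Y$, convert each block inequality into a normal-cone inclusion, and apply the one-directional sum rule $\partial_x f+\partial 1_{\X}\subseteq\partial_x(f+1_{\X})$, which indeed holds unconditionally for Fr\'echet subdifferentials. Your version buys self-containedness --- it avoids invoking the equivalence between strongly monotone SVI solutions and saddle points of the regularized convex-concave problem --- at the cost of having to verify the (routine) block decoupling and sum-rule steps explicitly; the paper's version is shorter on the page but leans on an external characterization. Both yield exactly the bound \eqref{eq:zbw2}.
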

Before ending this section, we formally present the basic assumptions used in our analysis. 
\begin{assumption}~\label{ass:3}
	(i) The set $\Z$ is convex and compact so that there exists $D>0$ such that $\max_{\bz,\bz'\in\Z}\|\bz-\bz'\|\leq D$. 
	(ii) The mapping $F$ is $\rho$-weakly monotone.
	(iii) The $\text{MVI}(F,\Z)$ problem has a solution. 
\end{assumption}
It is notable that the last assumption has been used by most previous works for solving non-monotone SVI~\citep{Solodov1999ANP,Wang2001,bao05pr,DBLP:journals/corr/corr1609,DBLP:journals/coap/DangL15,DBLP:journals/siamjo/IusemJOT17}. It can be shown that when $F(\z)$ satisfies some generalized notion of monotonicity (e.g., pseudomonotone, quasi-monotone),  a solution of $\text{MVI}(F ,\Z)$  exists. Indeed,  a similar assumption (for non-convex minimization) has been made  for analyzing the convergence of stochastic gradient descent for learning neural networks~\citep{NIPS2017_6662} and also was observed in practice~\citep{pmlr-v80-kleinberg18a}.

\section{A Generic Algorithmic Framework with a General Convergence Result}\label{sec:5}
In this section, we will present  a generic algorithmic framework for solving the  saddle-point problem~\eqref{eqn:P} through the lens of VI.  The method we propose is called the inexact proximal point (IPP) method. This method consists of solving a sequence of strongly monotone SVIs defined by the mappings $F_{\z_k}^\gamma$ in \eqref{eq:ppm} with a sequentially updated proximal center $\z_k$. In particular, an appropriate first-order algorithm is employed to find an approximate solution $\z_{k+1}$ to  $\text{SVI}(F_{\z_k}^\gamma,\Z)$, which is then used as the proximal center that defines $\text{SVI}(F_{\z_{k+1}}^\gamma,\Z)$. This method is described in Algorithm~\ref{alg:meta}. A subroutine $\text{ApproxSVI}(F_k, \Z, \z_k, \eta_k, T_k)$ is called to approximately solve  $\text{SVI}(F_k,\Z)$, where $\z_k$ is the initial solution of the subroutine,  $\eta_k$ denotes the step size and $T_k$ denotes the number of iterations performed in the subroutine. 
\setlength{\textfloatsep}{5pt}
\begin{algorithm}[t]
	\caption{Inexact Proximal Point  (IPP)  Method for Weakly-Monotone SVI}\label{alg:meta}
	\begin{algorithmic}[1]
		\STATE \textbf{Input:} integer $K\geq 1$, step size $\eta_k>0$, integer $T_k\geq 1$, weight $\theta_k>0$ non-decreasing in $k$, and $0<\gamma<\rho^{-1}$
		\FOR {$k = 0,\ldots, K-1$}
		\STATE Let $F_k\equiv F_{\bz_k}^\gamma=F(\z)+\gamma^{-1}(\z-\bz_k)$
		\STATE $\bz_{k+1}=\text{ApproxSVI}(F_k, \Z, \z_k, \eta_k , T_k)$
		\ENDFOR
		\STATE \textbf{Output:} $\z_\tau$, where $\tau$ is randomly from $\{0,1,\dots,K-1\}$ with $\text{Prob}(\tau=k) = \frac{\theta_k}{\sum_{l=0}^{K-1}\theta_l}.$
		
	\end{algorithmic}
\end{algorithm}	

The  strength  of our framework  is that it decomposes a complex non-convex non-concave minmax (or non-monotone SVI) problem into a sequence of easier strongly-convex strongly-concave  (or strongly monotone SVI) problems. Hence, it allows one to leverage algorithms and convergence theory for strongly-convex strongly-concave  (or strongly monotone SVI) problems to analyze the convergence for the original problem. Such approach is the first time used for solving non-convex non-concave minmax problems and our analysis is novel.  
A generic convergence result of Algorithm~\ref{alg:meta} conditioned on a particular  sequence of $\z_k$ generated by ApproxSVI is stated below. 

\begin{theorem}\label{thm:meta}
Suppose Assumption~\ref{ass:3} holds, and $\text{ApproxSVI}(F_k, \Z, \z_k, \eta_k , T_k)$ in Algorithm~\ref{alg:meta} ensures      
\begin{align}\label{eqn:sa_converge}
\max_{\z\in\Z}\E[\xi_{k+1}^{\top}(\z_{k+1} - \z)|\z_k]  \leq \frac{c}{k+1}, 
\end{align}
for $k=0,1,\dots,K-1$, where $c>0$, $\xi_{k+1}\in F_k(\z_{k+1})$ and $\E[\cdot|\z_k]$ is the conditional expectation conditioning on $\z_k$.
By choosing  $\gamma=\frac{1}{2\rho}$ and $\theta_k =(k+1)^\alpha$ with $\alpha\geq 1$ in Algorithm~\ref{alg:meta}, we have
\begin{align}\label{eqn:fc}
\E[ \|\z_\tau - \bar\z_\tau \|^2]\leq \frac{2D^2(\alpha+1)}{K}+ \frac{4c(\alpha+1)}{K\rho}.
\end{align}
where $\bar\z_\tau$ is the unique solution to $\text{SVI}(F^\gamma_{\z_\tau}, \Z)$. 
\end{theorem}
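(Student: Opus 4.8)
The plan is to track the inexact proximal iterates against a fixed Minty solution $\z_*$, which exists by Assumption~\ref{ass:3}(iii), using the potential $\|\z_k-\z_*\|^2$. For each $k$ let $\bar\z_k$ denote the \emph{exact} solution of $\text{SVI}(F_{\z_k}^\gamma,\Z)$, so that $\z_{k+1}$ is its inexact counterpart produced by \texttt{ApproxSVI}; with $\gamma=\tfrac{1}{2\rho}$, Lemma~\ref{lem:PPM} makes each $F_k=F_{\z_k}^\gamma$ exactly $\rho$-strongly monotone. Note that the quantity to be bounded, $\|\z_k-\bar\z_k\|^2$, is the squared proximal residual \emph{at the current center} (this is why the theorem's $\bar\z_\tau$ uses center $\z_\tau$, not $\z_{\tau-1}$), and by Lemma~\ref{lem:zbw2} controlling it yields near-stationarity.

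First I would quantify the inexactness in mean square. Since $\bar\z_k$ solves $\text{SVI}(F_k,\Z)$ there is $\bar\xi_k\in F_k(\bar\z_k)$ with $\langle\bar\xi_k,\z_{k+1}-\bar\z_k\rangle\ge 0$; combining this with $\rho$-strong monotonicity of $F_k$ gives $\langle\xi_{k+1},\z_{k+1}-\bar\z_k\rangle\ge\rho\|\z_{k+1}-\bar\z_k\|^2$ for the same element $\xi_{k+1}\in F_k(\z_{k+1})$ appearing in \eqref{eqn:sa_converge}. Taking $\E[\cdot\mid\z_k]$ and applying \eqref{eqn:sa_converge} at $\z=\bar\z_k$ (deterministic given $\z_k$) yields
\begin{align*}
\E\!\left[\|\z_{k+1}-\bar\z_k\|^2\,\middle|\,\z_k\right]\le \frac{c}{\rho(k+1)}.
\end{align*}

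Next I would extract a descent inequality for the potential. Writing $\zeta_{k+1}:=\xi_{k+1}-\gamma^{-1}(\z_{k+1}-\z_k)\in F(\z_{k+1})$ and invoking the Minty inequality at $\z_{k+1}$ gives $\langle\zeta_{k+1},\z_{k+1}-\z_*\rangle\ge0$, i.e.\ $\gamma^{-1}\langle\z_{k+1}-\z_k,\z_{k+1}-\z_*\rangle\le\langle\xi_{k+1},\z_{k+1}-\z_*\rangle$. Expanding the left side with the three-point identity $\langle\z_{k+1}-\z_k,\z_{k+1}-\z_*\rangle=\tfrac12(\|\z_{k+1}-\z_k\|^2+\|\z_{k+1}-\z_*\|^2-\|\z_k-\z_*\|^2)$, taking $\E[\cdot\mid\z_k]$, and using \eqref{eqn:sa_converge} at $\z=\z_*$ gives $\E[\|\z_{k+1}-\z_*\|^2\mid\z_k]-\|\z_k-\z_*\|^2\le \tfrac{2\gamma c}{k+1}-\E[\|\z_{k+1}-\z_k\|^2\mid\z_k]$. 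I would then reinstate the residual via $\|\z_k-\bar\z_k\|^2\le 2\|\z_{k+1}-\z_k\|^2+2\|\z_{k+1}-\bar\z_k\|^2$ together with the mean-square bound above, and set $\gamma=\tfrac1{2\rho}$, to reach
\begin{align*}
\tfrac12\,\E\|\z_k-\bar\z_k\|^2\le \E\|\z_k-\z_*\|^2-\E\|\z_{k+1}-\z_*\|^2+\frac{2c}{\rho(k+1)}.
\end{align*}

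Finally I would take the $\theta_k=(k+1)^\alpha$-weighted average. Multiplying the last display by $\theta_k$ and summing over $k$, the differences $\E\|\z_k-\z_*\|^2-\E\|\z_{k+1}-\z_*\|^2$ are handled by Abel summation: since $\theta_k$ is non-decreasing and $\E\|\z_k-\z_*\|^2\le D^2$ (diameter bound), the telescoped sum is at most $\theta_{K-1}D^2=K^\alpha D^2$; the error sum is $\tfrac{2c}{\rho}\sum_{k=0}^{K-1}(k+1)^{\alpha-1}\le \tfrac{2c}{\rho}K^\alpha$. Dividing by $\sum_{k=0}^{K-1}\theta_k\ge \tfrac{K^{\alpha+1}}{\alpha+1}$ and noting the weighted average on the left is $\tfrac12\E\|\z_\tau-\bar\z_\tau\|^2$ produces exactly the constants $\tfrac{2D^2(\alpha+1)}{K}+\tfrac{4c(\alpha+1)}{K\rho}$ of \eqref{eqn:fc}. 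The main obstacle is the bookkeeping forced by inexactness: the clean one-step contraction of the \emph{exact} proximal-point method must be recovered despite $\z_{k+1}\ne\bar\z_k$, which is why the residual is split off by the triangle inequality and the per-step error separately controlled through strong monotonicity; a secondary delicate point is matching the weighted Abel-summation constants so the final bound is exact rather than merely order-correct.
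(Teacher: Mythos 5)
Your proposal is correct and follows essentially the same route as the paper's proof: the same use of strong monotonicity of $F_k$ to bound $\E[\|\z_{k+1}-\bar\z_k\|^2\mid\z_k]\le c/(\rho(k+1))$, the same Minty-solution potential $\|\z_k-\z_*\|^2$ combined with the inexactness bound at $\z=\z_*$, the same splitting of $\|\z_k-\z_{k+1}\|^2$ to recover the residual $\|\z_k-\bar\z_k\|^2$ (your inequality $\|\z_k-\bar\z_k\|^2\le 2\|\z_{k+1}-\z_k\|^2+2\|\z_{k+1}-\bar\z_k\|^2$ is exactly the paper's Young's inequality with $a=1/2$), and the same weighted Abel summation yielding the stated constants.
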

{\bf Remark:} The total complexity of   Algorithm~\ref{alg:meta} for finding a nearly $\epsilon$-stationary solution depends on the complexity of ApproxSVI for computing each $\z_{k+1}$ satisfying~(\ref{eqn:sa_converge}), which corresponds  to solving $\text{SVI}(F_{\z_k}^\gamma,\Z)$ approximately with an increasing accuracy as $k$ increases. 

Below, we present several candidates for ApproxSVI with their convergence properties.  We first consider (stochastic) subgradient method without imposing Lipchitz continuity assumption of $F$. In next section, we derive improved rates when $F$ is Lipchitz continuous. With a specific algorithm $\mathcal A$ for solving $\text{SVI}(F_{\z_k}^\gamma,\Z)$ at each stage of Algorithm~\ref{alg:meta},  the resulting algorithm is named as IPP-$\mathcal A$.

Proposition~\ref{prop:alg1} summarizes the convergence result of  stochastic subgradient method for solving SVI$(F_k, \Z)$ approximately, and Corollary~\ref{thm:sgd} is a corollary of Theorem~\ref{thm:sgd} that states the convergence result of IPP-SG.    Here, we only show stochastic subgradient method in Algorithm~\ref{alg:SG}. 

\begin{proposition}\label{prop:alg1}
	Suppose $F_k$ is monotone, $\E[\bzeta(\z)]\in F_k(\z)$, and $\E\|\bzeta(\z)\|^2\leq G_k^2$ for any $\bz\in\Z$.   Algorithm~\ref{alg:SG} applied to SVI$(F_k, \Z)$ guarantees that 
	\begin{eqnarray}\label{eqn:ineq1}
	\max_{\z\in\Z}\E[(\bxi^{(\tau)})^{\top}(\z^{(\tau)}-\z)]\leq  \frac{D^2}{2\eta T} + \frac{\eta G_k^2}{2}, \text{	where $\bxi^{(\tau)}\in F_k(\z^{(\tau)})$. }
	\end{eqnarray}
	
\end{proposition}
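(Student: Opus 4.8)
The plan is to read Algorithm~\ref{alg:SG} as the projected stochastic subgradient recursion $\z^{(t+1)}=\text{Proj}_{\Z}[\z^{(t)}-\eta\bzeta(\z^{(t)})]$ run for $T$ steps on the monotone operator $F_k$, with output a uniformly random iterate $\z^{(\tau)}$, and to convert the standard one-step contraction into the ergodic gap bound \eqref{eqn:ineq1}. First I would fix an arbitrary comparison point $\z\in\Z$ and use non-expansiveness of the Euclidean projection onto the convex set $\Z$ to obtain, for each $t$, the deterministic one-step inequality
\begin{equation*}
\|\z^{(t+1)}-\z\|^2\leq \|\z^{(t)}-\z\|^2-2\eta\langle\bzeta(\z^{(t)}),\z^{(t)}-\z\rangle+\eta^2\|\bzeta(\z^{(t)})\|^2 .
\end{equation*}
This is the only place the update rule enters; everything afterward is oracle bookkeeping.

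Next I would take the conditional expectation given the history $\mathcal{F}_t$ up to $\z^{(t)}$. Writing $\bg^{(t)}\equiv\E[\bzeta(\z^{(t)})\mid\mathcal{F}_t]$, the unbiasedness assumption gives $\bg^{(t)}\in F_k(\z^{(t)})$ and the second-moment assumption gives $\E[\|\bzeta(\z^{(t)})\|^2\mid\mathcal{F}_t]\leq G_k^2$. Taking full expectation, rearranging, and telescoping the term $\E\|\z^{(t)}-\z\|^2-\E\|\z^{(t+1)}-\z\|^2$ over $t=1,\dots,T$, together with the diameter bound $\|\z^{(1)}-\z\|\leq D$ from Assumption~\ref{ass:3}(i), yields
\begin{equation*}
\frac{1}{T}\sum_{t=1}^{T}\E\big\langle\bg^{(t)},\z^{(t)}-\z\big\rangle\leq \frac{D^2}{2\eta T}+\frac{\eta G_k^2}{2}.
\end{equation*}
Since $\tau$ is uniform on $\{1,\dots,T\}$, the left-hand side is exactly $\E[\langle\bg^{(\tau)},\z^{(\tau)}-\z\rangle]$, and I would designate $\bxi^{(\tau)}:=\bg^{(\tau)}$ as the reported subgradient, which is legitimately an element of $F_k(\z^{(\tau)})$. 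Because the right-hand side does not depend on $\z$, taking the maximum over the compact set $\Z$ (the map $\z\mapsto\E[\langle\bxi^{(\tau)},\z^{(\tau)}-\z\rangle]$ is affine, hence the max is attained and still bounded by the same constant) gives \eqref{eqn:ineq1}.

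The step I expect to be the main obstacle — or at least the one requiring care — is the handling of the set-valued nature of $F_k$ in the last move. Monotonicity cannot relate two different subgradients at the \emph{same} point, so one cannot report an arbitrary element of $F_k(\z^{(\tau)})$; the bound is only guaranteed for the specific representative equal to the conditional mean $\bg^{(\tau)}=\E[\bzeta(\z^{(\tau)})\mid\mathcal{F}_\tau]$, whose membership in $F_k(\z^{(\tau)})$ is precisely what the hypothesis $\E[\bzeta(\z)]\in F_k(\z)$ supplies. I would also be careful that the maximum is placed \emph{outside} the expectation: the clean pull-out works only because the per-$\z$ bound is uniform and $\z$-independent, so the argument would not survive if the statement instead demanded $\E[\max_{\z}\langle\bxi^{(\tau)},\z^{(\tau)}-\z\rangle]$. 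With those two points pinned down, the remaining computations are routine, and the resulting estimate feeds directly into condition~\eqref{eqn:sa_converge} of Theorem~\ref{thm:meta} once $\eta$ and $T$ are tuned per stage.
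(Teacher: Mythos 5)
Your proposal is correct and follows essentially the same route as the paper: the non-expansiveness-based one-step inequality, telescoping over $t$, averaging via the uniformly random index $\tau$, and bounding the second moment by $G_k^2$. Your additional care in identifying $\bxi^{(\tau)}$ with the conditional mean $\E[\bzeta(\z^{(\tau)})\mid\mathcal{F}_\tau]$ and in justifying pulling the maximum over $\z$ outside the expectation makes explicit two points the paper's terse proof leaves implicit, but the argument is the same.
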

{\bf Remark:} 
We would like to emphasize that as long as an algorithm can find a $\z^{(\tau)}$ such that  $\max_{\z\in\Z}\E[(\bxi^{(\tau)})^{\top}(\z^{(\tau)}-\z)]\leq O(1/\sqrt{T})$ in $T$ iterations, it can be used as ApproxSVI and IPP will achieve the same total iteration 
as when the stochastic subgradient method is used. One example that is of particular interest to the deep learning community (e.g., for training GAN~\citep{DBLP:conf/nips/GulrajaniAADC17}) is the Adam-style stochastic algorithm~\citep{kingma2014adam,sashank2018adam}.  

\begin{algorithm}[t]
	\caption{Stochastic Subgradient Method for $\text{SVI}(F, \Z)$: SG$(F,\Z,\bz^{(0)},\eta,T)$}\label{alg:SG}
	\begin{algorithmic}[1]
		\STATE \textbf{Input:} mapping $F$, set $\Z$, $\bz^{(0)}\in\mathcal{Z}$, $\eta > 0$, $T\geq 1$ 
		\FOR{$t=0,..., T-1$}
		\STATE $\z^{(t+1)}= \text{Proj}_{\Z}\left(\z^{(t)} - \eta \bzeta(\z^{(t)})\right)$ where $\bzeta(\z^{(t)})$ satisfies $\E[\bzeta(\z^{(t)})]\in F(\z^{(t)})$.
		\ENDFOR
		\STATE \textbf{Output:} $\z^{(\tau)}$, where $\tau$ is uniformly randomly from $\{0,1,\dots,T-1\}$. 
		
	\end{algorithmic}
\end{algorithm}

\begin{corollary}\label{thm:sgd}
	For problem~(\ref{eqn:P}), assume that Assumption~\ref{ass:3} holds, and for any $\z\in\Z$ there exists $\bzeta(\z)\in F(\z)$ such that  $\E[\bzeta(\z)]\in F(\z)$, and $\E[\|\bzeta(\z)\|^2]\leq G^2$. Suppose Algorithm~\ref{alg:SG} is used as $\text{ApproxSVI}$ with $\eta_k= \frac{D}{G_k(k+1)}$ , $G_k= G + 2\rho D$ and $T_k=(k+1)^2$. By choosing  $\gamma=\frac{1}{2\rho}$, $\theta_k =(k+1)^\alpha$ with $\alpha\geq 1$, and  $K=\frac{(16\rho^2D^2+4\rho DG)(\alpha+1)}{\epsilon^2}$ in Alg.~\ref{alg:meta}, we have
	\begin{align*}
	&\E[ \|\z_\tau - \bar{\z}_\tau \|^2]\leq \gamma^2\epsilon^2,\quad \E[\text{dist}^2(0, \partial (f(\bar{\mathbf{u}}_\tau, \bar{\mathbf{v}}_\tau)+ 1_{\Z}(\bar{\mathbf{u}}_\tau, \bar{\mathbf{v}}_\tau))]\leq \epsilon^2
	\end{align*}
	where $\bar\z_\tau=(\bar{\mathbf{u}}_\tau, \bar{\mathbf{v}}_\tau)^{\top}$ is the solution to $\text{SVI}(F^\gamma_{\z_\tau}, \Z)$
	%
	The  total iteration complexity of $O\left(\frac{\max(\rho^6,\rho^3)}{\epsilon^6}\right)$.
\end{corollary}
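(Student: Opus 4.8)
The statement should follow by feeding the stochastic subgradient guarantee of Proposition~\ref{prop:alg1} into the generic bound of Theorem~\ref{thm:meta}, and then converting the resulting distance estimate into a stationarity estimate via Lemma~\ref{lem:zbw2}. The plan is therefore to verify the hypotheses of these three results in turn; the only genuine work is choosing the per-stage parameters $\eta_k$ and $T_k$ so that the accuracy of $\text{ApproxSVI}$ decays exactly like $c/(k+1)$ with a constant $c$ independent of $k$, which is precisely the form required by~\eqref{eqn:sa_converge}.

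First I would check that each subproblem is genuinely monotone, as Proposition~\ref{prop:alg1} demands. Since $F$ is $\rho$-weakly monotone and $\gamma=\tfrac{1}{2\rho}$, Lemma~\ref{lem:PPM} gives that $F_k=F_{\z_k}^\gamma$ is $(\tfrac1\gamma-\rho)=\rho$-strongly monotone, hence monotone. Next I would produce the second-moment bound needed by Proposition~\ref{prop:alg1}: a stochastic subgradient of $F_k$ at $\z$ is $\bzeta(\z)+\tfrac1\gamma(\z-\z_k)$, which is unbiased for an element of $F_k(\z)$, and by Minkowski's inequality together with the diameter bound $\|\z-\z_k\|\le D$ its $L^2$ norm is at most $G+\tfrac1\gamma D=G+2\rho D=:G_k$, a constant in $k$. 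Plugging $\eta_k=\tfrac{D}{G_k(k+1)}$ and $T_k=(k+1)^2$ into Proposition~\ref{prop:alg1} balances the two terms $\tfrac{D^2}{2\eta_k T_k}$ and $\tfrac{\eta_k G_k^2}{2}$, each of which equals $\tfrac{DG_k}{2(k+1)}$, so that conditionally on $\z_k$ the output $\z_{k+1}$ satisfies $\max_{\z\in\Z}\E[\bxi_{k+1}^\top(\z_{k+1}-\z)\mid\z_k]\le \tfrac{DG_k}{k+1}$. This is exactly condition~\eqref{eqn:sa_converge} with the constant $c=DG_k=DG+2\rho D^2$.

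With this value of $c$, Theorem~\ref{thm:meta} yields $\E[\|\z_\tau-\bar{\z}_\tau\|^2]\le \tfrac{2D^2(\alpha+1)}{K}+\tfrac{4c(\alpha+1)}{K\rho}$, and I would take $K$ of the stated order $\tfrac{(16\rho^2D^2+4\rho DG)(\alpha+1)}{\epsilon^2}$ so that this right-hand side is at most $\gamma^2\epsilon^2=\tfrac{\epsilon^2}{4\rho^2}$, giving the first claimed inequality. The second inequality is then immediate from Lemma~\ref{lem:zbw2}: applied with $\w=\z_\tau$ and $\bar{\w}=\bar{\z}_\tau$ it gives $\text{dist}(0,\partial(f(\bar{\mathbf v}_\tau,\bar{\mathbf v}_\tau)+1_\Z(\bar{\mathbf u}_\tau,\bar{\mathbf v}_\tau)))\le\|\z_\tau-\bar{\z}_\tau\|/\gamma$, so squaring and taking expectations turns $\E[\|\z_\tau-\bar{\z}_\tau\|^2]\le\gamma^2\epsilon^2$ into $\E[\text{dist}^2(\cdots)]\le\epsilon^2$. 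Finally, the total number of stochastic subgradient steps is $\sum_{k=0}^{K-1}T_k=\sum_{k=0}^{K-1}(k+1)^2=\Theta(K^3)$, and since $K=O(\max(\rho^2,\rho)/\epsilon^2)$ this is $O(\max(\rho^6,\rho^3)/\epsilon^6)$.

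I expect the main obstacle to be bookkeeping rather than any deep difficulty. One must be careful that Proposition~\ref{prop:alg1} is invoked \emph{conditionally} on $\z_k$, since both the subproblem mapping $F_k$ and the warm start of the inner routine depend on the random $\z_k$, and that the derived constant $c=DG_k$ is genuinely independent of $k$, because Theorem~\ref{thm:meta} requires precisely the $c/(k+1)$ form. The moment bound for the augmented mapping, where compactness of $\Z$ (Assumption~\ref{ass:3}(i)) is essential, is the one place where an inequality could be mishandled; the exact numerical constant in $K$ is then a routine, and slightly slack, calculation.
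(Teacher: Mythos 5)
Your proposal is correct and follows exactly the route the paper intends for this corollary (which it leaves implicit): Proposition~\ref{prop:alg1}, invoked conditionally on $\z_k$ with the augmented mapping's second moment bounded by $G_k=G+2\rho D$ via the diameter, yields condition~\eqref{eqn:sa_converge} with $c=DG_k$; Theorem~\ref{thm:meta} then gives the distance bound, and Lemma~\ref{lem:zbw2} converts it to near-stationarity, with $\sum_{k<K}(k+1)^2=\Theta(K^3)$ giving the complexity. The only discrepancy is the one you already flagged as slack bookkeeping: with $c=DG+2\rho D^2$ the right-hand side of~\eqref{eqn:fc} is $(10D^2+4DG/\rho)(\alpha+1)/K$, so the stated $K$ is short by a small constant factor (one needs roughly $K=(40\rho^2D^2+16\rho DG)(\alpha+1)/\epsilon^2$), which affects neither the argument nor the $O(\max(\rho^6,\rho^3)/\epsilon^6)$ rate.
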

{\bf Remark: } 
To the best of our knowledge, this is {\bf the first non-asymptotic convergence} of stochastic subgradient algorithms for solving non-convex non-concave  problems.

\section{Improved Rates for Lipchitz Continuous Operator}
In this section, we present improved rates when $F$ is single-valued and Lipchitz continuous. In particular, we consider gradient method, extragradient method and variance reduction methods for finite-sum problems. The key to the improved rates is that these methods could achieve faster convergence rates for strongly monotone problems $\text{SVI}(F_k, \Z)$, which have been analyzed in the literature~\citep{1078-0947Nesterov,hassan18nonconvexmm,DBLP:conf/nips/PalaniappanB16,sim18ganvi}. However, there is still a gap between the existing convergence results for strongly monotone problems and the requirement~(\ref{eqn:sa_converge}) in Theorem~\ref{alg:meta} because the existing faster convergence  for strongly monotone problems are for the distance to the optimal solution. The following lemma can bridge the gap that allows us to use existing faster convergence results for Lipchitz continuous operator. 
\begin{lemma}\label{lem:prom}
	Suppose that there exists an algorithm for a monotone $\text{SVI}(F, \Z)$ with $L$-Lipschitz continuous single-valued mapping $ F(\z)$ that returns a solution $\zh$. Let  $\w_*$ be the solution of $\text{SVI}(F, \Z)$.
	Then by constructing $\bar\z = \text{Proj}_{\Z}(\zh - \eta F(\zh))$ with $\eta= 1/(\sqrt{2}L)$, we have
	$\max_{\z\in\Z}F(\bar\z)^{\top}(\bar\z - \z)\leq DL(2+ \sqrt{2})\|\zh - \w_*\|$. 
\end{lemma}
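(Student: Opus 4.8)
The plan is to turn the distance guarantee $\|\zh-\w_*\|$ (which the fast strongly-monotone solvers deliver) into a bound on the Stampacchia gap $\max_{\z\in\Z}\langle F(\bar\z),\bar\z-\z\rangle$ by analyzing the single projected-gradient step that produces $\bar\z$. Two ingredients drive everything: the variational characterization of the Euclidean projection, and the fixed-point identity $\w_*=\text{Proj}_\Z(\w_*-\eta F(\w_*))$, which holds precisely because $\w_*$ solves $\text{SVI}(F,\Z)$.

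First I would bound the gap in terms of the step length $\|\bar\z-\zh\|$. Since $\bar\z=\text{Proj}_\Z(\zh-\eta F(\zh))$, the defining projection inequality gives $\langle \zh-\eta F(\zh)-\bar\z,\ \z-\bar\z\rangle\le 0$ for every $\z\in\Z$, equivalently $\langle F(\zh),\bar\z-\z\rangle\le \tfrac1\eta\langle \zh-\bar\z,\bar\z-\z\rangle$. Writing $F(\bar\z)=F(\zh)+(F(\bar\z)-F(\zh))$ and using $L$-Lipschitzness of $F$, Cauchy–Schwarz, and the diameter bound $\|\bar\z-\z\|\le D$, I obtain
$$\langle F(\bar\z),\bar\z-\z\rangle\le \Big(L+\tfrac1\eta\Big)\,\|\bar\z-\zh\|\,D\qquad\forall\,\z\in\Z,$$
so it remains to control the one-step movement $\|\bar\z-\zh\|$.

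Second I would show $\|\bar\z-\zh\|$ is proportional to $\|\zh-\w_*\|$. Using $\w_*=\text{Proj}_\Z(\w_*-\eta F(\w_*))$ and nonexpansiveness of the projection, $\|\bar\z-\w_*\|\le\|(\zh-\w_*)-\eta(F(\zh)-F(\w_*))\|$; expanding the square, discarding the cross term by monotonicity of $F$ (assumed in the lemma), and using $\|F(\zh)-F(\w_*)\|\le L\|\zh-\w_*\|$ yields $\|\bar\z-\w_*\|\le\sqrt{1+\eta^2L^2}\,\|\zh-\w_*\|$. Combining with a triangle inequality — or, to sharpen the constant, by testing the projection inequality for $\bar\z$ at the point $\z=\w_*$ together with the variational inequality satisfied by $\w_*$, namely $\langle F(\w_*),\bar\z-\w_*\rangle\ge 0$ — produces a bound of the form $\|\bar\z-\zh\|\le c(\eta L)\,\|\zh-\w_*\|$.

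Finally I would substitute and tune $\eta$: plugging the movement bound into the gap bound gives $\max_{\z}\langle F(\bar\z),\bar\z-\z\rangle\le D\,(L+\tfrac1\eta)\,c(\eta L)\,\|\zh-\w_*\|$, and the choice $\eta=1/(\sqrt2 L)$ balances the factor $L+\tfrac1\eta=L(1+\sqrt2)$ against the movement constant, yielding the stated $DL(2+\sqrt2)$. I expect the second step to be the main obstacle: because $F$ is only monotone (not strongly monotone or co-coercive), the projected-gradient map is not a contraction, so $\|\bar\z-\zh\|$ cannot be controlled by a naive fixed-point argument. One must fold together the projection inequality, monotonicity, and Lipschitzness with care — both to eliminate the uncontrolled quantity $\|F(\w_*)\|$ that a crude estimate $\|\bar\z-\zh\|\le\eta\|F(\zh)\|$ would introduce, and to recover a dependence on $\|\zh-\w_*\|$ sharp enough to reach the constant $2+\sqrt2$.
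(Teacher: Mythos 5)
Your first step is exactly the paper's: the projection optimality condition for $\bar\z$ plus the splitting $F(\bar\z)=F(\zh)+(F(\bar\z)-F(\zh))$ gives $\max_{\z\in\Z}F(\bar\z)^{\top}(\bar\z-\z)\le D(L+\eta^{-1})\|\bar\z-\zh\|$, which is what the paper writes via the residual $\phi_\eta$. The divergence is in how you control $\|\bar\z-\zh\|$, and this is also where your constant falls short. The paper invokes Lemma~5 of \citet{DBLP:journals/coap/DangL15}, an extragradient-type estimate involving the auxiliary point $\y^{(t+1)}=\text{Proj}_{\Z}(\zh-\eta F(\bar\z))$, which yields $(1-L^2\eta^2)\|\zh-\bar\z\|^2\le\|\zh-\w_*\|^2$ and hence $\|\zh-\bar\z\|\le\sqrt{2}\,\|\zh-\w_*\|$ at $\eta=1/(\sqrt2 L)$; combined with $L+\eta^{-1}=L(1+\sqrt2)$ this gives exactly $DL(2+\sqrt2)$. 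Your two elementary alternatives are both correct but weaker: nonexpansiveness plus the triangle inequality gives $\|\bar\z-\zh\|\le(1+\sqrt{1+\eta^2L^2})\|\zh-\w_*\|=(1+\sqrt{3/2})\|\zh-\w_*\|$, and your sharpened version (testing the projection inequality at $\w_*$, using $\langle F(\w_*),\bar\z-\w_*\rangle\ge0$, monotonicity, and Lipschitzness) gives $\|\bar\z-\zh\|\le(1+\eta L)\|\zh-\w_*\|=(1+1/\sqrt2)\|\zh-\w_*\|$. The latter leads to a final constant of $(1+\sqrt2)(1+1/\sqrt2)=2+3\sqrt2/2\approx4.12$ rather than $2+\sqrt2\approx3.41$. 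You correctly identified this as the crux and were right to be suspicious: the naive fixed-point reasoning does not reach $\sqrt2$, and the missing ingredient is precisely the Dang--Lan lemma. Since the constant only enters downstream results as an absolute multiplicative factor, your argument proves the lemma with a slightly larger constant and all complexity corollaries survive unchanged; but as a proof of the statement as literally written, the factor $2+\sqrt2$ is not attained by your route.
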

\subsection{Using Gradient Descent/ Extragradient Method}
The gradient descent method is presented in Algorithm~\ref{alg:GD} and the extragradient method is presented in Algorithm~\ref{alg:EG} and their linear convergence for the gap $\max_{\z\in\Z}F(\bar \z)^{\top}(\bar\z-\z)$ is stated in the following proposition. It is worth mentioning that the last step $\bar\z = \text{Proj}_{\Z}(\w^{(T+1)} - F(\w^{(T+1)})/(\sqrt{2}L))$ in Algorithm~\ref{alg:EG} following Lemma~\ref{lem:prom} is important to prove the linear convergence of the gap $\max_{\z\in\Z}F(\bar \z)^{\top}(\bar\z-\z)$. 

\begin{proposition}\label{prop:gd}
	Suppose that $F_k$ is single-valued and $L_k$-Lipschitz continuous and  is $\mu$-strongly monotone. Algorithm~\ref{alg:GD} applied to SVI$(F_k, \Z)$ with $\eta = \frac{\mu}{2L_k^2}$ guarantees 
	\begin{align*}
	\max_{\z\in\Z}F_k(\z^{(T)})^{\top}(\z^{(T)}-\z)
	\leq   4D^2L_k\beta\exp(-\frac{T-1}{4\beta^2}), \text{ where $\beta= L_k/\mu\geq 1$}.
	\end{align*}
	Algorithm~\ref{alg:EG} applied to SVI$(F_k, \Z)$ with $\eta=1/(4L_k)$ guarantees that for any $\z\in\Z$
	\begin{eqnarray*}
		\max_{\z\in\Z}F_k(\bar \z)^{\top}(\bar\z-\z)\leq 4D^2L_k\exp(-T/(8\beta)), \text{	where $\beta = L_k/\mu\geq 1$.}
	\end{eqnarray*}
	
\end{proposition}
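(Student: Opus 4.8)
The plan is to prove both estimates in two stages: first I establish linear convergence of the iterates to the unique solution $\w_*$ of $\text{SVI}(F_k,\Z)$ measured in the distance $\|\cdot-\w_*\|$, and then I convert this into a bound on the gap $\max_{\z\in\Z}F_k(\bar\z)^\top(\bar\z-\z)$ via Lemma~\ref{lem:prom}. Because the returned point in each algorithm is exactly of the form $\bar\z=\text{Proj}_\Z(\zh-F_k(\zh)/(\sqrt2 L_k))$ prescribed by that lemma, it immediately gives $\max_{\z\in\Z}F_k(\bar\z)^\top(\bar\z-\z)\le DL_k(2+\sqrt2)\,\|\zh-\w_*\|$, where $\zh$ is the last iterate fed into the final projection; thus the whole problem reduces to controlling $\|\zh-\w_*\|$. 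Throughout I use Assumption~\ref{ass:3}(i) to bound the initial error $\|\w^{(0)}-\w_*\|\le D$, and the standard characterization that $\w_*$ solves the SVI if and only if $\w_*=\text{Proj}_\Z(\w_*-\eta F_k(\w_*))$ for every $\eta>0$.

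For the gradient method I would first derive a one-step contraction. Writing the update as a projected gradient step and using nonexpansiveness of the projection together with the fixed-point identity for $\w_*$, I get $\|\z^{(t+1)}-\w_*\|^2\le \|\z^{(t)}-\w_*\|^2-2\eta\langle F_k(\z^{(t)})-F_k(\w_*),\z^{(t)}-\w_*\rangle+\eta^2\|F_k(\z^{(t)})-F_k(\w_*)\|^2$. Bounding the inner product below by $\mu$-strong monotonicity and the last term above by $L_k$-Lipschitz continuity gives the contraction factor $1-2\eta\mu+\eta^2L_k^2$, which at $\eta=\mu/(2L_k^2)$ equals $1-\tfrac{3}{4\beta^2}\le\exp(-\tfrac{3}{4\beta^2})$. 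Iterating over the $T-1$ plain steps yields $\|\zh-\w_*\|\le\exp(-\tfrac{3(T-1)}{8\beta^2})D$, and feeding this into Lemma~\ref{lem:prom} (absorbing $2+\sqrt2\le4\le4\beta$ and $3/8\ge1/4$ into the deliberately loose constants) produces the claimed bound $4D^2L_k\beta\exp(-\tfrac{T-1}{4\beta^2})$.

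For the extragradient method the same final conversion applies, so the crux is the distance contraction $\|\w^{(t+1)}-\w_*\|^2\le(1-\tfrac{\mu}{4L_k})\|\w^{(t)}-\w_*\|^2$, whose per-step decay scales like $1/\beta$ rather than $1/\beta^2$; this is precisely the source of the improvement over gradient descent. To obtain it I would write the two obtuse-angle projection inequalities, one for the extrapolated point $\w^{(t+1/2)}=\text{Proj}_\Z(\w^{(t)}-\eta F_k(\w^{(t)}))$ and one for the update $\w^{(t+1)}=\text{Proj}_\Z(\w^{(t)}-\eta F_k(\w^{(t+1/2)}))$, evaluate both at $\z=\w_*$, and combine them. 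The leading favorable term $-2\eta\langle F_k(\w^{(t+1/2)}),\w^{(t+1/2)}-\w_*\rangle\le-2\eta\mu\|\w^{(t+1/2)}-\w_*\|^2$ comes from strong monotonicity (after using $\langle F_k(\w_*),\w^{(t+1/2)}-\w_*\rangle\ge0$), while the residual cross terms are controlled with $L_k$-Lipschitzness and Young's inequality; the choice $\eta=1/(4L_k)$ makes the $\eta^2L_k^2$-type error terms nonpositive, leaving the stated contraction. Iterating down to the point $\w^{(T+1)}$ and applying Lemma~\ref{lem:prom} then gives $\max_{\z\in\Z}F_k(\bar\z)^\top(\bar\z-\z)\le(2+\sqrt2)D^2L_k\exp(-\tfrac{T+1}{8\beta})\le4D^2L_k\exp(-\tfrac{T}{8\beta})$.

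The main obstacle I anticipate is the extragradient contraction. Unlike the gradient step, a single EG step is not a nonexpansive map of $\w^{(t)}$, so the two projection inequalities must be combined by hand, and the intermediate quantities $\|\w^{(t)}-\w^{(t+1/2)}\|^2$ and $\|\w^{(t+1/2)}-\w^{(t+1)}\|^2$ have to be arranged so that the Lipschitz-induced cross terms cancel against the negative squared-distance terms supplied by the projections. Verifying that $\eta=1/(4L_k)$ renders every error term nonpositive while preserving the $O(\mu/L_k)$ (rather than $O(\mu^2/L_k^2)$) rate is the delicate step; by contrast the gradient-descent contraction and the two invocations of Lemma~\ref{lem:prom} are routine.
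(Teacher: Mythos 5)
Your overall strategy---a per-step distance contraction to $\w_*$ followed by a Lemma~\ref{lem:prom}-style conversion of $\|\zh-\w_*\|$ into a bound on the gap---is exactly the paper's, and your extragradient argument matches the paper's proof (which packages your two obtuse-angle projection inequalities as Lemma~3.1 of Nemirovski, then uses strong monotonicity at the extrapolated point and $\eta=1/(4L_k)$ to get the $(1-\mu/(4L_k))$ contraction, and finally invokes Lemma~\ref{lem:prom} on the explicit last projection step of Algorithm~\ref{alg:EG}).

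There is, however, one inaccuracy in your GD part. You assert that the returned point of \emph{each} algorithm has the form $\bar\z=\text{Proj}_{\Z}(\zh-F_k(\zh)/(\sqrt2 L_k))$ required by Lemma~\ref{lem:prom}. That is true for Algorithm~\ref{alg:EG} but false for Algorithm~\ref{alg:GD}, which outputs $\z^{(T)}=\text{Proj}_{\Z}(\z^{(T-1)}-\eta F_k(\z^{(T-1)}))$ with $\eta=\mu/(2L_k^2)$, not $1/(\sqrt2 L_k)$. Lemma~\ref{lem:prom} as stated therefore does not apply verbatim, and the constant $DL_k(2+\sqrt2)$ you quote is not available. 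The paper instead re-runs the same two ingredients with the GD step size: the optimality condition of the projection gives $\max_{\z\in\Z}F_k(\z^{(T)})^{\top}(\z^{(T)}-\z)\le D(L_k+\eta^{-1})\|\z^{(T-1)}-\z^{(T)}\|$ with $L_k+\eta^{-1}=L_k(1+2\beta)$, and Lemma~5 of \citet{DBLP:journals/coap/DangL15} gives $\|\z^{(T-1)}-\z^{(T)}\|\le(1-1/(4\beta^2))^{-1/2}\|\z^{(T-1)}-\w_*\|$. This is where the factor $\beta$ in the bound $4D^2L_k\beta\exp(-(T-1)/(4\beta^2))$ genuinely originates; in your write-up it appears only because you ``absorb'' $2+\sqrt2\le4\beta$, which papers over the missing step rather than supplying it. The fix is routine (your contraction $\|\z^{(T-1)}-\w_*\|\le\exp(-3(T-1)/(8\beta^2))D$ is correct and $3/8\ge1/4$ covers the stated exponent), but as written the GD conversion step rests on a false premise about Algorithm~\ref{alg:GD}'s output.
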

The following two corollaries summarize the convergence results of IPP-GD and IPP-EG, respectively.

\begin{corollary}\label{thm:gd}
	For problem~(\ref{eqn:P}), assume that Assumption~\ref{ass:3} holds and  $F$ is single-valued and $L$-Lipschitz continuous.  Suppose Algorithm~\ref{alg:GD} is used as $\text{ApproxSVI}$ with $\eta_k= \frac{\rho}{2(L+2\rho)^2},  T_k=1+\frac{4(L+2\rho)^2}{\rho^2}\log(\frac{8(L+2\rho)^{2}(k+1)}{\rho^2})$. By choosing  $\gamma=1/(2\rho)$, $\theta_k =(k+1)^\alpha$ with $\alpha\geq 1$, and  $K= \frac{16\rho^2 D^2(\alpha+1)}{\epsilon^2}$ in Algorithm~\ref{alg:meta}, we have
	\begin{align*}
	&\E[ \|\z_\tau - \bar\z_\tau \|^2]\leq \gamma^2\epsilon^2, \quad \E[\text{dist}^2(0, \partial (f(\bar\u_\tau, \bar\v_\tau)+ 1_{\Z}(\bar\u_\tau, \bar\v_\tau))]\leq \epsilon^2
	\end{align*}
	where $\bar\z_\tau=(\bar\u_\tau, \bar\v_\tau)^{\top}$ is the solution to $\text{SVI}(F^\gamma_{\z_\tau}, \Z)$. 
	The total iteration complexity of $O(\log(\frac{1}{\epsilon})\frac{L^2}{\epsilon^2})$.
\end{corollary}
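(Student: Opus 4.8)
The plan is to read Corollary~\ref{thm:gd} as a direct instantiation of the generic Theorem~\ref{thm:meta}: gradient descent run for $T_k$ steps on each subproblem must meet the accuracy requirement~\eqref{eqn:sa_converge}, after which the final bounds follow by substitution. First I would pin down the parameters of each strongly monotone subproblem $\text{SVI}(F_k,\Z)$, where $F_k=F+\gamma^{-1}(\z-\z_k)$ with $\gamma=1/(2\rho)$. Since $F$ is $L$-Lipschitz and the added term contributes Lipschitz constant $\gamma^{-1}=2\rho$, the mapping $F_k$ is $L_k$-Lipschitz with $L_k=L+2\rho$; by Lemma~\ref{lem:PPM} it is $\mu$-strongly monotone with $\mu=\gamma^{-1}-\rho=\rho$, so $\beta=L_k/\mu=(L+2\rho)/\rho$.

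Next I would invoke the gradient-descent half of Proposition~\ref{prop:gd}. With the prescribed $\eta_k=\mu/(2L_k^2)=\rho/(2(L+2\rho)^2)$, the output $\z_{k+1}=\z^{(T_k)}$ obeys $\max_{\z\in\Z}F_k(\z_{k+1})^{\top}(\z_{k+1}-\z)\le 4D^2L_k\beta\exp(-(T_k-1)/(4\beta^2))$. The point of the seemingly arbitrary $T_k=1+\frac{4(L+2\rho)^2}{\rho^2}\log\!\big(\frac{8(L+2\rho)^2(k+1)}{\rho^2}\big)$ is that $4\beta^2=4(L+2\rho)^2/\rho^2$, so $(T_k-1)/(4\beta^2)=\log\!\big(\frac{8(L+2\rho)^2(k+1)}{\rho^2}\big)$ and the exponential collapses to $\rho^2/(8(L+2\rho)^2(k+1))$. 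Using $L_k\beta=(L+2\rho)^2/\rho$, the Lipschitz factors cancel and the gap is bounded by $D^2\rho/(2(k+1))$. Because gradient descent is deterministic, the conditional expectation in~\eqref{eqn:sa_converge} is trivial and the requirement holds with $\xi_{k+1}=F_k(\z_{k+1})$ (single-valued) and $c=D^2\rho/2$; note the final projection step of Lemma~\ref{lem:prom} is not needed here, since Proposition~\ref{prop:gd} already states the gap for the iterate $\z^{(T)}$ itself.

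Then I would feed $c=D^2\rho/2$ into Theorem~\ref{thm:meta}. The two terms of~\eqref{eqn:fc} become $2D^2(\alpha+1)/K$ and $4c(\alpha+1)/(K\rho)=2D^2(\alpha+1)/K$, summing to $4D^2(\alpha+1)/K$, and the choice $K=16\rho^2D^2(\alpha+1)/\epsilon^2$ forces $\E[\|\z_\tau-\bar\z_\tau\|^2]\le \epsilon^2/(4\rho^2)=\gamma^2\epsilon^2$, the first claim. Applying Lemma~\ref{lem:zbw2} pointwise gives $\text{dist}(0,\partial(f+1_{\Z})(\bar\u_\tau,\bar\v_\tau))\le\|\z_\tau-\bar\z_\tau\|/\gamma$; squaring and taking expectation yields $\E[\text{dist}^2]\le\gamma^2\epsilon^2/\gamma^2=\epsilon^2$, the second claim. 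For the complexity I would sum $\sum_{k=0}^{K-1}T_k$: each $T_k=O\!\big(\frac{(L+2\rho)^2}{\rho^2}\log\frac{(L+2\rho)^2(k+1)}{\rho^2}\big)$, so with $\sum_{k=0}^{K-1}\log(k+1)=\log(K!)=O(K\log K)$ the total is $O\!\big(\frac{(L+2\rho)^2}{\rho^2}K\log K\big)$; plugging $K=\Theta(\rho^2D^2/\epsilon^2)$ and absorbing $\rho,D$ into constants gives $\widetilde O(L^2/\epsilon^2)$, matching the stated $O(\log(1/\epsilon)L^2/\epsilon^2)$.

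The main obstacle is the bookkeeping in the second step: the entire purpose of the specific constants in $\eta_k$ and $T_k$ is to make the per-stage gap decay exactly like $1/(k+1)$ with a constant $c$ proportional to $\rho$, so that Theorem~\ref{thm:meta}'s two error terms balance and the target accuracy comes out clean. I would therefore verify the cancellation $L_k\beta\cdot\exp(-(T_k-1)/(4\beta^2))=\Theta(\rho/(k+1))$ with the precise constants rather than merely up to order; everything else is mechanical substitution into the already-proved Proposition~\ref{prop:gd}, Theorem~\ref{thm:meta}, and Lemma~\ref{lem:zbw2}.
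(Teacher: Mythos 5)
Your proposal is correct and is exactly the route the paper intends: instantiate Proposition~\ref{prop:gd} (GD part) with $L_k=L+2\rho$, $\mu=\rho$, verify that the chosen $T_k$ makes the per-stage gap equal to $\frac{D^2\rho}{2(k+1)}$ so that Theorem~\ref{thm:meta} applies with $c=D^2\rho/2$, and finish with Lemma~\ref{lem:zbw2}; all of your constant computations check out, including the observation that no extra projection step is needed since the GD bound is stated for $\z^{(T)}$ itself.
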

\begin{corollary}\label{thm:eg}
	Under the same assumption in Corollary~\ref{thm:gd},  Algorithm~\ref{alg:meta} with $\gamma=1/(2\rho)$, $\theta_k =(k+1)^\alpha$ with $\alpha>1$,  $ T_k= \frac{8(L+2\rho)}{\rho}\log(\frac{4(k+1)(L+2\rho)}{\rho})$, and $K=24\rho^2 D^2(\alpha+1)/\epsilon^2$  guarantees
	\begin{align}\label{eqn:fc3}
	\E[ \|\z_\tau - \bar\z_\tau \|^2]\leq \gamma^2\epsilon^2,\quad 	E[\text{dist}^2(0, \partial (f(\bar\u_\tau, \bar\v_\tau)+ 1_{\Z}(\bar\u_\tau, \bar\v_\tau))]\leq \epsilon^2
	\end{align}
	where $\bar\z_\tau$ is the solution to $\text{SVI}(F^\gamma_{\z_\tau}, \Z)$. The total iteration complexity is $O(\log(\frac{1}{\epsilon})\frac{L\rho}{\epsilon^2})$.
\end{corollary}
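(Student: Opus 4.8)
The plan is to mirror the argument for IPP-GD in Corollary~\ref{thm:gd}, substituting the extragradient rate for the gradient-descent rate and tracking constants. First I would pin down the parameters of each subproblem $\text{SVI}(F_k,\Z)$. Since $\gamma=1/(2\rho)$, Lemma~\ref{lem:PPM} gives that $F_k(\z)=F(\z)+2\rho(\z-\z_k)$ is $\mu=(1/\gamma-\rho)=\rho$-strongly monotone; and because $F$ is single-valued and $L$-Lipschitz while the added term $2\rho(\z-\z_k)$ is affine, $F_k$ is single-valued and $L_k=(L+2\rho)$-Lipschitz. Hence the condition number appearing in Proposition~\ref{prop:gd} is $\beta=L_k/\mu=(L+2\rho)/\rho$, so $8\beta=8(L+2\rho)/\rho$.

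Next I would verify that the prescribed $T_k$ drives the per-stage gap down at rate $c/(k+1)$. Taking $\z_{k+1}=\bar\z$ to be the output of the extragradient routine (which already incorporates the final projection step of Lemma~\ref{lem:prom}), Proposition~\ref{prop:gd} yields $\max_{\z\in\Z}F_k(\z_{k+1})^\top(\z_{k+1}-\z)\leq 4D^2(L+2\rho)\exp(-T_k/(8\beta))$. With $T_k=\tfrac{8(L+2\rho)}{\rho}\log\!\big(\tfrac{4(k+1)(L+2\rho)}{\rho}\big)=8\beta\log\!\big(\tfrac{4(k+1)(L+2\rho)}{\rho}\big)$ the exponential equals $\rho/(4(k+1)(L+2\rho))$, so the gap is at most $D^2\rho/(k+1)$. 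Because the extragradient method is deterministic and $F_k$ is single-valued (so $\xi_{k+1}=F_k(\z_{k+1})$), the conditional expectation in~\eqref{eqn:sa_converge} is vacuous, and~\eqref{eqn:sa_converge} holds with $c=D^2\rho$.

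I would then invoke Theorem~\ref{thm:meta} with this $c$ and $\gamma=1/(2\rho)$. The bound~\eqref{eqn:fc} simplifies to $\tfrac{2D^2(\alpha+1)}{K}+\tfrac{4D^2\rho(\alpha+1)}{K\rho}=\tfrac{6D^2(\alpha+1)}{K}$. Requiring this to be at most $\gamma^2\epsilon^2=\epsilon^2/(4\rho^2)$ yields exactly $K=24\rho^2D^2(\alpha+1)/\epsilon^2$, establishing $\E[\|\z_\tau-\bar\z_\tau\|^2]\leq\gamma^2\epsilon^2$. Applying Lemma~\ref{lem:zbw2} at the realized $\z_\tau$, squaring, and taking expectations then gives $\E[\text{dist}^2(0,\partial(f(\bar\u_\tau,\bar\v_\tau)+1_\Z(\bar\u_\tau,\bar\v_\tau)))]\leq\E[\|\z_\tau-\bar\z_\tau\|^2]/\gamma^2\leq\epsilon^2$, the second claimed bound.

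Finally, the total iteration count is $\sum_{k=0}^{K-1}T_k$. Since $T_k$ is increasing and each logarithmic factor is dominated by $\log(4K(L+2\rho)/\rho)=O(\log(1/\epsilon))$, the sum is $O\big(\tfrac{L+2\rho}{\rho}\cdot K\cdot\log(1/\epsilon)\big)=O\big(\tfrac{(L+2\rho)\rho D^2}{\epsilon^2}\log\tfrac1\epsilon\big)=O(\log(\tfrac1\epsilon)L\rho/\epsilon^2)$, treating $D$ and $\alpha$ as constants. I do not expect a genuine obstacle: the only care needed is bookkeeping the constants so that the chosen $T_k$ produces precisely $c/(k+1)$, and observing that the deterministic extragradient gap bound of Proposition~\ref{prop:gd} slots directly into the expectation-form requirement~\eqref{eqn:sa_converge}. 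The single gain over IPP-GD is that the extragradient rate scales with $\beta$ rather than $\beta^2$, which is exactly what shaves the complexity from $\widetilde O(L^2/\epsilon^2)$ down to $\widetilde O(L\rho/\epsilon^2)$.
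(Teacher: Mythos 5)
Your proposal is correct and follows exactly the route the paper intends: instantiate Proposition~\ref{prop:gd} (extragradient part) for the $\rho$-strongly monotone, $(L+2\rho)$-Lipschitz subproblem mappings $F_k$ to verify condition~\eqref{eqn:sa_converge} with $c=D^2\rho$, then plug into Theorem~\ref{thm:meta} and convert via Lemma~\ref{lem:zbw2}. The constant tracking ($6D^2(\alpha+1)/K\leq\gamma^2\epsilon^2$ giving $K=24\rho^2D^2(\alpha+1)/\epsilon^2$) and the final complexity count both check out.
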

{\bf Remark: }  The improvement of using the extragradient method over the gradient descent method lies at the better dependence on the condition number $L/\rho\geq 1$. In particular, IPP-GD has a complexity of $\widetilde O((L/\rho)^2\rho^2/\epsilon^2)$ and IPP-EG has a complexity of $\widetilde O((L/\rho)\rho^2/\epsilon^2)$.

\begin{figure*}
	\hspace*{-0.12in} \begin{minipage}[t]{0.43\textwidth}
		\begin{algorithm}[H]
			\caption{GD$(F,\Z,\bz^{(0)},\eta,T)$}\label{alg:GD}
			\begin{algorithmic}[1]
				\FOR{$t=0,..., T-1$}
				\STATE $\z^{(t+1)}= \text{Proj}_{\Z}\left(\z^{(t)} - \eta  F(\z^{(t)})\right)$ 
				\ENDFOR
				\STATE \textbf{Output:} $\z^{(T)}$
			\end{algorithmic}
		\end{algorithm}
	\end{minipage}
	\hspace*{0.02in} \begin{minipage}[t]{0.6\textwidth}
		\begin{algorithm}[H]
			\caption{
				EG$(F,\Z,\bw^{(0)},\eta,T)$}\label{alg:EG}
			\begin{algorithmic}[1]
				\FOR{$t=0,..., T$}
				\STATE $\z^{(t)}= \text{Proj}_{\Z}\left(\w^{(t)} - \eta  F(\w^{(t)})\right)$ 
				\STATE $\w^{(t+1)}= \text{Proj}_{\Z}\left( \w^{(t)} - \eta  F(\z^{(t)})\right)$ 
				\ENDFOR
				\STATE \textbf{Output:} {\small $ \bar\z = \text{Proj}_{\Z}(\w^{(T+1)} -  F(\w^{(T+1)})/(\sqrt{2}L))$}
			\end{algorithmic}
		\end{algorithm}
	\end{minipage}
\end{figure*}
\subsection{Using Variance Reduction methods}

Next, we discuss how to use  variance reduction methods for solving each strongly monotone SVI when the underlying $F(\z)$ is $\mu$-strongly monotone, has a finite-sum form 
$$
F(\z) = \frac{1}{n}\sum_{i=1}^n F_i(\z),
$$ 
where each $F_i$ is $L$-Lipschitz continuous.
Several studies have considered variance reduction algorithms for solving strongly monotone SVI or the strongly convex and strongly concave min-max problems~\citep{DBLP:conf/nips/PalaniappanB16,hassan18nonconvexmm}. It is notable that in these studies, linear convergence is only proved for the point convergence, i.e., $\E[\|\zh - \w_*\|]$ where $\w_*$ denotes the solution of the strongly monotone SVI. However, by using an additional gradient update as in Lemma~\ref{lem:prom}, we can get the linear convergence for  $\E[\max_{\z\in\Z}F_k(\bar \z)^{\top}(\bar\z-\z)]$.  Combining this result, existing convergence results of variance reduction algorithms for solving each strongly monotone $\text{SVI}(F_k, \Z)$ and the result in Theorem~\ref{thm:meta}, we can derive the complexity for solving the original  $\text{SVI}(F, \Z)$ or the min-max saddle-point problem. According to~\citep{DBLP:conf/nips/PalaniappanB16}, the complexity of SVRG-based algorithm for solving  a strongly convex and strongly concave min-max smooth problem in the sense that $\E[ \|\zh - \w_*\|]\leq \epsilon$ is $O((n + L^2/\mu^2)\log(1/\epsilon))$, which gives a total gradient  complexity of $\widetilde O((n + L^2/\rho^2)/\epsilon^2)$ for finding a solution $\z_\tau$ that satisfies $\E[\|\z_\tau-\bar\z_\tau\|]\leq\epsilon^2$  where $\bar\z_\tau=(\bar\u_\tau, \bar\v_\tau)^{\top}$ is the solution to $\text{SVI}(F^\gamma_{\z_\tau}, \Z)$. Equivalently, for finding a nearly $\epsilon$-stationary solution of the original problem, the total gradient complexity is $\widetilde O((n\rho^2 + L^2)/\epsilon^2)$. 

Now we present the details about the approach. Following the setting in \citep{DBLP:conf/nips/PalaniappanB16}, we first equivalently view the SVI problem as the problem of finding a $\bz$ such that
$$
\mathbf{0}\in F(\z)+\partial\mathbf{1}_{\mathcal{Z}}(\bz),
$$  
where $\partial\mathbf{1}_{\mathcal{Z}}(\bz)=N_{\mathcal{Z}}(\z)$ is the normal cone of $\mathcal{Z}$ at $\bz$.
Note that  $F(\z)+\partial\mathbf{1}_{\mathcal{Z}}(\bz)$ is still a strongly monotone mapping. Furthermore, we reformulate $F(\z)+\partial\mathbf{1}_{\mathcal{Z}}(\bz)$  as 
$$
F(\z)+\partial\mathbf{1}_{\mathcal{Z}}(\bz)=A(\z)+B(\bz)=A(\z)+\sum_{i=1}^n B_i(\z),
$$
where $A(\bz):=\mu(\bz-\bw)+\partial\mathbf{1}_{\mathcal{Z}}(\bz)$ is $\mu$-strongly monotone, $B(\bz):=F(\bz)-\mu(\bz-\bw)$ is monotone and $(L+\mu)$-Lipschitz-continuous,
and $B_i(\bz):=\frac{1}{n}F_i(\bz)-\frac{\mu}{n}(\bz-\bw)$ is $(L+\mu)/n$-Lipschitz-continuous for $i=1,\dots,n$. Based on this structure, we present the SVRG algorithm for SVI by \cite{DBLP:conf/nips/PalaniappanB16} in Algorithm~\ref{alg:SVRG}. Note that Algorithm~\ref{alg:SVRG} utilizes the resolvent operator of the mapping $A$ defined above, which is the mapping $(I+\eta A)^{-1}(\bz)=\argmin_{\bz'\in\mathcal{Z}}\frac{1}{2\eta}\|\bz'-\bz\|^2+\frac{\mu}{2}\|\bz'-\bw\|^2$.

\begin{algorithm}[tb]
	\caption{SVRG Method for $\text{SVI}(F, Z)$: SVRG$(F,\Z,\bw^{(0)}, T)$}\label{alg:SVRG}
	\begin{algorithmic}[1]
		\STATE \textbf{Input:} $\mu-$Strongly Monotone and $L$-Lipschitz continuous mapping $F=\frac{1}{n}\sum_{i=1}^n F_i(\z)$, set $\Z$, $\bw^{(0)}\in\mathcal{Z}$,  and an integer $T\geq 1$.  
		\STATE Initialize $\eta = \mu/(4(L+\mu)^2)$, $S= \left\lceil4\log(4)(L+\mu)^2/\mu^2\right\rceil$,
		$A(\bz)=\mu(\bz-\bw)+\partial\mathbf{1}_{\mathcal{Z}}(\bz)$, $B(\bz)=F(\bz)-\mu(\bz-\bw)$, and $B_i(\bz)=\frac{1}{n}F_i(\bz)-\frac{\mu}{n}(\bz-\bw)$ for $i=1,\dots,n$.
		\FOR{$t=0,..., T-1$}
		\STATE $\bar B^{(t)}=B(\bw^{(t)})$ and $\bz^{(0)}=\bw^{(t)}$
		\FOR{$s=0,..., S-1$}
		\STATE Sample $i$ uniformly randomly from $\{1,2,\dots,n\}$.
		\STATE $B^{(s)}=\bar B^{(t)}-n B_i(\bw^{(t)})+n B_i(\bz^{(s)})$
		\STATE $\z^{(s+1)}= (I+\eta A)^{-1}(\bz^{(s)}-\eta B^{(s)})$ 
		\ENDFOR
		\STATE $\bw^{(t+1)}=\z^{(S)}$
		\ENDFOR
		\STATE \textbf{Output:} $ \bar\z= \text{Proj}_{\Z}\left(\w^{(T)} - F(\w^{(T)})/(\sqrt{2}L)\right)$
	\end{algorithmic}
\end{algorithm}

\begin{proposition}\label{prop:svrg}When $F(\bz)$ is single-valued, $\mu$-strongly monotone, and of the form $F=\frac{1}{n}\sum_{i=1}^n F_i(\z)$ with each $F_i(\z)$  $L$-Lipschitz continuous,  Algorithm~\ref{alg:SVRG} guarantees that for any $\z\in\Z$
	\begin{eqnarray}\label{eqn:ineq2svrg}
	\mathbb{E}[\max_{\z\in\Z}F(\bar\z)^{\top}(\bar\z - \z)] \leq D^2L(2+ \sqrt{2})(\sqrt{3}/2)^{T}.
	\end{eqnarray}
	In addition, we have
	\[
	\mu \|\bar\z - \w_*\|^2 \leq D^2L(2+ \sqrt{2})(\sqrt{3}/2)^{T}
	\]
	where $\w_*$ denotes a solution to $\text{SVI}(F, \Z)$.
\end{proposition}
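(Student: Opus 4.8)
The plan is to decompose the proof into two largely independent pieces: (i) a linear contraction of the \emph{point} iterates $\w^{(t)}$ produced by the inner SVRG recursion, which we borrow from \citep{DBLP:conf/nips/PalaniappanB16}, and (ii) the conversion of that point guarantee into a guarantee on the gap $\max_{\z\in\Z}F(\bar\z)^\top(\bar\z-\z)$, which is exactly what Lemma~\ref{lem:prom} was built to supply. The last line of Algorithm~\ref{alg:SVRG}, namely $\bar\z=\text{Proj}_{\Z}(\w^{(T)}-F(\w^{(T)})/(\sqrt 2 L))$, is precisely the construction $\bar\z=\text{Proj}_{\Z}(\zh-\eta F(\zh))$ of Lemma~\ref{lem:prom} with $\zh=\w^{(T)}$ and $\eta=1/(\sqrt 2 L)$, so the two pieces compose directly.

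First I would check that the splitting used inside the algorithm meets the hypotheses of \citep{DBLP:conf/nips/PalaniappanB16}: with $A(\bz)=\mu(\bz-\bw)+\partial\mathbf{1}_{\Z}(\bz)$, $B(\bz)=F(\bz)-\mu(\bz-\bw)$, and $B_i(\bz)=\frac1n F_i(\bz)-\frac{\mu}{n}(\bz-\bw)$, the map $A$ is $\mu$-strongly monotone, $B$ is monotone and $(L+\mu)$-Lipschitz, and each $B_i$ is $(L+\mu)/n$-Lipschitz, so that $A+B=F+\partial\mathbf{1}_{\Z}$ has the same unique zero $\w_*$ as $\text{SVI}(F,\Z)$. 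With the prescribed step $\eta=\mu/(4(L+\mu)^2)$ and epoch length $S=\lceil 4\log(4)(L+\mu)^2/\mu^2\rceil$, the SVRG analysis of \citep{DBLP:conf/nips/PalaniappanB16} yields a per-epoch contraction of the squared distance by the factor $3/4$, hence $\E[\|\w^{(T)}-\w_*\|^2]\leq (3/4)^T\|\w^{(0)}-\w_*\|^2\leq (3/4)^T D^2$, where the last step uses the diameter bound in Assumption~\ref{ass:3}(i). By Jensen's inequality this gives $\E[\|\w^{(T)}-\w_*\|]\leq (\sqrt 3/2)^T D$.

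Next I would apply Lemma~\ref{lem:prom} pointwise to the (random) iterate $\zh=\w^{(T)}$ to obtain $\max_{\z\in\Z}F(\bar\z)^\top(\bar\z-\z)\leq DL(2+\sqrt 2)\|\w^{(T)}-\w_*\|$, take expectations, and substitute the bound from the previous step; this produces the first claimed inequality $\E[\max_{\z\in\Z}F(\bar\z)^\top(\bar\z-\z)]\leq D^2L(2+\sqrt 2)(\sqrt 3/2)^T$. For the second inequality I would use $\mu$-strong monotonicity together with the variational characterization of $\w_*$: since $\langle F(\w_*),\bar\z-\w_*\rangle\geq 0$, strong monotonicity gives $\mu\|\bar\z-\w_*\|^2\leq \langle F(\bar\z)-F(\w_*),\bar\z-\w_*\rangle\leq \langle F(\bar\z),\bar\z-\w_*\rangle\leq \max_{\z\in\Z}F(\bar\z)^\top(\bar\z-\z)$, where the last step uses $\w_*\in\Z$. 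Taking expectations and invoking the first bound yields the second (with the left-hand side understood in expectation).

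The only genuinely delicate step is (i), the per-epoch contraction constant. The main obstacle is to confirm that the specific constants $\eta=\mu/(4(L+\mu)^2)$ and $S=\lceil 4\log(4)(L+\mu)^2/\mu^2\rceil$ are calibrated so that the Lyapunov recursion of \citep{DBLP:conf/nips/PalaniappanB16} closes with contraction factor exactly $3/4$ per outer iteration; the quantity $M=\max_{\z,\w\in\Z}F(\w)^\top(\z-\w)+\frac{\mu}{2}\|\z-\w\|^2$ serves as a finite bound on the initial suboptimality that keeps that geometric recursion well posed. Everything after that is the routine composition of a linear rate with Jensen's inequality and the deterministic gap-to-distance estimate above, so no machinery beyond Lemma~\ref{lem:prom} and the cited SVRG result is required.
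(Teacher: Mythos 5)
Your proposal is correct and follows essentially the same route as the paper: cite the per-epoch $(3/4)$-contraction of $\mathbb{E}\|\bw^{(T)}-\w_*\|^2$ from Section D.1 of \citep{DBLP:conf/nips/PalaniappanB16}, bound $\|\bw^{(0)}-\w_*\|$ by $D$, pass to $\mathbb{E}\|\bw^{(T)}-\w_*\|$ via Jensen, and compose with Lemma~\ref{lem:prom}. Your additional strong-monotonicity argument for the second inequality (and the remark that its left-hand side should be read in expectation) fills in a step the paper leaves implicit, but introduces no new ideas beyond the paper's proof.
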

\begin{proof}
	According to Section D.1 in \citep{DBLP:conf/nips/PalaniappanB16}, with the given $S$ and $\eta$,  Algorithm~\ref{alg:SVRG} guarantees that 
	$$
	\mathbb{E}\|\bw^{(T)}-\w_*\|^2\leq\left(\frac{3}{4}\right)^{T}\|\bw^{(0)}-\w_*\|^2,
	$$
	where $\w_*$ denotes a solution to $\text{SVI}(F, \Z)$ (so that $\mathbf{0}\in F(\w_*)+\partial\mathbf{1}_{\mathcal{Z}}(\w_*)$).
	Following Lemma~\ref{lem:prom}, we have
	\begin{align*}
	\mathbb{E}[\max_{\z\in\Z}F(\bar\z)^{\top}(\bar\z - \z)] \leq DL(2+ \sqrt{2})\mathbb{E}\|\bw^{(T)} - \w_*\|\leq D^2L(2+ \sqrt{2})(\sqrt{3}/2)^{T}
	\end{align*}
\end{proof}
Note that the complexity of Algorithm~\ref{alg:SVRG} for $T$ iterations is $O(T(n+L^2/\mu^2))$. 
\begin{corollary}\label{thm:svrg}
	Suppose Assumption~\ref{ass:3}  holds and $F(\bz)$ is single-valued and of the form $F=\frac{1}{n}\sum_{i=1}^n F_i(\z)$ with each $F_i(\z)$  $L$-Lipschitz continuous,  and Algorithm~\ref{alg:SVRG} is used to implement $\text{ApproxSVI}$. Algorithm~\ref{alg:meta} with $\gamma=1/(2\rho)$, $\theta_k =(k+1)^\alpha$ with $\alpha>1$,  $ T_k= (1-\sqrt{3}/2)^{-1}\log((2+\sqrt{2})L(k+1)/\rho)$, and a total of stages $K=6D^2(\alpha+1)/\epsilon^2$  guarantees
	\begin{align}\label{eqn:fc3svrg}
	\E[ \|\z_\tau - \bar\z_\tau \|^2]\leq \epsilon^2,
	\end{align}
	and
	\begin{align}\label{eqn:fc4svrg}
	\E[\text{dist}^2(0, \partial (f(\bar\u_\tau, \bar\v_\tau)+ 1_{\Z}(\bar\u_\tau, \bar\v_\tau))]\leq \epsilon^2/\gamma^2,
	\end{align}
	where $\bar\z_\tau$ is the solution to $\text{SVI}(F^\gamma_{\z_\tau}, \Z)$. The total iteration complexity is $\widetilde O(D^2(n+L^2/\rho^2)/(\epsilon^2))$.
\end{corollary}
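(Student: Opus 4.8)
The plan is to realize Corollary~\ref{thm:svrg} as a direct instantiation of the generic framework of Theorem~\ref{thm:meta}, using Algorithm~\ref{alg:SVRG} (SVRG) as the subroutine ApproxSVI and Proposition~\ref{prop:svrg} as the per-stage guarantee. Concretely, I would (i) identify the strong-monotonicity and Lipschitz parameters of the subproblem mapping $F_k=F_{\z_k}^\gamma$, (ii) show that a logarithmically large $T_k$ forces the SVRG output to satisfy the decay condition~\eqref{eqn:sa_converge} with an explicit constant $c$, (iii) feed $c$ into Theorem~\ref{thm:meta} and convert the resulting bound on $\E[\|\z_\tau-\bar\z_\tau\|^2]$ into the near-stationarity guarantee via Lemma~\ref{lem:zbw2}, and (iv) sum the per-stage SVRG costs to obtain the stated gradient complexity.

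For step (i), with $\gamma=1/(2\rho)$ Lemma~\ref{lem:PPM} gives that $F_k$ is $\mu$-strongly monotone with $\mu=\gamma^{-1}-\rho=\rho$. Since $F=\frac1n\sum_i F_i$ with each $F_i$ being $L$-Lipschitz, the subproblem mapping $F_k=\frac1n\sum_i\bigl(F_i(\cdot)+\gamma^{-1}(\cdot-\z_k)\bigr)$ again has finite-sum form with each summand $(L+2\rho)$-Lipschitz. Thus Proposition~\ref{prop:svrg} applies to $\text{SVI}(F_k,\Z)$ with its ``$\mu$'' equal to $\rho$ and its ``$L$'' equal to $L+2\rho$: running SVRG for $T_k$ outer iterations from the proximal center $\z_k$ produces $\z_{k+1}=\bar\z$ with
$$\E\Bigl[\max_{\z\in\Z}F_k(\z_{k+1})^{\top}(\z_{k+1}-\z)\,\Big|\,\z_k\Bigr]\ \le\ D^2(L+2\rho)(2+\sqrt2)(\sqrt3/2)^{T_k},$$
where I stress that the right-hand side is uniform in $\z_k$ because Proposition~\ref{prop:svrg} bounds $\|\bw^{(0)}-\w_*\|$ by the diameter $D$; this uniformity is exactly what lets the bound control a conditional expectation given $\z_k$.

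For step (ii), since $F_k$ is single valued we have $\xi_{k+1}=F_k(\z_{k+1})$, and $\max_{\z}\E[\,\cdot\mid\z_k]\le\E[\max_{\z}\,\cdot\mid\z_k]$, so the displayed bound already controls the left-hand side of~\eqref{eqn:sa_converge}. Using $(\sqrt3/2)^{T_k}\le\exp(-(1-\sqrt3/2)T_k)$ together with $T_k=(1-\sqrt3/2)^{-1}\log\!\bigl((2+\sqrt2)(L+2\rho)(k+1)/\rho\bigr)$, the right-hand side collapses to $D^2\rho/(k+1)$, so~\eqref{eqn:sa_converge} holds with $c=D^2\rho$. (The statement writes $L$ in place of $L+2\rho$, which only affects the constant.) Plugging $c=D^2\rho$ into Theorem~\ref{thm:meta} gives $\E[\|\z_\tau-\bar\z_\tau\|^2]\le \frac{2D^2(\alpha+1)}{K}+\frac{4D^2\rho(\alpha+1)}{K\rho}=\frac{6D^2(\alpha+1)}{K}$, and the choice $K=6D^2(\alpha+1)/\epsilon^2$ yields~\eqref{eqn:fc3svrg}. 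Lemma~\ref{lem:zbw2} then gives $\text{dist}(0,\partial(f+1_{\Z}))\le\|\z_\tau-\bar\z_\tau\|/\gamma$ pointwise, and squaring and taking expectations produces~\eqref{eqn:fc4svrg}.

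Finally, for step (iv), the remark following Proposition~\ref{prop:svrg} records that $T_k$ outer iterations of Algorithm~\ref{alg:SVRG} cost $O\bigl(T_k(n+(L+2\rho)^2/\rho^2)\bigr)$ gradient evaluations. Since $\rho\le L$ (as $F$ is $L$-Lipschitz, hence $L$-weakly monotone) forces $(L+2\rho)^2/\rho^2=O(L^2/\rho^2)$ and $T_k=O(\log((k+1)L/\rho))$, summing over $k=0,\dots,K-1$ with $K=O(D^2(\alpha+1)/\epsilon^2)$ gives the total $\widetilde O\bigl(D^2(n+L^2/\rho^2)/\epsilon^2\bigr)$. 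I expect the only genuinely delicate point to be the bookkeeping of the two independent sources of randomness—the inner SVRG sampling and the outer random index $\tau$—together with the swap $\max_\z\E\le\E\max_\z$ and the verification that Proposition~\ref{prop:svrg}'s guarantee is uniform enough to serve as a conditional bound given $\z_k$; once these are in place, the remaining steps are the same constant-tracking as in Corollaries~\ref{thm:gd} and~\ref{thm:eg}.
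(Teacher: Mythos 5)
Your proposal is correct and follows exactly the route the paper intends: instantiate Theorem~\ref{thm:meta} with $c$ obtained from Proposition~\ref{prop:svrg} applied to the $\rho$-strongly monotone, $(L+2\rho)$-Lipschitz finite-sum mapping $F_k$, pass to near-stationarity via Lemma~\ref{lem:zbw2}, and sum the per-stage costs $O(T_k(n+L^2/\rho^2))$. Your observations about the $\max_{\z}\E\le\E\max_{\z}$ swap, the uniformity of the SVRG bound in the proximal center, and the harmless $L$ versus $L+2\rho$ discrepancy in $T_k$ are all accurate and match the constant-tracking implicit in the paper's statement.
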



\section{Experimental Results}
\subsection{Synthetic Experiment}
We conduct two synthetic experiments to validate the presented theoretical results. 

The first experiment focuses on the comparison between IPP-GD and IPP-EG. To this end, we consider
$f(\x,\y) = \frac{1}{2}\x^\top A\x+\x^\top \y-\frac{1}{2}\y^\top A\y$,  $\X=\{(x_1,x_2)\in\R^2:0\leq x_2\leq x_1\leq 1\}$, $\Y=\{(y_1,y_2)\in\R^2:0\leq y_2\leq y_1\leq 1\}$,  where  $A=\text{diag}(1,-\rho)$, $0<\rho<1$. It is not difficult to show that Assumption \ref{ass:3} holds, i.e., $D=\sqrt{2}$, the mapping $F(\x,\y)=(\partial_{\x}f(\x,y), -\partial_{\y}f(\x,\y))^\top $ is $\rho$-weakly monotone and the corresponding MVI problem has a solution $(0,0,0,0)$. In addition, $F(\x,\y)$ is Lipschitz continuous with modulus $L=1$.  For both algorithms, we start from $(0.2,0.1,0.2,0.1)$, set the hyper-parameters of two algorithms according to Corollary~\ref{thm:gd} and~\ref{thm:eg} respectively, run both algorithms until reaching a point whose  gradient's magnitude  is less than $10^{-2}$, and report the number of gradient evaluations in Table~\ref{ex:table2}. 
To illustrate the effect of condition number on the convergence, we set different values of $\rho$ in a range $\{10^{-7:1:-1}\}$. 
From Table~\ref{ex:table2}, we can see that both IPP-GD and IPP-EG converge, and IPP-EG converges much faster. It is worth mentioning that when $\rho$ is sufficiently small (i.e. the condition number $L/\rho$ is sufficiently large), IPP-EG is $L/\rho$ times faster than IPP-GD, which is consistent with our theory. 

In the second experiment, We aim to verify the theory of IPP-SGD and IPP-SVRG. We consider the following finite sum min-max optimization problem.
$$\min_{\x\in\X}\max_{\y\in\Y}f(\x,\y) = \frac{1}{n}\sum_{i=1}^{n}\xi_i\left[\frac{1}{2}\x^\top A\x+\x^\top \y-\frac{1}{2}\y^\top A\y\right],$$  
where $\xi_i\sim N(1, 5)$, $i=1,\ldots, n-1$, $n=1000$, and $\xi_n=n-\sum_{i=1}^{n-1}\xi_i$, $\X=\{(x_1,x_2)\in\R^2:0\leq x_2\leq x_1\leq 1\}$, $\Y=\{(y_1,y_2)\in\R^2:0\leq y_2\leq y_1\leq 1\}$,  $A=\text{diag}(1,-\rho)$, $0<\rho<1$. We can also show that Assumption \ref{ass:3} holds, i.e., $D=\sqrt{2}$, the mapping $F(\x,\y)=(\partial_{\x}f(\x,y), -\partial_{\y}f(\x,\y))^\top $ is $\rho$-weakly monotone and the corresponding MVI problem has a solution $(0,0,0,0)$. In addition, $f(\x,\y)$ is $L$-Lipschitz continuous with modulus $1$. For IPP-SGD and IPP-SVRG, we start from $(0.2,0.1,0.2,0.1)$, run both algorithms until reaching a point whose gradient's magnitude is less than a certain threshold $\epsilon$. We consider two different values of $\epsilon$ ($10^{-10}$ and $10^{-1}$) and report the corresponding number of stochastic gradient evaluations in Table~\ref{ex:table3}. We can see that both algorithms converge. IPP-SVRG converges faster than IPP-SGD when the target accuracy is sufficiently small (e.g., $10^{-10}$), and IPP-SGD converges faster than IPP-SGD when the target accuracy is not small enough (e.g. $10^{-1}$). This observation is consistent with our theory.
\begin{table}[t]
	\label{ex:table2}
	\centering
	\scalebox{1}{
		\begin{tabular}{|c|c|c|c|c|c|c|c|}
			\hline
			$\rho$& $10^{-1}$       & $10^{-2}$       & $10^{-3}$       & $10^{-4}$      & $10^{-5}$      & $10^{-6}$      & $10^{-7}$            \\\hline
			IPP-GD & $9239$ & $500334$  & $14857$  & $144579$ & $1441992$ & $14416139$              & $>10^8$                          \\\hline
			IPP-EG & $1854$                  & $10942$ & $62$ & $60$ & $60$ & $60$ & $60$ \\\hline
	\end{tabular}}
	\caption{IPP-EG vs IPP-GD for the synthetic experiment. The numbers indicate the number of gradient evaluations to achieve a point where the gradient's norm is less than $10^{-2}$.
	}
\end{table}
\begin{table}
	\centering
	\begin{tabular}{|c|c|c|c|}
		\hline
		$\rho$ ($\epsilon=10^{-10}$)& $10^{-1}$       & $10^{-2}$       & $10^{-3}$    \\\hline  
		IPP-SGD & $12710$ & $3051$  & $2515$  \\\hline
		IPP-SVRG & $1002$                  & $1006$ & $1503$ \\\hline
	\end{tabular}
	\begin{tabular}{|c|c|c|c|}
		\hline
		$\rho$ ($\epsilon=10^{-1}$)& $10^{-1}$       & $10^{-2}$       & $10^{-3}$    \\\hline  
		IPP-SGD & $172$ & $70$  & $67$  \\\hline
		IPP-SVRG & $1002$                  & $1002$ & $1025$ \\\hline
	\end{tabular}
	\caption{IPP-SGD vs IPP-SVRG for the synthetic  experiment. The left (right) table is the number of stochastic gradient evaluations to achieve a point where the gradient's norm is less than $10^{-10}$ ($10^{-1}$).
	}
	\label{ex:table3}
\end{table}
%
\begin{figure*}[t]
	\hspace*{-0.1in}\includegraphics[scale=0.18]{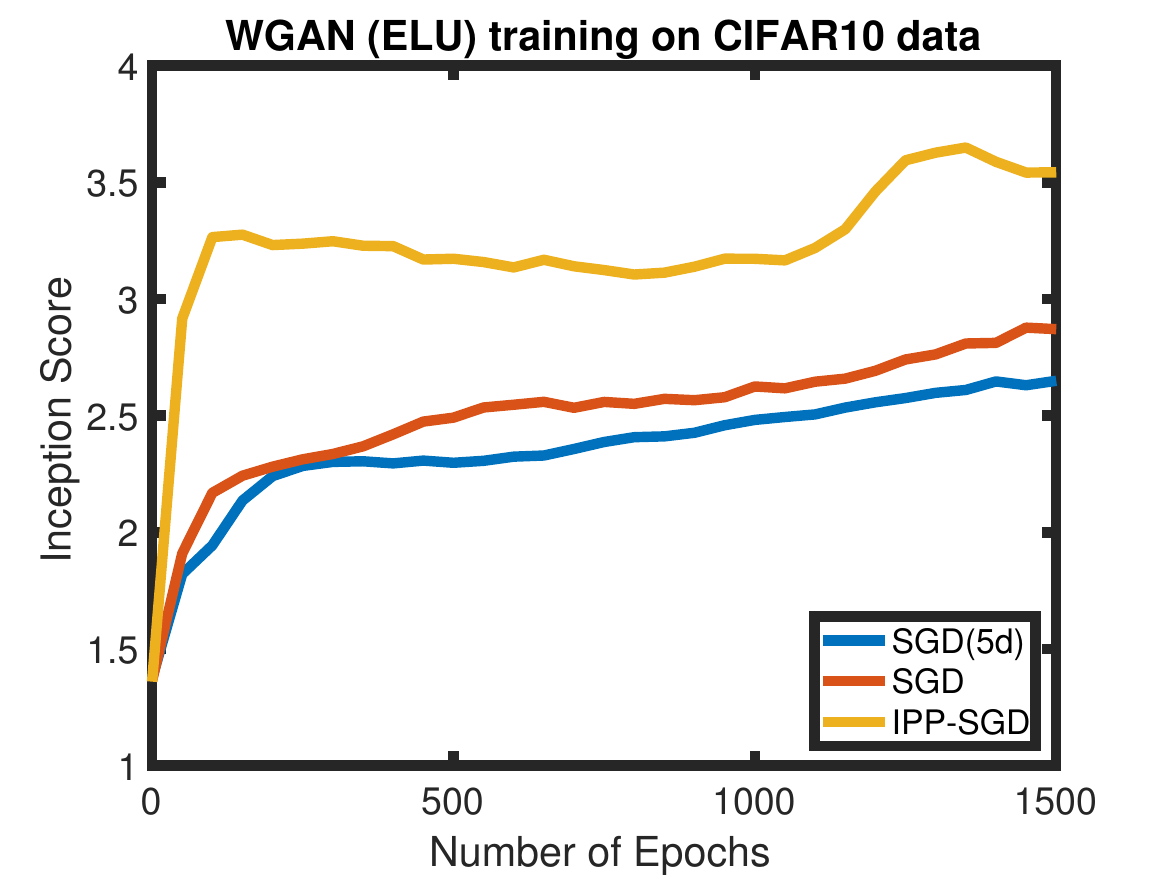}
	\hspace*{-0.1in}\includegraphics[scale=0.18]{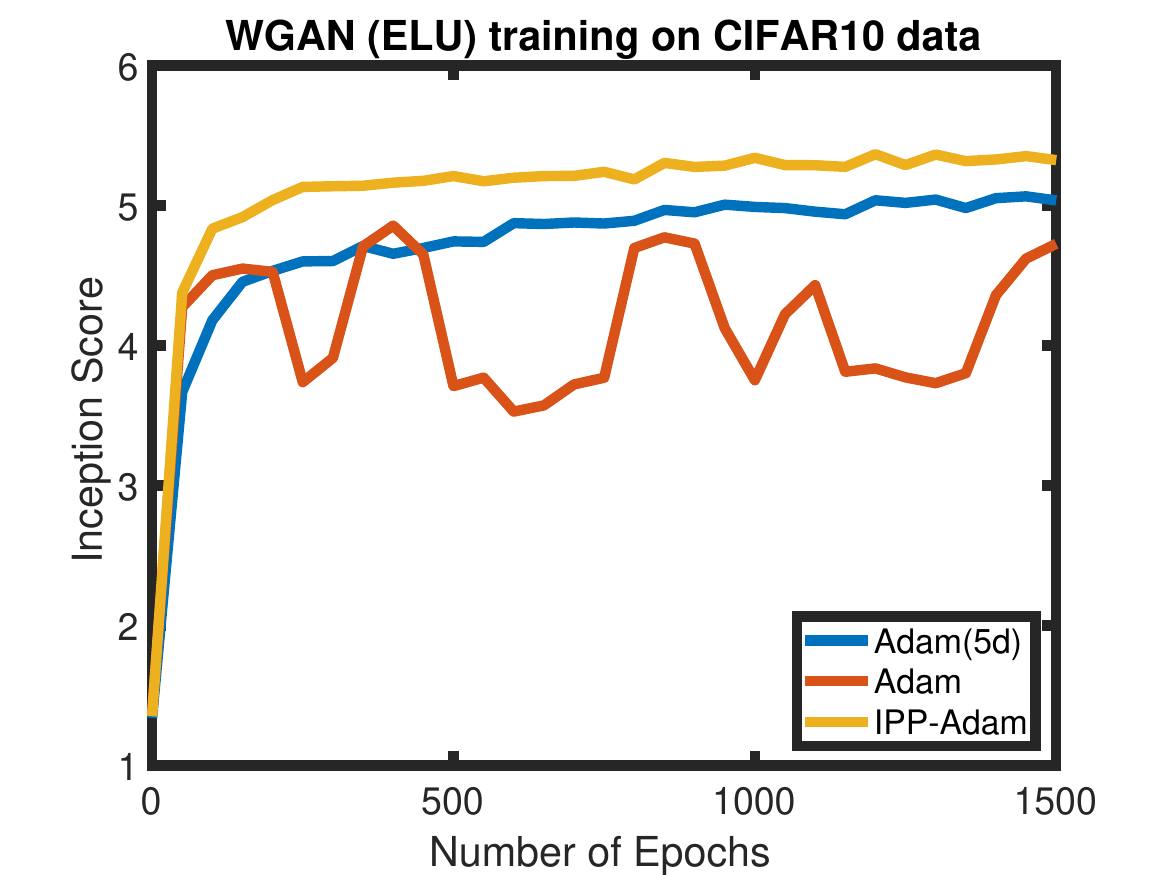}
	\hspace*{-0.08in}\includegraphics[scale=0.18]{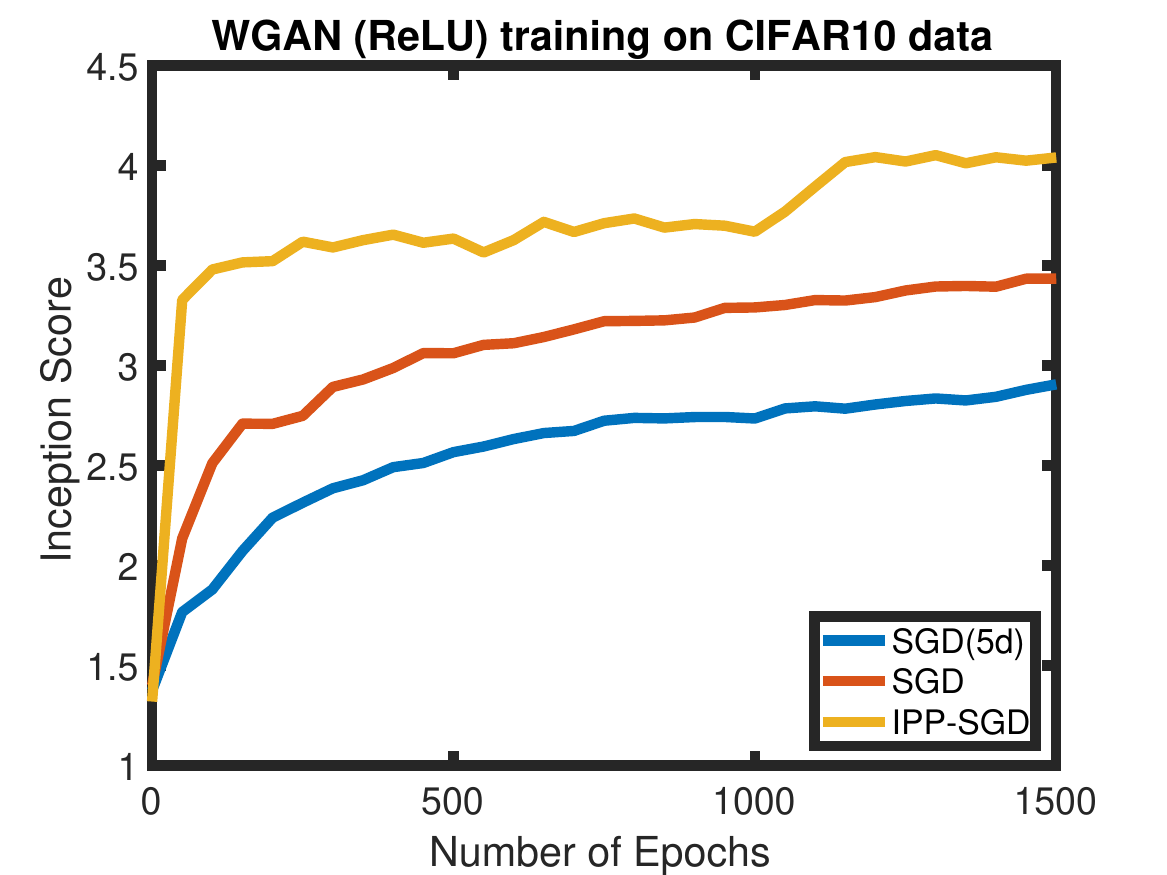}
	\hspace*{-0.05in}\includegraphics[scale=0.18]{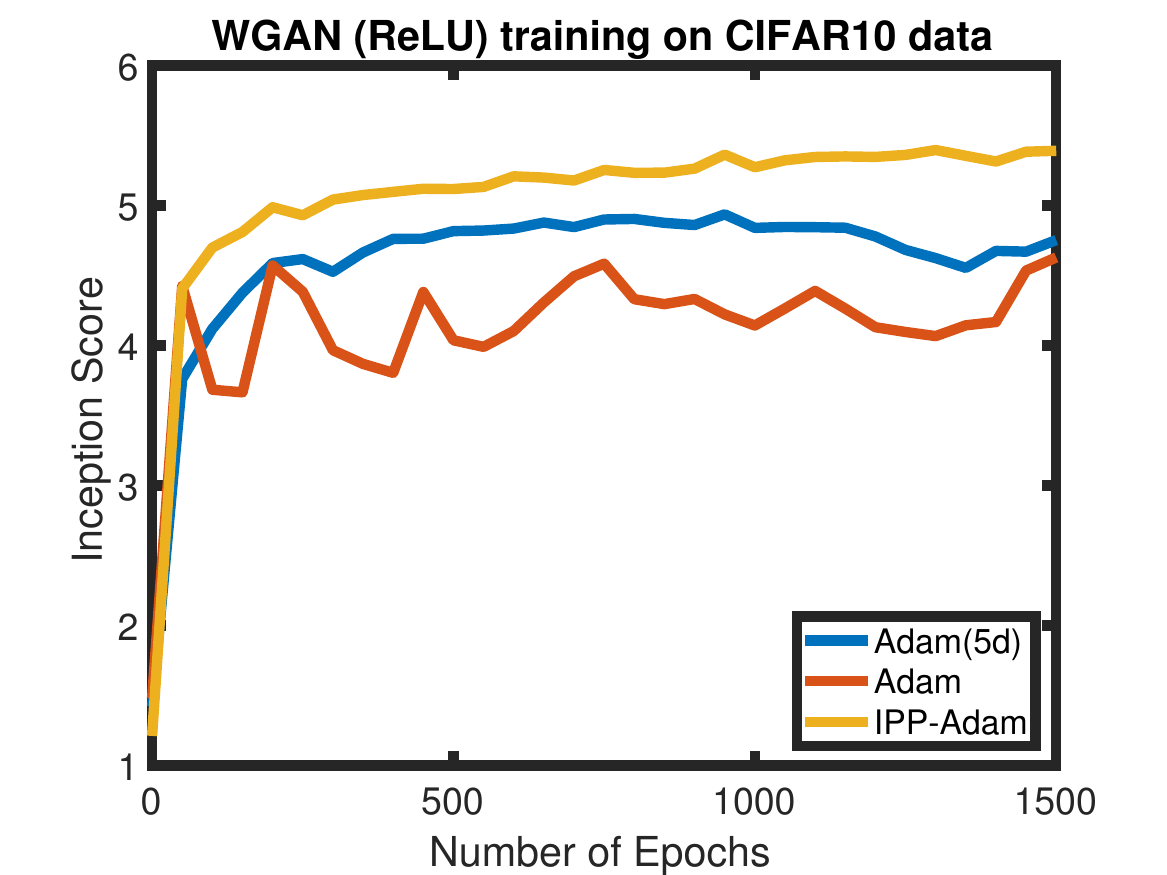}

	\hspace*{-0.1in}\includegraphics[scale=0.18]{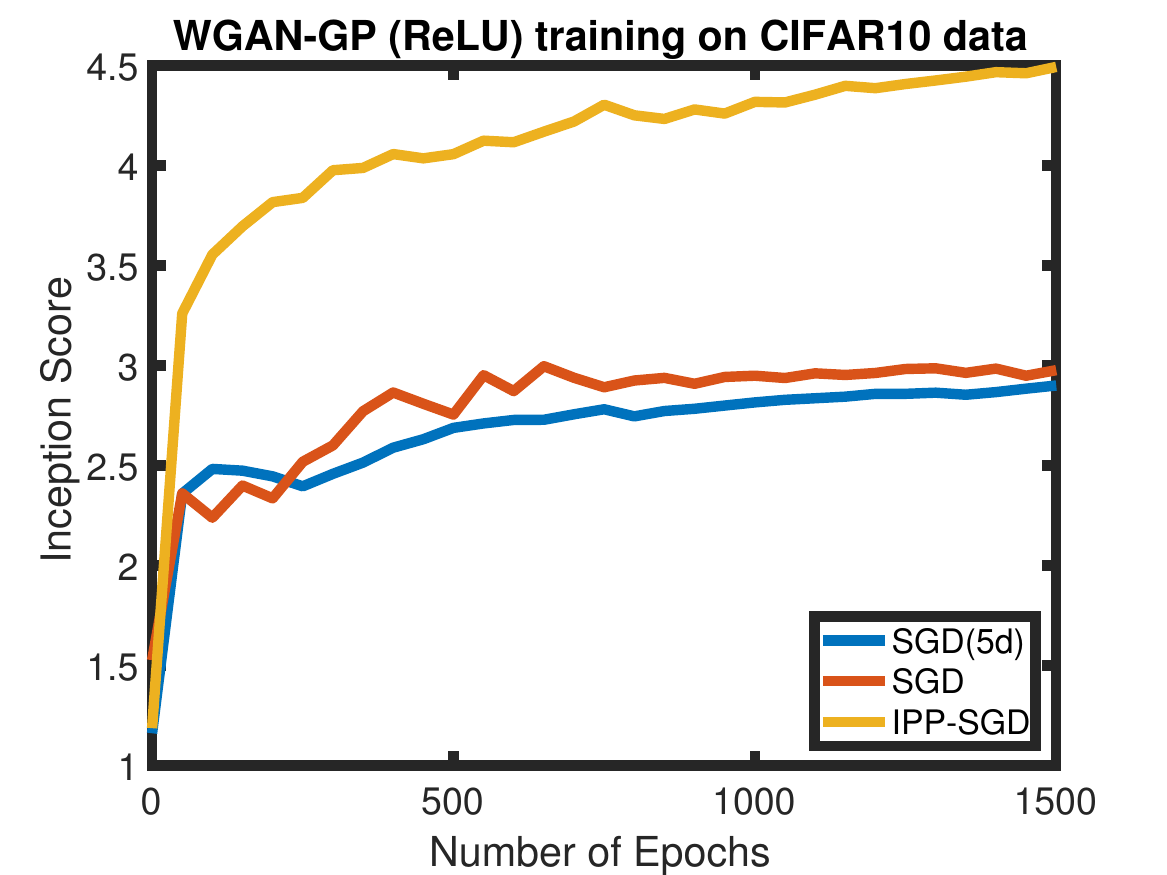}
	\hspace*{-0.1in}\includegraphics[scale=0.18]{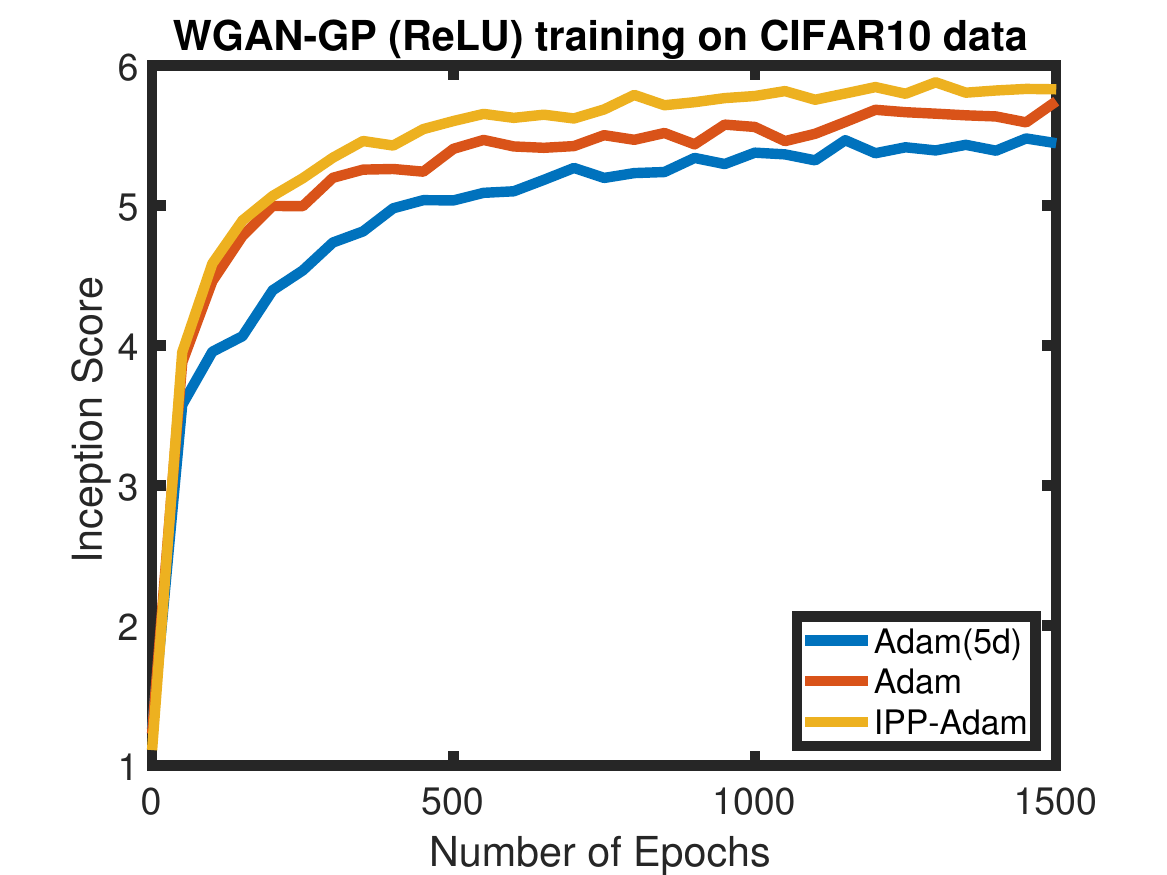}
	\hspace*{-0.1in}	\includegraphics[scale=0.18]{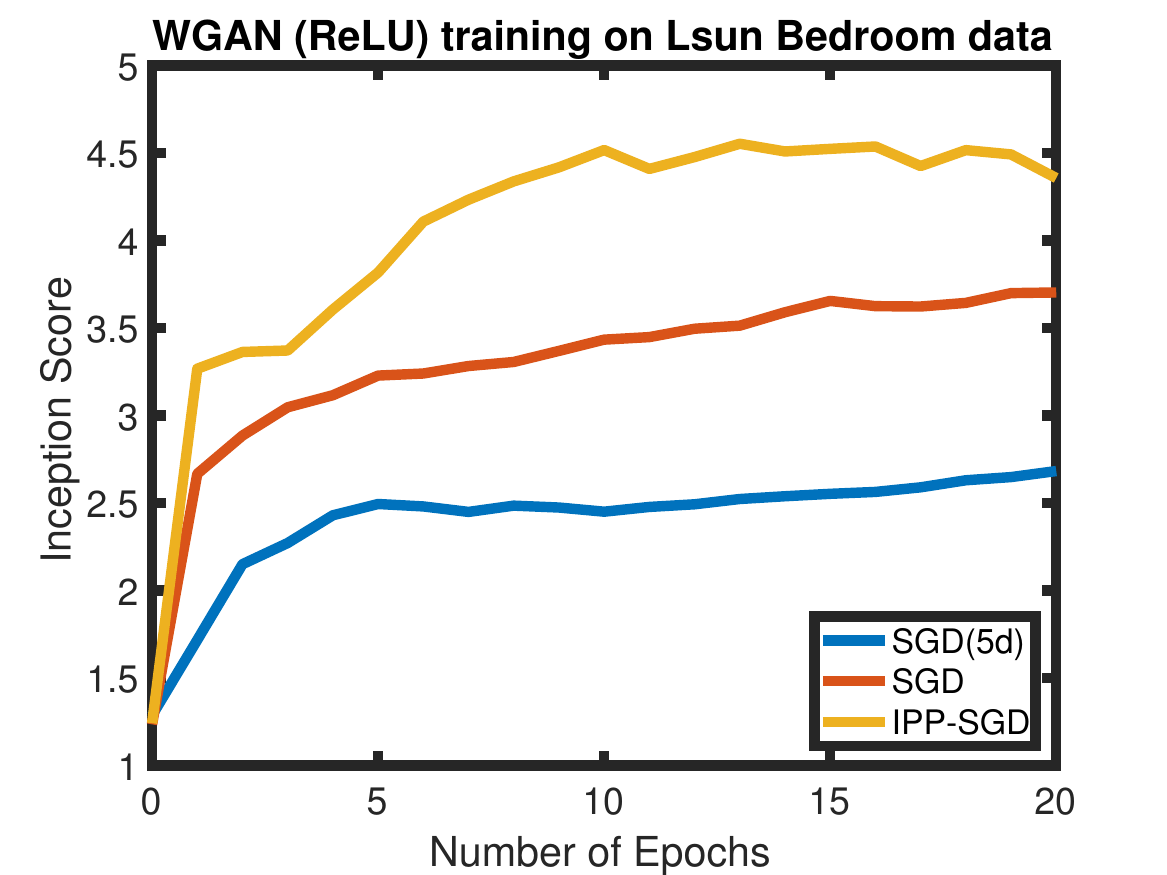}
	\hspace*{-0.1in}	\includegraphics[scale=0.18]{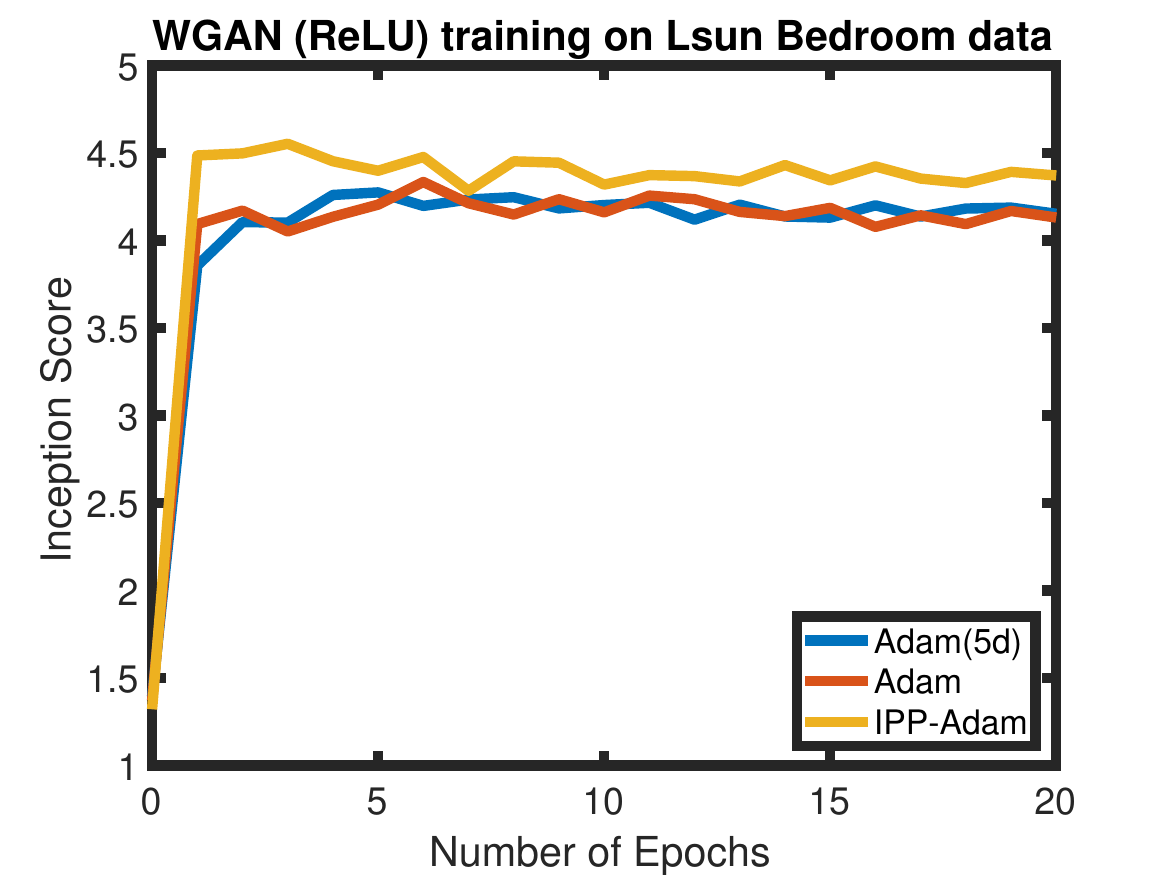}
	\caption{Comparison of different methods for solving WGAN and WGAN-GP.}
	\label{exp:cifar10}
\end{figure*}

\subsection{Training GANs}
Next, we report some experimental results to justify the effectiveness of the proposed algorithm for training GANs. Although there are hundreds of variants of GAN formulations, we focus on two popular variants, namely WGAN~\citep{pmlr-v70-arjovsky17a}  and WGAN with gradient penalty (WGAN-GP)~\citep{DBLP:conf/nips/GulrajaniAADC17}. We conduct the experiments using the same neural network structures for the discriminator and generator as in the original implementations on two datasets (CIFAR10 and LSUN Bedroom). 
We use two activation functions in the networks, namely ReLU and ELU~\citep{clevert2015fast}, with the latter being a smooth function which is consistent with our assumption. 

We consider two stochastic methods for implementing the subroutine ApproxSVI, namely primal-dual SGD and primal-dual Adam, and refer to our algorithms as IPP-SGD and IPP-Adam. For both of them, the update for the discriminator parameters  and for the generator parameters is conducted simultaneously. We compare IPP-SGD with two baselines, namely SGD (5d) and SGD, and compare IPP-Adam with two baselines, namely,  Adam (5d) and Adam.  5d means that 5 steps of updates for the  discriminator parameters is conducted before 1 step update of the generator parameters, which is considered in their original papers~\citep{pmlr-v70-arjovsky17a,DBLP:conf/nips/GulrajaniAADC17}. In SGD and Adam only 1 step of update is performed for the discriminator and the generator simultaneously.


The performance of algorithms is evaluated by using the inception score~\citep{salimans2016improved} of the generated images. The step size of SGD and SGD (5d) is set as $\eta_0/\sqrt{t}$ with $t$ being the number of performed updates as in standard theory for convex-concave problems.  For IPP-SGD, the step size and the number of inner iterations at $k$-th stage are set to be $\eta_0/(k+1)$ and $T_0(k+1)^2$ according to our theory, respectively. For IPP-Adam, the step size and the number of iterations at $k$-th stage are set to be $\eta_0/(k+1)$ and a large value $T$ for simplicity, respectively. All parameters including $\eta_0, T_0, T, \gamma$ are tuned for each algorithm separately to obtain the best performance. 
\paragraph{CIFAR10}
For CIFAR10 data,  we tune the initial stepsize of SGD(5d) from $\{10^{-1},5\times 10^{-2}, 10^{-2}, 5\times 10^{-3}, 10^{-3}, 5\times 10^{-4}, 10^{-4}\}$, choose the one with best performance, and then use the same initial stepsize for SGD and IPP-SGD. For SGD(5d) and SGD, the stepsize at iteration $t$ is set to be $\eta_0/\sqrt{t}$, where $\eta_0$ is initial stepsize. For IPP-SGD, the stepsize and the number of inner iterations at $k$-th stage are set to be $\eta_0/(k+1)$ and $T_0(k+1)^2$ respectively, where $T_0$ is tuned from $\{5000:5000:60000\}$. In our experiment on CIFAR10 data, the tuned initial stepsize is $0.1$ for WGAN (ReLU and ELU) and $0.01$ for WGAN-GP (ReLU). For Adam(5d), Adam and IPP-Adam, we choose the initial stepsize according to the original implementations for WGAN and WGAN-GP. For IPP-Adam, the stepsize and the number of passes of CIFAR10 data (with batchsize 64) at $k$-th stage are set to be $\eta'_0/(k+1)$ and $c'$ respectively, where $\eta'_0$ is the initial stepsize for Adam (5d) and $c'$ is tuned from $\{50:50:400\}$. Note that the number of iterations for each stage is $T=50000c'/64$. For our proposed methods (IPP-Adam and IPP-SGD), we tune the weakly monotone parameter from $\{10^{-3}, 10^{-4}\}$. 
%
%
\paragraph{LSUN Bedroom} For LSUN Bedroom dataset, we use the same strategy as we used in CIFAR10 data for SGD(5d), SGD and IPP-SGD, Adam(5d) and Adam. The tuned initial step size for SGD(5d) is $0.01$ for WGAN (ReLU and ELU). For IPP-Adam, we use the same initial stepsize as Adam(5d) and tune the number of inner iterations in a different range. Specifically, it is tuned from $\{5000:5000:50000\}$.


The inception score vs the number of epochs passing the data for solving WGAN and WGAN-GP on the two datasets are plotted in Figure~\ref{exp:cifar10}. 
We can see that in all cases, the proposed IPP methods outperform their plain versions used in previous studies. We also notice that using 5 steps of updates for the discriminator before 1 step of update for the generator is observed to be only effective for training WGAN by Adam.  This strategy is not that effective for training WGAN-GP or SGD. We also notice that using ADAM in IPP converges faster than using SGD in IPP. 


\section{Conclusion}
In this paper, we have presented the first non-asymptotic convergence result for finding a nearly stationary point of non-convex non-concave  saddle-point problems. Our analysis is built on the tool of variational inequalities. An inexact proximal point method is presented with different variations that employ different algorithms for solving the constructed strongly monotone variational inequalities. Synthetic experiments verify our theory and experiments on training two variants of generative adversarial networks demonstrate the effectiveness of the proposed methods. 

\section{Acknowledgments}
We would like to thank anonymous reviewers for their helpful comments. This work was partially supported by NSF  $\#1933212$, NSF CAREER Award $\#1844403$. 
\appendix
\newpage
\section{Proofs of Lemmas}
\label{sec:proofslemmas}
\begin{proof}[Proof of Lemma~\ref{lem:weaklymonotone}]
	Suppose  $F(\bz)=(\partial_x f(\bx,\by),\partial_y[-f(\bx,\by)])$ is $\rho$-weakly monotone. Let $\bz=(\bx,\by)$ and $\bz'=(\bx',\by)$.
	By definition of weak monotonicity, we have for any $\by\in\Y$,  any $\bx,\bx'\in\X$, any $\bxi_x\in\partial_x f(\bx,\by)$, any $\bxi_x'\in\partial_x f(\bx',\by)$, any $-\bxi_y\in\partial_y[- f(\bx,\by)]$, any $-\bxi_y'\in\partial_y[- f(\bx',\by)]$
	$$
	\left\langle\bxi_x-\bxi_x',\bx-\bx'\right\rangle=\left\langle\bxi_x-\bxi_x',\bx-\bx'\right\rangle-\left\langle\bxi_y-\bxi_y',\by-\by\right\rangle=\left\langle\bxi-\bxi',\bz-\bz'\right\rangle\geq -\rho\|\bx-\bx'\|_2
	$$
	where $\bxi=(\bxi_x,-\bxi_y)$ and  $\bxi'=(\bxi_x',-\bxi_y')$.
	This implies that $f(\bx,\by)$ is $\rho$-weakly convex in $\bx$ for any $\by\in\Y$. Similarly, we can show that $f(\bx,\by)$ is $\rho$-weakly concave in $\by$ for any $\bx\in\X$.
	
	Suppose $f(\bx,\by)$ is $\rho$-weakly-convex-weakly-concave. Now let $\bxi_x\in\partial_x f(\bx,\by)$, any $\bxi_x'\in\partial_x f(\bx',\by')$, any $-\bxi_y\in\partial_y[- f(\bx,\by)]$, any $-\bxi_y'\in\partial_y[- f(\bx',\by')]$.  We have 
	\begin{eqnarray*}
		\left\langle\bxi_x-\bxi_x',\bx-\bx'\right\rangle &\geq& f(\bx,\by)-f(\bx',\by)+f(\bx',\by')-f(\bx,\by')-\rho\|\bx-\bx'\|^2\\
		\left\langle\bxi_y'-\bxi_y,\by-\by'\right\rangle &\geq& -f(\bx,\by)+f(\bx,\by')-f(\bx',\by')+f(\bx',\by)-\rho\|\by-\by'\|^2
	\end{eqnarray*}
	for any $\bx,\bx'\in\X$, any $\by,\by\in\Y$,  any $\bxi_x\in\partial_x f(\bx,\by)$, any $\bxi_x'\in\partial_x f(\bx',\by')$, any $-\bxi_y\in\partial_y[- f(\bx,\by)]$, any $-\bxi_y'\in\partial_y[- f(\bx',\by')]$.
	Adding these two inequalities together, we have
	\begin{eqnarray*}
		\left\langle\bxi-\bxi',\bz-\bz'\right\rangle\geq -\rho\|\bx-\bx'\|_2
	\end{eqnarray*}
	where $\bxi=(\bxi_x,-\bxi_y)$ and  $\bxi'=(\bxi_x',-\bxi_y')$. This  means $F(\bz)$ is $\rho$-weakly monotone.
\end{proof}

\begin{proof}[Proof of Lemma~\ref{lem:PPM}]
	Given any $\bz$ and $\bz'$ in $\mathcal{Z}$, any $\bxi\in F(\bz)$ and any $\bxi'\in F(\bz')$, we have 
	\small
	\begin{eqnarray*}
		&&\left\langle \bxi+\frac{1}{\gamma}(\bz-\bw)-\bxi'-\frac{1}{\gamma}(\bz'-\bw), \bz-\bz'\right\rangle\\
		&\geq& \left\langle \bxi-\bxi', \bz-\bz'\right\rangle+\frac{1}{\gamma}\|\bz-\bz'\|^2\geq\left(\frac{1}{\gamma}-\rho\right)\|\bz-\bz'\|^2,
	\end{eqnarray*}
	\normalsize
	where the second inequality is because of the $\rho$-weakly monotonicity of $F$. Since $F_{\bw}^\gamma(\bz)$ consists of all vectors like $\bxi+\frac{1}{\gamma}(\bz-\bw)$ with $\bxi\in F(\bz)$, we conclude that  $F_{\bw}^\gamma$ is $(\frac{1}{\gamma}-\rho)$-strongly monotone.
\end{proof}

\begin{proof}[Proof of Lemma~\ref{lem:zbw2}]
	Since $\bar\bw$ is the solution to $\text{SVI}(F_{\bw}^\gamma,\Z)$ and $F_{\w}$ is strongly monotone, then $\bar\w$ is the min-max saddle-point of  the convex-concave problem $\min_{\x\in\X}\max_{\y\in\Y}f(\x, \y) + \frac{1}{2\gamma}\|\x- \u\|^2 - \frac{1}{2\gamma}\|\y- \v\|^2$~\citep{Nemirovski2015InformationbasedCO}. Denote by $f_\w(\x, \y) = f(\x, \y) + \frac{1}{2\gamma}\|\x- \u\|^2 - \frac{1}{2\gamma}\|\y- \v\|^2$.  Then 
	$$
	\mathbf{0}\in\partial_x[f_{\bw}(\bar\bu,\bar\bv)+\mathbf{1}_{\X}(\bar\bu)],\quad \mathbf{0}\in\partial_y[-f_{\bw}(\bar\bu,\bar\bv)+\mathbf{1}_{\y}(\bar\bv)] 
	$$ 
	which means there exist $\bar\bxi_{x}\in\partial_x[f(\bar\bu,\bar\bv)+\mathbf{1}_{\X}(\bar\bu)]$ and $-\bar\bxi_{y}\in\partial_y[-f(\bar\bu,\bar\bv)+\mathbf{1}_{\y}(\bar\bv)] $ such that
	$$
	\|\bar\bxi\|=\frac{1}{\gamma}\|\bar\bw-\bw\|,\quad \bar\bxi=(\bar\bxi_{x},-\bar\bxi_{y}),
	$$
	which completes the proof. 
	%
	%
\end{proof}

\begin{proof}[Proof of Lemma~\ref{lem:prom}]
	We use the following notations $\z^{(t)}=\zh$, $\z^{(t+1)} = \bar\z  = \text{Proj}_{\Z}(\z^{(t)} - \eta F(\z^{(t)}))$.  For any $\z$, we define
	\begin{align*}
	\phi_\eta(\z) := F(\z) - F(\z_+)+ \frac{1}{\eta}[\z_+ - \z],
	\end{align*}
	where $\z_+ = \text{Proj}_{\Z}(\z - \eta F_k(\z))$. Then we have $\|\phi_\eta(\z)\|\leq (L+\eta^{-1})\|\z - \z_+\| $ and thus
	\begin{align}
	\nonumber
	\max_{\z\in\Z}F(\z^{(t+1)})^{\top}(\z^{(t+1)} - \z) &=  \max_{\z\in\Z}(F(\z^{(t+1)}) + \phi_\eta(\z^{(t)}) - \phi_\eta(\z^{(t)}))^{\top}(\z^{(t+1)} - \z)\\\label{eqnkey20}
	&\leq \max_{\z\in\Z} -\phi_\eta(\z^{(t)})^{\top}(\z^{(t)} - \z)\leq D(L+\eta^{-1})\|\z^{(t)} - \z^{(t+1)}\|,
	\end{align}
	where the first inequality uses the optimality condition of $\z^{(t+1)} = \arg\min_{\z\in\Z}\|\z - (\z^{(t)} - \eta F(\z^{(t)}))\|^2$ that says
	$$
	\left[(\z^{(t+1)}  - \z^{(t)})/\eta +  F(\z^{(t)}))\right]^\top(\z-\z^{(t+1)})=\left[F(\z^{(t+1)}) +\phi_\eta(\z^{(t)})\right]^\top(\z-\z^{(t+1)})\geq0,\quad\forall\z\in\Z.
	$$ 
	
	To continue, we define $\y^{(t+1)} = \arg\min_{\z\in\Z}\|\z - (\z^{(t)} - \eta F(\z^{(t+1)}))\|^2$. According to Lemma 5 in~\citep{DBLP:journals/coap/DangL15}, we have
	\begin{align}
	(1 - L^2\eta^2)\frac{\|\z^{(t)}- \z^{(t+1)}\|^2}{2}\leq \frac{\|\z^{(t)} - \w_*\|^2 - \|\y^{(t+1)} - \w_*\|^2 }{2}.
	\end{align}
	Plugging the value of $\eta$ gives
	\begin{align*}
	\|\zh- \bar\z\|\leq \sqrt{2}\|\zh- \w_*\|.
	\end{align*}
	Combining this inequality with~(\ref{eqnkey20}) we have
	\begin{align*}
	\max_{\z\in\Z}F(\bar\z)^{\top}(\bar\z - \z) &\leq DL(1+ \sqrt{2})\sqrt{2}\|\zh - \w_*\|
	\end{align*}
\end{proof}

\section{Weakly-Convex-Weak-Concave Examples}
\label{sec:examples}
In this section, we present some examples of the min-max problem whose objective function is weakly-convex and weakly-concave.

{\bf Example 1: } When  $f(\x, \y)$ is $L$-smooth function in terms of $\x$ when fixing $\y$ and $L$-smooth function in terms of $\y$ when fixing $\x$, it is $L$ weakly-convex-weakly-concave. This kind of problems can be found in training GAN~\cite{pmlr-v70-arjovsky17a} and reinforcement learning~\cite{DBLP:conf/icml/DaiS0XHLCS18}. Given the smoothness, we have
\begin{align*}
f(\x, \y)\geq f(\x', \y) + \nabla_x f(\x', \y)^{\top}(\x  -\x') - \frac{L}{2}\|\x - \x'\|^2, \forall \y\in\Y
\end{align*}
which gives us
\begin{align*}
f(\x, \y) + \frac{L}{2}\|\x\|^2\geq f(\x', \y) + \frac{L}{2}\|\x'\|^2+ (\nabla_x f(\x', \y)+L\x')^{\top}(\x  -\x'), \forall \y\in\Y,
\end{align*}
which means $f(\x, \y) + \frac{L}{2}\|\x\|^2$ is convex in terms of $\x$ for any fixed $\y\in\Y$.  Similarly, we can prove $f(\x, \y) -\frac{L}{2}\|\y\|^2$ is a concave function in terms of $\y$ for any $\x\in\X$.

{\bf Example 2: } Let us consider  $f(\x, \y)  = \x^{\top}A\y + \phi(g(\x)) -   \psi(h(\y))$ where  $\phi(\cdot)$ and $\psi(\cdot)$ are  Lipschitz continuous convex functions, $g(\x)$ and $h(\y)$ are smooth mappings. Following~\citep{dmitriy2017}, we can prove that $f(\x, \y)$ is weakly convex in terms of $\x$ when fixing $\x$, and weakly concave in terms of $\y$ when fixing $\y$.  More specifically, 
by the convexity of $\phi(\cdot)$, we have
\begin{align*}
\phi(g(\x))\geq \phi(g(\x')) + \nabla\phi(g(\x'))^{\top}(g(\x) - g(\x'))
\end{align*}
By the smoothness of $g(\x)$, we know there exists $L>0$ such that 
\begin{align*}
\|g(\x) - g(\x') - \nabla g(\x')(\x - \x')\|\leq \frac{L}{2}\|\x - \x'\|^2
\end{align*}
Combining the above two inequalities we have
\begin{align*}
\phi(g(\x))\geq \phi(g(\x')) + \nabla\phi(g(\x'))^{\top} \nabla g(\x')(\x - \x') - \frac{L\|\nabla\phi(g(\x'))\|}{2}\|\x - \x'\|^2
\end{align*}
Since $\phi$ is Lipschitz continuous, there exists $M>0$ such that $\|\nabla\phi(g(\x'))\|\leq M$. As a result, 
\begin{align*}
\phi(g(\x))\geq \phi(g(\x')) + \nabla\phi(g(\x'))^{\top} \nabla g(\x')(\x - \x') - \frac{LM}{2}\|\x - \x'\|^2.
\end{align*}
This proves the weak convexity of $\phi(g(\x))$. Similarly we can prove the weak convexity of $\psi(h(\y))$. Since $\x^{\top}A\y$ is convex in terms of $\x$ when fixing $\y$ and concave in terms of $\y$ when fixing $\x$. Thus, we have weak-convexity and weak-concavity of $f(\x, \y)$. 

It is notable that $f(\x, \y)$ is not necessarily smooth. 

{\bf Example 3: } Let us consider  $f(\x, \y)  = \phi(g(\x) -  h(\y))$ where  $\phi(\cdot)$ is a non-decreasing smooth function, $g(\x)$ and $h(\y)$ are Lipschitz continuous convex functions. Following~\cite{DBLP:journals/corr/abs-1805-07880}, it can be proved that $f(\x, \y)$ is weakly convex in terms of $\x$ when fixing $\y$, and weakly concave in terms of $\y$ when fixing $\x$. The following inequalities hold for any  $\x, \x'\in\X, \y, \y'\in\Y$.  In fact, 
by the smoothness of $\phi$, there exists $L>0$ such that 
\begin{align*}
\phi(g(\x) - h(\y))\geq \phi(g(\x') -  h(\y)) + \phi'(g(\x') -  h(\y))(g(\x) - g(\x')) - \frac{L}{2}|g(\x) - g(\x')|^2,
\end{align*} 
Since $g(\x)$ is convex and Lipschitz continuous, we have
\begin{align*}
g(\x)- g(\x')\geq \nabla g(\x')^{\top}(\x -\x'), \quad |g(\x) - g(\x')|\leq G_x\|\x - \x'\|
\end{align*}
Noting that $\phi$ is non-decreasing function with $\phi'(\cdot)\geq 0$, then we have
\begin{align*}
\phi(g(\x) - h(\y))\geq \phi(g(\x') - h(\y)) + \phi'(g(\x') - h(\y))\nabla g(\x')^{\top}(\x -\x') - \frac{LG_x^2}{2}\|\x - \x'\|^2,
\end{align*} 
which implies weak-convexity in terms of $\x$ when fixing $\y$.  Similarly, 
\begin{align*}
\phi(g(\x) - h(\y))\leq \phi(g(\x) -  h(\y')) -  \phi'(g(\x) -  h(\y'))(h(\y) - h(\y')) +  \frac{L}{2}|h(\y) - h(\y')|^2, 
\end{align*} 
Since $h(\x)$ is convex and Lipschitz continuous, we have
\begin{align*}
h(\y)- h(\y')\geq \nabla h(\y')^{\top}(\y -\y'), \quad |h(\x) - h(\x')|\leq G_y\|\y - \y'\|
\end{align*}
Then we have
\begin{align*}
\phi(g(\x) - h(\y))\leq \phi(g(\x) -  h(\y')) -  \phi'(g(\x) -  h(\y'))\nabla h(\y')^{\top}(\y -\y') +  \frac{LG_y^2}{2}\|\y - \y'\|^2, 
\end{align*} 
which implies weak-concavity in terms of $\y$ when fixing $\x$.  

It also is notable that $f(\x, \y)$ is not necessarily smooth. This problem can be found in robust statistics~\citep{audibert2011}. 

{\bf Example 4 (An Example Satisfying MVI Condition): }
We consider
$f(\x,\y) = \frac{1}{2}\x^\top A\x+\x^\top \y-\frac{1}{2}\y^\top A\y$,  $\X=\{(x_1,x_2)\in\R^2:0\leq x_2\leq x_1\leq 1\}$, $\Y=\{(y_1,y_2)\in\R^2:0\leq y_2\leq y_1\leq 1\}$,  where  $A=\text{diag}(1,-\rho)$, $0<\rho<1$. It is not difficult to show that Assumption \ref{ass:3} holds, i.e., $D=\sqrt{2}$, the mapping $F(\x,\y)=(\partial_{\x}f(\x,y), -\partial_{\y}f(\x,\y))^\top $ is $\rho$-weakly monotone and the corresponding MVI problem has a solution $(0,0,0,0)$. In addition, $F(\x,\y)$ is Lipschitz continuous with modulus $L=1$. 

\section{Proof of Theorem~\ref{thm:meta}}
\label{sec:proofthmmeta}

Let $\E_k$ be the conditional expectation conditioning on all the stochastic events until $\bz_k$ is generated.
Let $\bar\z_k$ be the unique solution of $\text{SVI}(F_k,\Z)$ where $F_k$ is defined in Algorithm~\ref{alg:meta}. This means 
\begin{eqnarray}\label{eq:ineq1}
\exists \bar\bxi_k\in F_k(\bar\z_k) \text{ s.t. } \bar\bxi_k^{\top}(\z - \bar\z_k)\geq 0,~\forall \bz\in\Z.
\end{eqnarray}
By the assumption on $\text{ApproxSVI}(F_k, \Z, \z_k, \eta_k , T_k)$ and the fact that $\text{ApproxSVI}$ only depends on all the previous stochastic events thought $\bz_k$, we have
\begin{eqnarray}\label{eq:ineq2}
\exists \bxi_{k+1}\in F_k(\z_{k+1}) \text{ s.t. }		\E_k[\bxi_{k+1}^{\top}(\z_{k+1} - \z)]\leq \frac{c}{k+1},~\forall \bz\in\Z.
\end{eqnarray}
By the $(\gamma^{-1} - \rho)$-strong monotonicity of $F_k$, we have
\begin{eqnarray}\label{eq:ineq3}
(\gamma^{-1} - \rho)\E_k\|\z_{k+1} -\bar \z_k\|^2\leq \E_k[( \bxi_{k+1} -  \bar\bxi_k)^{\top}(\z_{k+1} - \bar\z_k)]\leq \frac{c}{k+1}
\end{eqnarray}
where the second inequality is obtained using \eqref{eq:ineq1} with $\bz=\z_{k+1}$ and using \eqref{eq:ineq2} with $\bz=\bar\z_k$.

Let $\zu$ be a solution to $\text{MVI}(F, \Z)$, meaning that  $\bxi^{\top}(\z - \zu)\geq 0$ for any $\bz\in\Z$ and any $\bxi\in F(\bz)$. Note that such a solution exists by Assumption~\ref{ass:3}.
According to the definition of $F_k(\z_{k+1})$ and the fact that $\bxi_{k+1}\in F_k(\bz_{k+1})$, we have 
$
\bxi_{k+1}-\gamma^{-1}(\bz_{k+1}-\bz_k)\in F(\bz_{k+1})
$ 
so that 
$$
(\bxi_{k+1}-\gamma^{-1}(\bz_{k+1}-\bz_k))^{\top}(\z_{k+1} - \zu)\geq 0
$$
by the definition of $\zu$. This inequality and \eqref{eq:ineq2} with $\bz=\zu$ together imply
\begin{eqnarray*}
	\E_k[(\z_k - \z_{k+1})^{\top}(\z_{k+1} - \zu)] \geq \gamma 	\E_k[\bxi_{k+1}^{\top}(\zu - \z_{k+1})]\geq -  \frac{\gamma c}{k+1}.
\end{eqnarray*}
As a result, we have
\small
\begin{eqnarray*}
	&&\E_k\|\z_{k} - \zu\|^2 \\
	&=& \E_k\|\z_k - \z_{k+1} - \z_{k+1} - \zu\|^2\\
	&  =&\E_k \|\z_k - \z_{k+1}\|^2 +\E_k \|\z_{k+1} - \zu\|^2 + 2\E_k[(\z_k - \z_{k+1})^{\top}(\z_{k+1} - \zu)]\\
	& \geq&\E_k\|\z_k - \z_{k+1}\|^2+ \E_k\|\z_{k+1} - \zu\|^2  - \frac{2\gamma c}{k+1}\\
	& =& \E_k\|\z_k - \bar\z_k \|^2 +\E_k \|\z_{k+1} - \bar\z_k\|^2 + 2\E_k[(\z_k - \bar\z_k)^{\top}(\bar\z_k - \z_{k+1} )]+\E_k \|\z_{k+1} - \zu\|^2  - \frac{2\gamma c}{k+1}\\
	& \geq& \E_k \|\z_k - \bar\z_k \|^2 + \E_k\|\z_{k+1} - \bar\z_k\|^2 - a\E_k\|\z_k -\bar\z_k\|^2 - a^{-1}\E_k\|\bar\z_k - \z_{k+1}\|^2+ \E_k\|\z_{k+1} - \zu\|^2  - \frac{2\gamma c}{k+1},
\end{eqnarray*}
\normalsize
where the last inequality is by Young's inequality and a constant $a\in(0,1)$.
Rearranging the  inequality above gives
\begin{eqnarray*}
	(1-a) \E_k\|\z_k - \bar\z_k \|^2&\leq&( \E_k\|\z_k - \zu\|^2 -  \E_k\|\z_{k+1} - \zu\|^2) +  (a^{-1}-1) \E_k\|\bar\z_k - \z_{k+1}\|^2 + \frac{2\gamma c}{k+1}\\
	& \leq&( \E_k\|\z_k - \zu\|^2 -  \E_k\|\z_{k+1} - \zu\|^2)  +\left( \frac{a^{-1}-1}{\gamma^{-1} - \rho}+2\gamma\right)\frac{c}{k+1},
\end{eqnarray*}
where the second inequality holds because of \eqref{eq:ineq3}.
Let $\theta_{-1}=0$. 
Multiplying  both sides of the inequality above by $\theta_k$, taking expectation over all random events, and taking summation over $k=0,1,\dots,K-1$, we have 
\small
\begin{eqnarray}\nonumber
&&\sum_{k=0}^{K-1}(1-a)\theta_k \E\|\z_k - \bar\z_k \|^2\\\nonumber
&\leq&\sum_{k=0}^{K-1}\theta_k(\E\|\z_k - \zu\|^2 - \E\|\z_{k+1} - \zu\|^2) + \left( \frac{a^{-1}-1}{\gamma^{-1} - \rho}+2\gamma\right)\sum_{k=0}^{K-1}\frac{c \theta_k}{k+1}\\\nonumber
&=&\sum_{k=0}^{K-1}(\theta_{k-1}\E\|\z_k - \zu\|^2 - \theta_k\E\|\z_{k+1} - \zu\|^2) + \sum_{k=0}^{K-1}(\theta_{k} - \theta_{k-1})\E\|\z_k - \zu\|^2
+\left( \frac{a^{-1}-1}{\gamma^{-1} - \rho}+2\gamma\right)\sum_{k=0}^{K-1}\frac{c \theta_k}{k+1}\\\nonumber
&=&\theta_{-1}\E \|\z_0 - \zu\|^2 - \theta_{K-1}\E\|\z_K - \zu\|^2 + (\theta_{K-1} - \theta_{-1})D^2+\left( \frac{a^{-1}-1}{\gamma^{-1} - \rho}+2\gamma\right)\sum_{k=0}^{K-1}\frac{c \theta_k}{k+1}\\\label{eq:ineq4}
&\leq &\theta_{K-1}D^2+\left( \frac{a^{-1}-1}{\gamma^{-1} - \rho}+2\gamma\right)\sum_{k=0}^{K-1}\frac{c \theta_k}{k+1}
\end{eqnarray}
\normalsize
Given that $\gamma=1/(2\rho)$ and the definition of $\tau$, by setting $a=1/2$ and dividing \eqref{eq:ineq4} by $\frac{1}{2}\sum_{k=0}^{K-1}\theta_k$, we have
\begin{eqnarray}\label{eq:ineq5}
\E[ \|\z_\tau - \bar\z_\tau \|^2]\leq\frac{2D^2\theta_{K-1}}{\sum_{k=0}^{K-1}\theta_k}+ \frac{4\sum_{k=0}^{K-1}\frac{c\theta_k}{k+1}}{\rho\sum_{k=0}^{K-1}\theta_k}.
\end{eqnarray}
Since $\alpha>0$, standard calculus yields
\begin{align*}
\sum_{k=1}^Kk^{\alpha}&\geq \int_0^{K}x^\alpha d x= \frac{1}{\alpha+1}K^{\alpha+1}\\
\sum_{k=1}^Kk^{\alpha-1}&\leq KK^{\alpha-1} = K^\alpha, \quad \text{if }\alpha\geq 1\\
\sum_{k=1}^Kk^{\alpha-1}&\leq \int_{0}^K x^{\alpha-1}d x = \frac{ K^\alpha}{\alpha}, \quad \text{if } 0<\alpha<1.
\end{align*}
Applying the fact that $\theta_k = (k+1)^\alpha$ and the above inequalities into~(\ref{eq:ineq5}), we have
\begin{eqnarray*}\label{eq:final}
	\E[ \|\z_\tau - \bar\z_\tau \|^2]\leq \frac{2D^2(\alpha+1)}{K}+ \frac{4c(\alpha+1)}{K \rho \alpha^{\mathbf{1}_{\alpha<1}}}.
\end{eqnarray*}

%
%
%
%
\section{Proof of Proposition~\ref{prop:alg1}}
\label{sec:sgdproof}
The proof is following  the standard analysis of stochastic subgradient method. By the updating steps in Algorithm~\ref{alg:SG},  for any $\z\in\Z$, we have
\begin{align*}
\bzeta(\z^{(t)})^{\top}(\z^{(t)} - \z)\leq \frac{\|\z^{(t)} - \z\|^2 -  \|\z^{(t+1)} - \z\|^2}{2\eta}    + \frac{\eta\|\bzeta(\z^{(t)}\|^2}{2}
\end{align*}
Adding the inequalities  for $t = 0,1,2,..., T-1$ leads to
\begin{align*}
\sum^{T-1}_{t=0}\bzeta(\z^{(t)})^\top (\z^{(t)} - \z) \leq \dfrac{1}{2\eta} || \z^{(0)} - \z||^2 + \dfrac{\eta}{2}\sum^{T-1}_{t=0}   || \bzeta(\z^{(t)}) ||^2
\end{align*}
Let $\tau $ be a uniform random index from $\{0,1,2,...,T-1\}$. Using the previous inequality, we can show that
\begin{align*}
\mathbb{E}[(\bxi^{(\tau)})^\top (\z^{(\tau)} - \z)] &\leq \dfrac{|| \z^{(0)} - \z||^2}{2 \eta T} + \dfrac{\eta}{2} \mathbb{E}[  || \bzeta(\z^{(\tau)}) ||^2 ] \\ 
&\leq \dfrac{D^2}{2\eta T} + \dfrac{\eta G_k^2}{2}.
\end{align*}

\section{Proof of Proposition~\ref{prop:gd}: Part I for GD}
\label{sec:gdproof}
We first show that $\|\z^{(t)} - \w_*\|$ converges to zero linearly where $\w_*$ is a solution to $\text{SVI}(F_k, \Z)$. 
The proof is standard and can be found in~\cite{1078-0947Nesterov}. We include it here only for the completeness.
The definition of $\w_*$ implies $\w_* = \text{Proj}_{\Z}(\w_* - \eta F_k(\w_*))$. Using the fact that $\eta = \mu/(2L^2)$ and the non-expansion property of $\text{Proj}_{\Z}(\cdot)$, we have 
\begin{align*}
\|\z^{(t+1)} - \w_*\|^2 &\leq \|\z^{(t)} - \eta F_k(\z^{(t)}) - \w_* + \eta F_k(\w_*)\|^2\\
&=\|\z^{(t)}  - \w_*\|^2 -2\eta ( F_k(\z^{(t)}) - F_k(\w_*))^{\top}(\z^{(t)} - \w_*) + \eta^2 \|F_k(\z^{(t)}) - F_k(\w_*)\|^2\\
&\leq (1 - 2\eta\mu + \eta^2 L^2)\|\z^{(t)} -\w_* \|^2 = (1 - 3\mu^2/(4L^2))\|\z^{(t)} -\w_* \|^2\\
&\leq\exp(-\frac{3}{4\beta^2}) \|\z^{(t)} -\w_* \|^2
\end{align*}
where the second inequality holds because of the $\mu$-strong monotonicity and $L$-Lipschitz continuity of $F_k$. Applying this inequality for $t=0,1,\dots,$ gives 
\begin{align}
\label{eqnkey0}
\|\z^{(t+1)} - \w_*\|^2\leq \exp(-\frac{3(t+1)}{4\beta^2}) \|\z^{(0)} -\w_* \|^2.
\end{align}
Next, we prove that $\max_{\z\in\Z}F_k(\z^{(t+1)})^{\top}(\z^{(t+1)} - \z)\leq \|\z^{(t)} - \w_*\|^2$. The main idea of this proof is originally 
from~\cite{DBLP:journals/coap/DangL15}. We need to introduce
\begin{align*}
\phi_\eta(\z) := F_k(\z) - F_k(\z_+)+ \frac{1}{\eta}[\z_+ - \z],
\end{align*}
where $\z_+ = \text{Proj}_{\Z}(\z - \eta F_k(\z))$. Then we have $\|\phi_\eta(\z)\|\leq (L+\eta^{-1})\|\z - \z_+\| $ and thus
\begin{align}
\nonumber
\max_{\z\in\Z}F_k(\z^{(t+1)})^{\top}(\z^{(t+1)} - \z) &=  \max_{\z\in\Z}(F_k(\z^{(t+1)}) + \phi_\eta(\z^{(t)}) - \phi_\eta(\z^{(t)}))^{\top}(\z^{(t+1)} - \z)\\\label{eqnkey2}
&\leq \max_{\z\in\Z} -\phi_\eta(\z^{(t)})^{\top}(\z^{(t)} - \z)\leq D(L+\eta^{-1})\|\z^{(t)} - \z^{(t+1)}\|,
\end{align}
where the first inequality uses the optimality condition of $\z^{(t+1)} = \arg\min_{\z\in\Z}\|\z - (\z^{(t)} - \eta F_k(\z^{(t)}))\|^2$ that says
$$
\left[(\z^{(t+1)}  - \z^{(t)})/\eta +  F_k(\z^{(t)}))\right]^\top(\z-\z^{(t+1)})=\left[F_k(\z^{(t+1)}) +\phi_\eta(\z^{(t)})\right]^\top(\z-\z^{(t+1)})\geq0,\quad\forall\z\in\Z.
$$ 

To continue, we define $\y^{(t+1)} = \arg\min_{\z\in\Z}\|\z - (\z^{(t)} - \eta F_k(\z^{(t+1)}))\|^2$. According to Lemma 5 in~\cite{DBLP:journals/coap/DangL15}, we have
\begin{align}\label{eqnkey}
(1 - L^2\eta^2)\frac{\|\z^{(t)}- \z^{(t+1)}\|^2}{2}\leq \frac{\|\z^{(t)} - \w_*\|^2 - \|\y^{(t+1)} - \w_*\|^2 }{2}.
\end{align}
Plugging the value of $\eta$ into \eqref{eqnkey} gives
\begin{align}
\label{eqnkey1}
\|\z^{(t)}- \z^{(t+1)}\|\leq \sqrt{\frac{1}{(1 - 0.25\beta^{-2})}}\|\z^{(t)} - \w_*\|
\end{align}
Finally, using \eqref{eqnkey0}, \eqref{eqnkey2} and \eqref{eqnkey1} together, we can show that
\begin{align*}
\max_{\z\in\Z}F_k(\z^{(t+1)})^{\top}(\z^{(t+1)} - \z) &\leq DL(1+ 2\beta)\sqrt{\frac{1}{(1 - 0.25\beta^{-2})}}\|\z^{(t)} - \w_*\|\\
&\leq DL(1+ 2\beta)\sqrt{\frac{\beta^2}{\beta^2 - 0.25}}\exp(-\frac{3t}{8\beta^2}) \|\z^{(0)} -\w_* \|\\
&\leq DL(1+2\beta)\sqrt{\frac{\beta^2}{\beta^2 - 0.25\beta^2}}\exp(-\frac{3t}{8\beta^2}) \|\z^{(0)} -\w_* \|\\
&\leq 4D^2L\beta\exp(-\frac{t}{4\beta^2}), 
\end{align*}
where we use the fact $\beta\geq 1$ in the third and the fourth inequalities. The first conclusion is then proved by setting $t=T-1$.

Since $F_k$ is $\mu$-strongly monotone, we have 
\begin{align*}
\mu ||\z^{(T)} - \w_*  ||^2 \leq (F_k(\z^{(T)}) - F_k(\w_* ))^\top ( \z^{(T)} - \w_* ) \leq (F(\z^{(T)}))^\top ( \z^{(T)} - \w_* )
\end{align*}
where $\w_*$ denotes a solution to $\text{SVI}(F_k, \Z)$, which yields the second conclusion. 

\section{Proof of Proposition~\ref{prop:gd}: Part II for EG}
It is worth mentioning that the linear convergence of the extragradient for strongly monotone VI  in terms of distance to the optimal solution is also proved in~\citep{sim18ganvi}. For completeness, we present a proof here.  We can use the following lemma to prove the linear convergence. 
\begin{lemma}[\textbf{Lemma 3.1}~\cite{nemirovski-2005-prox}]\label{lem:6}
	Let $\omega(\z)$ be a  $\alpha$-strongly convex function with respect to the norm $\|\cdot\|$, whose dual norm is denoted by $\|\cdot\|_*$,  and $D(\x,\z) = \omega(\x)- (\omega(\z) + (\x-\z)^{\top}\omega'(\z))$ be the Bregman distance induced by function $\omega(\x)$. Let $Z$ be a convex compact set, and $U\subseteq Z$ be convex and closed.  Let $\z\in Z$, $\gamma>0$, Consider the points,
	\begin{align}
	\x &= \arg\min_{\u\in U} \gamma\u^{\top}\xi + D(\u, \z)\label{eqn:project1},\\
	\z_+&=\arg\min_{\u\in U} \gamma\u^{\top}\zeta + D(\u,\z),\label{eqn:project2}
	\end{align}
	then for any $\u\in U$, we have
	\begin{align}\label{eqn:ineq}
	\gamma\zeta^{\top}(\x-\u)\leq  D(\u,\z) - D(\u, \z_+) + \frac{\gamma^2}{\alpha}\|\xi-\zeta\|_*^2 - \frac{\alpha}{2}[\|\x-\z\|^2 + \|\x-\z_+\|^2].
	\end{align}
\end{lemma}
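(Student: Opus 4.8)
The plan is to run the standard prox-mapping argument underlying the analysis of the mirror-prox / extragradient scheme, which rests on two ingredients: the first-order optimality conditions of the two minimization problems defining $\x$ and $\z_+$, together with the three-point identity for the Bregman distance $D$. First I would record the optimality conditions. Since $\nabla_{\bw} D(\bw,\z)=\omega'(\bw)-\omega'(\z)$, the minimizer $\x$ of $\gamma\bw^\top\xi+D(\bw,\z)$ over $U$ obeys $\langle \gamma\xi+\omega'(\x)-\omega'(\z),\,\bw-\x\rangle\ge 0$ for every $\bw\in U$, and, identically, $\z_+$ obeys $\langle \gamma\zeta+\omega'(\z_+)-\omega'(\z),\,\bw-\z_+\rangle\ge 0$ for every $\bw\in U$.

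Next I would establish the three-point identity
\[
\langle \omega'(\ba)-\omega'(\bb),\,\bc-\ba\rangle = D(\bc,\bb)-D(\bc,\ba)-D(\ba,\bb),
\]
which follows by expanding the three Bregman distances on the right and cancelling the $\omega$-values and the linear terms.

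The heart of the proof is the decomposition
\[
\gamma\zeta^\top(\x-\u)=\gamma\zeta^\top(\z_+-\u)+\gamma\xi^\top(\x-\z_+)+\gamma(\zeta-\xi)^\top(\x-\z_+).
\]
For the first term I would plug $\bw=\u$ into the $\z_+$-optimality condition and apply the identity with $(\ba,\bb,\bc)=(\z_+,\z,\u)$, yielding $\gamma\zeta^\top(\z_+-\u)\le D(\u,\z)-D(\u,\z_+)-D(\z_+,\z)$. For the second term I would plug $\bw=\z_+$ into the $\x$-optimality condition and apply the identity with $(\ba,\bb,\bc)=(\x,\z,\z_+)$, yielding $\gamma\xi^\top(\x-\z_+)\le D(\z_+,\z)-D(\z_+,\x)-D(\x,\z)$. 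The crucial cancellation is that the two $D(\z_+,\z)$ terms annihilate, leaving $D(\u,\z)-D(\u,\z_+)-D(\z_+,\x)-D(\x,\z)$.

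Finally I would control the cross term by Fenchel--Young, bounding $\gamma(\zeta-\xi)^\top(\x-\z_+)$ by $\frac{\gamma^2}{\alpha}\|\xi-\zeta\|_*^2$ plus a multiple of $\|\x-\z_+\|^2$, and invoke the $\alpha$-strong convexity of $\omega$ in the form $D(\ba,\bb)\ge\frac{\alpha}{2}\|\ba-\bb\|^2$ to convert $-D(\x,\z)$ and $-D(\z_+,\x)$ into the negative quadratics $-\frac{\alpha}{2}\|\x-\z\|^2$ and $-\frac{\alpha}{2}\|\z_+-\x\|^2$. Collecting terms then gives the claimed inequality. I expect the main obstacle to be pure bookkeeping: choosing the test point $\bw$ in each optimality condition so that the $D(\z_+,\z)$ terms cancel, and tuning the Fenchel--Young split so that the residual positive multiple of $\|\x-\z_+\|^2$ is absorbed by the strong-convexity term $-D(\z_+,\x)$, leaving the stated coefficients. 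No deeper difficulty arises, since this is a known lemma reproduced here only for completeness.
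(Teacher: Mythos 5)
The paper does not actually prove this lemma (it is imported from Nemirovski's prox-method paper), so your attempt can only be judged against the classical argument, and your skeleton \emph{is} that argument: the two variational optimality conditions, the three-point identity for $D$, the decomposition $\gamma\zeta^{\top}(\x-\u)=\gamma\zeta^{\top}(\z_+-\u)+\gamma\xi^{\top}(\x-\z_+)+\gamma(\zeta-\xi)^{\top}(\x-\z_+)$, and the cancellation of the two $D(\z_+,\z)$ terms are all correct and yield
$\gamma\zeta^{\top}(\x-\u)\leq D(\u,\z)-D(\u,\z_+)-D(\z_+,\x)-D(\x,\z)+\gamma(\zeta-\xi)^{\top}(\x-\z_+)$.

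The gap is in your final step. Fenchel--Young with parameter $\lambda$ gives $\gamma(\zeta-\xi)^{\top}(\x-\z_+)\leq \frac{\gamma^2}{2\lambda}\|\xi-\zeta\|_*^2+\frac{\lambda}{2}\|\x-\z_+\|^2$; to land on the stated coefficient $\frac{\gamma^2}{\alpha}$ you must take $\lambda=\alpha/2$, and after absorbing the residual $\frac{\alpha}{4}\|\x-\z_+\|^2$ into $-D(\z_+,\x)\leq-\frac{\alpha}{2}\|\x-\z_+\|^2$ you are left with $-\frac{\alpha}{4}\|\x-\z_+\|^2$, not the claimed $-\frac{\alpha}{2}\|\x-\z_+\|^2$. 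No tuning of $\lambda$ repairs this: retaining the full $-\frac{\alpha}{2}\|\x-\z_+\|^2$ forces $\lambda=0$, which blows up the other term. The correct device is not Young's inequality but the Lipschitz property of the prox-mapping in its linear argument: plug $\bw=\z_+$ into the optimality condition for $\x$ and $\bw=\x$ into the optimality condition for $\z_+$; adding the two inequalities cancels $\omega'(\z)$ and gives $\langle\omega'(\x)-\omega'(\z_+),\x-\z_+\rangle\leq\gamma\langle\xi-\zeta,\z_+-\x\rangle$, whence $\alpha\|\x-\z_+\|^2\leq\gamma\|\xi-\zeta\|_*\|\x-\z_+\|$ by strong convexity, i.e.\ $\|\x-\z_+\|\leq\frac{\gamma}{\alpha}\|\xi-\zeta\|_*$. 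Then $\gamma(\zeta-\xi)^{\top}(\x-\z_+)\leq\gamma\|\xi-\zeta\|_*\|\x-\z_+\|\leq\frac{\gamma^2}{\alpha}\|\xi-\zeta\|_*^2$ with no quadratic residue, so both $-D(\z_+,\x)$ and $-D(\x,\z)$ survive intact and convert to $-\frac{\alpha}{2}[\|\x-\z_+\|^2+\|\x-\z\|^2]$, giving exactly \eqref{eqn:ineq}. (Your weaker constant would still suffice for the paper's downstream extragradient analysis after adjusting constants, but it does not prove the lemma as stated.)
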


\begin{proof}
	Following the Lemma~\ref{lem:6}, we can easily have the following equality for the extragrdient method 
	\begin{align*}
	2\eta F(\z^{(t)})^{\top}(\z^{(t)}-\w_*)&\leq \|\w_* - \w^{(t)}\|^2 - \|\w_* - \w^{(t+1)}\|^2 + 2\eta^2\|F(\z^{(t)}- F(\w^{(t)})\|^2\\
	&- [\|\z -\w^{(t+1)}\|^2 + \|\z-\w^{(t)}\|^2].
	\end{align*}
	By the strong monotonicity, we have
	\begin{align*}
	\frac{1}{2}\mu\|\w^{(t)}-\w_*\|^2-  \mu\|\z^{(t)}-\w^{(t)}\|^2&\leq    \mu\|\z^{(t)}-\w_*\|^2\leq (F(\z^{(t)}) - F(\w_*))^{\top}(\z^{(t)}-\w_*)\\
	&\leq F(\z^{(t)})^{\top}(\z^{(t)}-\w_*).
	\end{align*}
	Combining the above inequalities, we have
	
	\begin{align*}
	\eta\mu \|\w^{(t)}-\w_*\|^2-  2\eta\mu\|\z^{(t)}-\w^{(t)}\|^2&\leq \|\w_* - \w^{(t)}\|^2 - \|\w_* - \w^{(t+1)}\|^2 + 2\eta^2\|F(\z^{(t)}- F(\w^{(t)})\|^2\\
	&- [\|\z -\w^{(t+1)}\|^2 + \|\z-\w^{(t)}\|^2].
	\end{align*}
	Reorganizing the terms we have
	\begin{align*}
	\|\w_* - \w^{(t+1)}\|^2&\leq (1-\eta\mu) \|\w^{(t)}-\w_*\|^2 +   2\eta\mu\|\z^{(t)}-\w^{(t)}\|^2  + 2\eta^2\|F(\z^{(t)}- F(\w^{(t)})\|^2\\
	&- [\|\z -\w^{(t+1)}\|^2 + \|\z-\w^{(t)}\|^2]\\
	&\leq (1-\eta\mu) \|\w^{(t)}-\w_*\|^2 + (2\eta^2L^2 + 2\eta \mu) \|\z^{(t)}-\w^{(t)}\|^2 - \|\z^{(t)}-\w^{(t)}\|^2.
	\end{align*}
	By setting $\eta= \frac{1}{4L}$, we have $2\eta^2L^2 + 2\eta \mu \leq 1$, and then 
	\begin{align*}
	\| \w^{(t+1)} - \w_* \|^2&\leq (1-\eta\mu) \|\w^{(t)}-\w_*\|^2\leq \left(1-\frac{\mu}{4L}\right)^{t+1}\|\w^{(0)}-\w_*\|^2.
	\end{align*}
	The conclusion follows by applying Lemma~\ref{lem:prom}.
	
\end{proof}

%
%

\section{Other Methods for Solving Strongly Monotone SVI with a Lipschitz Continuous Mapping}
\label{sec:otherfom}

\subsection{Using the Nesterov's Accelerated Method}We first present the Nesterov's accelerated method~\cite{1078-0947Nesterov} for solving strongly monotone SVI in Algorithm~\ref{alg:NA} and show that it could achieve smaller complexity than the GD method when the condition number $L/\rho$ is large. 
\begin{proposition}\label{prop:na}When $F(\bz)$ is single-valued and $L$-Lipschitz continuous.
	and  is $\mu$-strongly monotone,  Algorithm~\ref{alg:NA} guarantees that for any $\z\in\Z$
	\begin{eqnarray}\label{eqn:ineq2}
	\max_{\z\in\Z}F(\bar \z)^{\top}(\bar\z-\z)\leq 6\sqrt{M} DL^2/\mu^{3/2}\exp(-T/2(\beta+1))
	\end{eqnarray}
	where $\beta = L/\mu$. 
	In addition, we have
	\[
	\mu \|\bar\z - \w_*\|^2 \leq 6\sqrt{M} DL^2/\mu^{3/2}\exp(-T/2(\beta+1))
	\]
	where $\w_*$ denotes a solution to $\text{SVI}(F, \Z)$, and $M = \max_{\z, \w\in\Z}F(\w)^{\top}(\z - \w) + \frac{\mu}{2}\|\z -\w\|^2$.
	
\end{proposition}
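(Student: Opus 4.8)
The plan is to follow the same two-step template already used for the gradient and extragradient methods: first establish a linear convergence rate for the inner iterate $\w^{(T)}$ produced by Algorithm~\ref{alg:NA}, measured by the distance $\|\w^{(T)} - \w_*\|$ to the unique solution $\w_*$ of $\text{SVI}(F,\Z)$ (unique because $F$ is $\mu$-strongly monotone), and then convert this point-convergence bound into a bound on the merit function $\max_{\z\in\Z}F(\bar\z)^{\top}(\bar\z - \z)$ by invoking Lemma~\ref{lem:prom} through the final projection step $\bar\z = \text{Proj}_{\Z}(\w^{(T)} - F(\w^{(T)})/(\sqrt{2}L))$ that is built into the algorithm's output.

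For the first step I would invoke the convergence analysis of Nesterov's accelerated scheme for strongly monotone variational inequalities from \cite{1078-0947Nesterov}. For a $\mu$-strongly monotone, $L$-Lipschitz single-valued mapping, that analysis controls a merit/gap quantity whose size is governed by the constant $M = \max_{\z,\w\in\Z}[F(\w)^{\top}(\z - \w) + \frac{\mu}{2}\|\z - \w\|^2]$ (which serves as a uniform bound on the starting gap), and it yields a linear rate governed by the inverse condition number, namely of order $\exp(-T/(2(\beta+1)))$ with $\beta = L/\mu$. I would then pass from this merit quantity to squared distance using $\mu$-strong monotonicity together with the $\text{SVI}$ optimality of $\w_*$: since $F(\w_*)^{\top}(\w^{(T)} - \w_*)\geq 0$, one has $\mu\|\w^{(T)} - \w_*\|^2 \leq (F(\w^{(T)}) - F(\w_*))^{\top}(\w^{(T)} - \w_*) \leq \max_{\z\in\Z}F(\w^{(T)})^{\top}(\w^{(T)} - \z)$. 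Taking a square root converts the rate into a distance bound of the form $\|\w^{(T)} - \w_*\| \leq C\sqrt{M}\,(L/\mu^{3/2})\exp(-T/(2(\beta+1)))$, where the polynomial prefactor is collected conservatively rather than sharply.

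For the second step, Lemma~\ref{lem:prom} applied to the final projection step gives $\max_{\z\in\Z}F(\bar\z)^{\top}(\bar\z - \z) \leq DL(2+\sqrt{2})\|\w^{(T)} - \w_*\|$, and substituting the distance bound from the previous paragraph yields the first claimed inequality once the absolute constants are absorbed into the stated factor $6\sqrt{M}DL^2/\mu^{3/2}$. The second (\textbf{In addition}) inequality requires no further iteration analysis: exactly as in the first step, $\mu$-strong monotonicity and the optimality of $\w_*$ give $\mu\|\bar\z - \w_*\|^2 \leq (F(\bar\z) - F(\w_*))^{\top}(\bar\z - \w_*) \leq F(\bar\z)^{\top}(\bar\z - \w_*) \leq \max_{\z\in\Z}F(\bar\z)^{\top}(\bar\z - \z)$, so the bound on $\mu\|\bar\z - \w_*\|^2$ follows verbatim from the gap bound just proved.

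The main obstacle is the first step. Unlike the gradient and extragradient methods, whose contractions are one-line projection arguments, Nesterov's accelerated method carries momentum and a dual-averaging component, so its linear rate for strongly monotone VIs rests on tracking an estimate sequence or a Lyapunov potential; the delicate parts are expressing the starting potential through $M$ and bridging from Nesterov's native convergence measure to the distance $\|\w^{(T)} - \w_*\|$ that Lemma~\ref{lem:prom} consumes. Once that rate is established with the correct $\exp(-T/(2(\beta+1)))$ dependence, the remaining steps are routine applications of strong monotonicity and Lemma~\ref{lem:prom}.
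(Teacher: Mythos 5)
Your proposal matches the paper's proof essentially step for step: the paper likewise cites Nesterov's Theorem~3 for the linear rate $\|\zh_T - \w_*\|^2\leq 2M\beta^2/\mu\,\exp(-T/(\beta+1))$, feeds the resulting distance bound into Lemma~\ref{lem:prom} via the final projection step, and obtains the second inequality from $\mu$-strong monotonicity exactly as you describe. The only cosmetic discrepancy is that the point being projected in Algorithm~\ref{alg:NA} is the weighted average $\zh_T=\frac{1}{S_T}\sum_{t=0}^{T}\lambda_t\z^{(t)}$ rather than $\w^{(T)}$, so the distance bound should be stated for $\zh_T$.
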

\begin{proof}
	The proof is following. First, according to Theorem 3 in~\cite{1078-0947Nesterov}, we have
	\begin{align*}
	\|\zh_T - \w_*\|^2\leq 2M\beta^2/\mu \exp(-T/(\beta+1))
	\end{align*}
	Following Lemma~\ref{lem:prom}, we have
	\begin{align*}
	\max_{\z\in\Z}F(\bar\z)^{\top}(\bar\z - \z) &\leq DL(2+ \sqrt{2})\|\zh_T - \w_*\|\\
	&\leq 6\sqrt{M} DL^2/\mu^{3/2}\exp(-T/2(\beta+1))\\
	\end{align*}
	
\end{proof}
\begin{algorithm}[tb]
	\caption{Nesterov's Accelerated  Method for $\text{SVI}(F, Z)$: NA$(F,\Z,\bw^{(0)}, T)$}\label{alg:NA}
	\begin{algorithmic}[1]
		\STATE \textbf{Input:} $\mu-$Strongly Monotone and $L$-Lipschitz continuous Mapping $F$, set $\Z$, $\bw^{(0)}\in\mathcal{Z}$,  and an integer $T\geq 1$.  
		\STATE Initialize $\lambda_0= 1$ and $S_0 = 1$, $\beta = L/\mu$, $\eta = 1/(\sqrt{2}L)$
		\FOR{$t=0,..., T-1$}
		\STATE $\w^{(t)}= \arg\max_{\x\in\Z}\sum_{k=0}^t\lambda_k(F(\z^{(k)})^{\top}(\z^{(k)} - \x) - \frac{\mu}{2}\|\x - \z^{(k)}\|^2)$ 
		\STATE $\z^{(t+1)}= \arg\max_{\x\in\Z}F(\w^{(k)})^{\top}(\w^{(t)} - \x) - \frac{L}{2}\|\x - \w^{(t)}\|^2$ 
		\STATE $\lambda_{t+1} = \frac{S_t}{\beta}$
		\STATE $S_{t+1} = S_{t} + \lambda_{t+1}$
		\ENDFOR
		\STATE Compute $\zh_T =\frac{1}{S_T}\sum_{t=0}^{T}\lambda_t\z^{(t)}$
		\STATE \textbf{Output:} $ \bar\z= \text{Proj}_{\Z}\left(\zh_T - \eta F(\zh_T)\right)$
	\end{algorithmic}
\end{algorithm}

\begin{corollary}\label{thm:na}
	Suppose Assumption~\ref{ass:3}  holds and $F(\bz)$ is single-valued and $L$-Lipschitz continuous,  and Algorithm~\ref{alg:NA} is used to implement $\text{ApproxSVI}$. Algorithm~\ref{alg:meta} with $\gamma=1/(2\rho)$, $\theta_k =(k+1)^\alpha$ with $\alpha>1$,  $ T_k= 2(L/\rho+1)\log(6(k+1)L^{2}M^{1/2}/(D\rho^{5/2}))$, and a total of stages $K=6D^2(\alpha+1)/\epsilon^2$  guarantees
	\begin{align}\label{eqn:fc3}
	\E[ \|\z_\tau - \bar\z_\tau \|^2]\leq \epsilon^2,
	\end{align}
	and
	\begin{align}\label{eqn:fc4}
	\E[\text{dist}^2(0, \partial (f(\bar\u_\tau, \bar\v_\tau)+ 1_{\Z}(\bar\u_\tau, \bar\v_\tau))]\leq \epsilon^2/\gamma^2
	\end{align}
	where $\bar\z_\tau$ is the solution to $\text{SVI}(F^\gamma_{\z_\tau}, \Z)$. The total iteration complexity is $\widetilde O(D^2L/(\rho\epsilon^2))$.
\end{corollary}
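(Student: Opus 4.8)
The plan is to instantiate the generic convergence result of Theorem~\ref{thm:meta} with Nesterov's accelerated method (Proposition~\ref{prop:na}) playing the role of \textnormal{ApproxSVI}, and then to convert the resulting bound on $\E[\|\z_\tau - \bar\z_\tau\|^2]$ into the stationarity measure via Lemma~\ref{lem:zbw2}. This is exactly the template used in Corollaries~\ref{thm:gd}, \ref{thm:eg} and \ref{thm:svrg}; the only work specific to the Nesterov case is reading off the strong-monotonicity and Lipschitz constants of each subproblem and choosing $T_k$ so that the accelerated rate meets the accuracy requirement $c/(k+1)$.

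First I would verify that every subproblem $\text{SVI}(F_k,\Z)$, with $F_k=F^\gamma_{\z_k}=F+\gamma^{-1}(\cdot-\z_k)$, satisfies the hypotheses of Proposition~\ref{prop:na}. Taking $\gamma=1/(2\rho)$, Lemma~\ref{lem:PPM} gives that $F_k$ is $\mu$-strongly monotone with $\mu=\gamma^{-1}-\rho=\rho$; since $F$ is $L$-Lipschitz and the added term $2\rho(\cdot-\z_k)$ is $2\rho$-Lipschitz, $F_k$ is $L_k$-Lipschitz with $L_k=L+2\rho$, so $\beta_k=L_k/\mu=(L+2\rho)/\rho$. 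Because Algorithm~\ref{alg:NA} is deterministic, conditioning on $\z_k$ makes its output $\z_{k+1}=\bar\z$ deterministic and $\xi_{k+1}:=F_k(\z_{k+1})$ the single element of $F_k(\z_{k+1})$; hence the conditional-expectation condition \eqref{eqn:sa_converge} collapses to the deterministic restricted-gap bound $\max_{\z\in\Z}\xi_{k+1}^\top(\z_{k+1}-\z)\le 6\sqrt{M}\,DL_k^2/\mu^{3/2}\exp(-T_k/(2(\beta_k+1)))$ supplied by Proposition~\ref{prop:na}.

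Next I would force this right-hand side below $c/(k+1)$ with $c=D^2\rho$. Solving for $T_k$ yields $T_k=2(\beta_k+1)\log\!\big(6\sqrt{M}\,L_k^2(k+1)/(D\rho^{5/2})\big)$, which, after absorbing the $O(1)$ discrepancies between $L_k$ and $L$ inside the logarithm and between $\beta_k+1$ and $L/\rho+1$ into the constants, matches the stated $T_k$. With $c=D^2\rho$, $\gamma=1/(2\rho)$ and $K=6D^2(\alpha+1)/\epsilon^2$, Theorem~\ref{thm:meta} gives $\E[\|\z_\tau-\bar\z_\tau\|^2]\le \frac{2D^2(\alpha+1)}{K}+\frac{4c(\alpha+1)}{K\rho}=\frac{\epsilon^2}{3}+\frac{2\epsilon^2}{3}=\epsilon^2$, which is the first claimed bound. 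Applying Lemma~\ref{lem:zbw2} at $\w=\z_\tau$ (whose $\text{SVI}(F^\gamma_{\z_\tau},\Z)$ solution is $\bar\z_\tau$), then squaring and taking expectations, gives $\E[\text{dist}^2(0,\partial(f+\mathbf{1}_\Z))]\le \E[\|\z_\tau-\bar\z_\tau\|^2]/\gamma^2\le \epsilon^2/\gamma^2$, the second claimed bound. For the complexity, each $T_k=\widetilde O(\beta_k)=\widetilde O(L/\rho)$ (the logarithm being absorbed into $\widetilde O$), and summing over the $K=O(D^2/\epsilon^2)$ stages gives $\sum_{k=0}^{K-1}T_k=\widetilde O(D^2L/(\rho\epsilon^2))$.

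The main point requiring care is the translation between the deterministic restricted gap produced by the accelerated method and the quantity $\E[\xi_{k+1}^\top(\z_{k+1}-\z)]$ controlled by Theorem~\ref{thm:meta}: this hinges on the final projection step $\bar\z=\text{Proj}_\Z(\zh_T-\eta F(\zh_T))$ of Algorithm~\ref{alg:NA}, justified by Lemma~\ref{lem:prom}, which upgrades the point-convergence rate of Nesterov's method into a gap of precisely the required form. I would also confirm that $M$ may be taken as a single constant uniform in $k$: since $F_k(\w)=F(\w)+2\rho(\w-\z_k)$ and $\|\w-\z_k\|\le D$ on the compact set $\Z$, the family $\{F_k\}$ is uniformly bounded on $\Z$, so $\max_{\z,\w\in\Z}F_k(\w)^\top(\z-\w)+\tfrac{\mu}{2}\|\z-\w\|^2$ admits a finite bound independent of $k$, and since $M$ enters $T_k$ only through $\log\sqrt{M}$ any residual $k$-dependence is harmless.
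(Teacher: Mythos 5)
Your proposal is correct and follows exactly the route the paper intends for this corollary (which it leaves implicit, as with Corollaries~\ref{thm:gd} and \ref{thm:eg}): instantiate Theorem~\ref{thm:meta} with $\mu=\gamma^{-1}-\rho=\rho$, $L_k=L+2\rho$, use Proposition~\ref{prop:na} to meet condition~\eqref{eqn:sa_converge} with $c=D^2\rho$, and convert via Lemma~\ref{lem:zbw2}; your choice of $c$ is confirmed by the fact that it reproduces the $D\rho^{5/2}$ factor inside the stated $T_k$. Your remarks on the uniform bound for $M$ across stages and on the deterministic collapse of the conditional expectation are correct and actually more careful than the paper's own treatment.
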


{\bf Remark: } The total iteration complexity for finding a nearly $\epsilon$-stationary solution of the corresponding min-max saddle-point problem  such that $\E[ \|\z_\tau - \bar\z_\tau \|^2]\leq (\gamma\epsilon)^2$  is then $\widetilde O(L\rho/\epsilon^2)$. By comparing to the result in Corollary~\ref{thm:gd}, the above result of using Nesterov's accelerated method for solving the strongly monotone subproblems is better by a factor of $L/\rho$.  

\section{Solving Weakly Monotone SVI}\label{sec:last}
As we mentioned in the introduction, the proposed algorithms can be used for solving a more general SVI problem. We present and discuss the results in this section. 
Recall the SVI problem defined in \eqref{eq:SVI} and the MVI problem defined  in \eqref{eq:MVI}, where $F$ is not necessarily pertained to any $f(\x, \y)$. 
In the literature of VI~\cite{Nemirovski2015InformationbasedCO}, a solution $\z^*$ that satisfies~(\ref{eq:SVI}) is also called strong solution, and a solution $\z_*$ that satisfies~(\ref{eq:MVI}) is called a weak solution. When $F$ is monotone, finding a solution for $\text{SVI}(F,\Z)$ is typically a tractable problem~\cite{Nemirovski2015InformationbasedCO}. 
In order to make a SVI problem with a non-monotone set-valued mapping $F$ tractable, we impose the same assumptions as in Assumption~\ref{ass:3}. Similar (or stronger) assumptions have been used in previous studies for solving non-monotone SVI. 

When applying an iterative numerical algorithm to solve $\text{SVI}(F,\Z)$, it is generally hard to guarantee an exact solution for $\text{SVI}(F,\Z)$ after a finite number of iterations. Therefore, an alternative goal is to find an $\epsilon$-\emph{gap solution} for $\text{SVI}(F,\Z)$, namely, a solution $\bar\bz$ such that 
\begin{align}\label{eqn:epsilonstr}
\exists \bar\bxi\in F(\bar\bz)\text{ s.t. }\max_{\bz\in\Z}\langle \bar\bxi, \bar\bz - \bz\rangle\leq \epsilon.
\end{align}
However, without additional assumption on $F$, finding an $\epsilon$-\emph{gap solution} for $\text{SVI}(F,\Z)$ in finite iterations can be also challenging even if $F$ is monotone. For example, consider the SVI problem of finding $z^*\in[-1,1]$ such that $\left\langle \xi^*, z-z^*\right\rangle\geq 0$ for some $\xi^*\in \partial|z^*|$ and all $z\in[-1,1]$, which is associated to the convex minimization $\min_{z\in[-1,1]}|z|$ and has a solution at $0$. Hence,  if $\bar z$ is very close to $0$ but not $0$, we always have $\langle \bar\xi, \bar z - z\rangle\geq 1$ and $|\bar\xi|=1$ for any $\bar\xi\in\partial|\bar z|$ and $z=-\text{sign}(\bar z)$. 
To address this issue, we introduce the notion of nearly $(\epsilon,\delta)$-gap solution to $\text{SVI}(F,\Z)$. 
\begin{definition}
	\label{def:nearlygap}
	A point $\w\in\Z$ is called a nearly $(\epsilon,\delta)$-gap solution to $\text{SVI}(F,\Z)$ for $\epsilon>0$ and $\delta>0$ if there exists $\bar\w\in\Z$ and such that 
	\begin{align*}
	\|\w - \bar\w\|\leq \delta, \quad \exists \bar\bxi\in F(\bar\w)\text{ s.t. }\max_{\bz\in\Z}\langle \bar\bxi, \bar\w - \bz\rangle\leq \epsilon.
	\end{align*}
\end{definition}
If $\delta = O(\epsilon)$, we simply call this solution as nearly $\epsilon$-gap solution. 
The lemma below implies that the proposed algorithms can find a nearly $\epsilon$-gap solution for a SVI problem.  
\begin{lemma}\label{lem:zbw}
	Let $F_{\bw}^\gamma$ be defined in \eqref{eq:ppm} for $0<\gamma<\rho^{-1}$ and $\bw\in\Z$ and $\bar\bw$ be the solution to $\text{SVI}(F_{\bw}^\gamma,\Z)$.  We have 
	\begin{eqnarray}
	\label{eq:zbw}
	\exists \bar\bxi\in F(\bar\bw)\text{ s.t. }\max_{\z\in\Z}\langle \bar\bxi, \bar\bw - \bz\rangle\leq  \frac{D}{\gamma}\|\bw-\bar\bw\|.
	\end{eqnarray} 
\end{lemma}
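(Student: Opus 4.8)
The plan is to read the bound straight off the Stampacchia optimality condition for the strongly monotone subproblem, and then convert it into a statement about the original mapping $F$ alone by Cauchy--Schwarz together with the diameter bound in Assumption~\ref{ass:3}(i).

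First I would use that $\bar\bw$ solves $\text{SVI}(F_{\bw}^\gamma,\Z)$ (this solution is unique by Lemma~\ref{lem:PPM}). By the definition \eqref{eq:SVI}, there exists $\bar\bxi'\in F_{\bw}^\gamma(\bar\bw)$ with $\langle \bar\bxi',\bz-\bar\bw\rangle\geq 0$ for all $\bz\in\Z$. Since $F_{\bw}^\gamma(\bar\bw)=F(\bar\bw)+\frac{1}{\gamma}(\bar\bw-\bw)$, I can write $\bar\bxi'=\bar\bxi+\frac{1}{\gamma}(\bar\bw-\bw)$ for some $\bar\bxi\in F(\bar\bw)$, and this $\bar\bxi$ is exactly the subgradient whose existence the lemma asserts.

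Next I would isolate the $F$-part of the inequality. Substituting the decomposition and moving the proximal term to the other side gives, for every $\bz\in\Z$,
\[
\langle \bar\bxi,\bar\bw-\bz\rangle\leq \frac{1}{\gamma}\langle \bw-\bar\bw,\bar\bw-\bz\rangle.
\]
Cauchy--Schwarz bounds the right-hand side by $\frac{1}{\gamma}\|\bw-\bar\bw\|\,\|\bar\bw-\bz\|$, and since both $\bar\bw$ and $\bz$ lie in $\Z$, Assumption~\ref{ass:3}(i) yields $\|\bar\bw-\bz\|\leq D$. Taking the maximum over $\bz\in\Z$ then produces $\max_{\bz\in\Z}\langle \bar\bxi,\bar\bw-\bz\rangle\leq \frac{D}{\gamma}\|\bw-\bar\bw\|$, which is precisely \eqref{eq:zbw}.

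I do not expect a genuine obstacle: the argument is a one-line rearrangement of the SVI condition followed by Cauchy--Schwarz. The only point needing a little care is the bookkeeping of signs when negating the inner product, and ensuring that the specific element $\bar\bxi\in F(\bar\bw)$ extracted from $\bar\bxi'$ is the one carried through the whole estimate, so that the existential quantifier in \eqref{eq:zbw} is witnessed by a concrete subgradient rather than merely asserted. Notably, this is the exact analogue of Lemma~\ref{lem:zbw2}, which controls the distance to the stationarity set; here the same proximal identity instead controls the SVI gap, so the two results share essentially the same skeleton.
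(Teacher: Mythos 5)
Your proof is correct and follows exactly the paper's argument: extract the Stampacchia optimality condition for $\text{SVI}(F_{\bw}^\gamma,\Z)$, split off the proximal term $\frac{1}{\gamma}(\bar\bw-\bw)$ to isolate an element $\bar\bxi\in F(\bar\bw)$, then rearrange and apply Cauchy--Schwarz with the diameter bound $\|\bar\bw-\bz\|\leq D$. The paper's own proof is the same one-line rearrangement, so there is nothing to add.
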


According to this lemma, if we can find a solution $\bw\in\Z$ such that $\|\bw-\bar\bw\|\leq\frac{\gamma\epsilon}{D}$, we will have 
$\max_{\z\in\Z}\langle \bar\bxi,  \bar\bw - \bz \rangle\leq \epsilon$,  namely, $\bw$ is $(\gamma\epsilon/D)$-closed to an $\epsilon$-gap solution of $\text{SVI}(F,\Z)$. 

Next we present a proposition to show that, when $F(\z)$ is single-valued and Lipschitz continuous,  a nearly $(\epsilon,\delta)$-gap solution with $\delta=O(\epsilon)$ is an $O(\epsilon)$-gap solution to $\text{SVI}(F, \Z)$, and an $\epsilon$-gap solution $\w$ is $O(\sqrt{\epsilon})$-close to the solution of $\text{SVI}(F_{\w}^\gamma, \Z)$ for $\gamma\in(0,L^{-1})$. 
\begin{proposition}\label{prop:convert}
	When $F(\z)$ is single-valued and $L$-Lipschitz continuous, the following statements hold:
	\begin{itemize}[leftmargin=*]
		\item If $\w$ is a nearly $(\epsilon,c\epsilon)$-gap solution for $c>0$, then  $\w$ is an $(\left(1+LcD+Mc \right)\epsilon)$-gap solution  to $\text{SVI}(F, \Z)$, where $M\equiv \max_{\z\in\Z}\|F(\z)\|$.
		\item If $\w$ is an $\epsilon$-gap solution to $\text{SVI}(F, \Z)$, then $\|\w - \wh\|\leq \sqrt{\frac{\epsilon}{\gamma^{-1} - L}}$, where $\wh$ is the unique solution of $\text{SVI}(F_{\w}^\gamma, \Z)$ for $0<\gamma<L$. 
	\end{itemize}
\end{proposition}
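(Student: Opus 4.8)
The plan is to prove both statements as elementary conversions between the $\epsilon$-gap notion in \eqref{eqn:epsilonstr} and the nearly $(\epsilon,\delta)$-gap notion of Definition~\ref{def:nearlygap}, using only single-valuedness, $L$-Lipschitz continuity (hence $L$-weak monotonicity), and compactness of $\Z$. Because $F$ is single-valued, the subgradient in both definitions is just $F(\cdot)$, so the only quantity to track is the gap function $\bz\mapsto\langle F(\cdot),\,\cdot\,-\bz\rangle$. No deep machinery is needed; the content is in choosing the right decomposition for the first part and the right test points for the second.

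For the first statement, let $\bar\w$ be the anchor from the nearly $(\epsilon,c\epsilon)$-gap hypothesis, so that $\|\w-\bar\w\|\leq c\epsilon$ and $\max_{\bz\in\Z}\langle F(\bar\w),\bar\w-\bz\rangle\leq\epsilon$. Fixing an arbitrary $\bz\in\Z$, I would split the gap at $\w$ by inserting $F(\bar\w)$ and $\bar\w$:
\begin{align*}
\langle F(\w),\w-\bz\rangle
= \langle F(\w)-F(\bar\w),\,\w-\bz\rangle
+ \langle F(\bar\w),\,\w-\bar\w\rangle
+ \langle F(\bar\w),\,\bar\w-\bz\rangle.
\end{align*}
Then I would bound the three terms separately: the cross term by $L\|\w-\bar\w\|\,\|\w-\bz\|\leq LcD\epsilon$ (Cauchy--Schwarz, $L$-Lipschitz continuity, and the diameter bound $\|\w-\bz\|\leq D$ from Assumption~\ref{ass:3}(i)); the anchor-shift term by $\|F(\bar\w)\|\,\|\w-\bar\w\|\leq Mc\epsilon$ (definition of $M$); and the residual gap at $\bar\w$ by $\epsilon$ (the nearly-gap hypothesis). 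Since the first two bounds are uniform in $\bz$, taking the maximum over $\bz$ gives $\max_{\bz\in\Z}\langle F(\w),\w-\bz\rangle\leq(1+LcD+Mc)\epsilon$, as claimed.

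For the second statement, the idea is to play the two variational inequalities against each other. The point $\wh$ exists and is unique because $F_{\w}^\gamma$ is $(\gamma^{-1}-L)$-strongly monotone by Lemma~\ref{lem:PPM} (taking $\rho=L$), provided $\gamma<L^{-1}$ so that this modulus is positive. Testing the optimality of $\wh$ for $\text{SVI}(F_{\w}^\gamma,\Z)$ at $\bz=\w$ gives $\langle F(\wh)+\gamma^{-1}(\wh-\w),\,\w-\wh\rangle\geq0$, i.e. $\langle F(\wh),\w-\wh\rangle\geq\gamma^{-1}\|\w-\wh\|^2$. Testing the $\epsilon$-gap property of $\w$ at $\bz=\wh$ gives $\langle F(\w),\w-\wh\rangle\leq\epsilon$. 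Bridging $F(\wh)$ and $F(\w)$ through $L$-weak monotonicity, $\langle F(\w)-F(\wh),\w-\wh\rangle\geq-L\|\w-\wh\|^2$, then chains these together:
\begin{align*}
\gamma^{-1}\|\w-\wh\|^2
\leq \langle F(\wh),\w-\wh\rangle
\leq \langle F(\w),\w-\wh\rangle + L\|\w-\wh\|^2
\leq \epsilon + L\|\w-\wh\|^2.
\end{align*}
Rearranging gives $(\gamma^{-1}-L)\|\w-\wh\|^2\leq\epsilon$, hence the stated bound $\|\w-\wh\|\leq\sqrt{\epsilon/(\gamma^{-1}-L)}$.

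The main obstacle, to the extent there is one, is bookkeeping rather than conceptual: in the second part one must select the test points ($\bz=\w$ in $\wh$'s SVI and $\bz=\wh$ in $\w$'s gap) and orient the weak-monotonicity inequality so that every sign aligns to leave a clean $(\gamma^{-1}-L)\|\w-\wh\|^2$ on one side. I would also flag the constraint $\gamma<L^{-1}$ (rather than $\gamma<L$) as exactly the condition that makes $\gamma^{-1}-L$ positive and $\wh$ well defined, so that the bound is meaningful.
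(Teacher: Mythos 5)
Your proof is correct and follows essentially the same route as the paper's: the same three-term splitting of the gap at $\w$ through the anchor point for the first statement (your decomposition attaches the Lipschitz bound to $\|\w-\bz\|$ rather than $\|\wh-\bz\|$, but both are bounded by $D$ and yield the identical constant $(1+LcD+Mc)\epsilon$), and the same pairing of the two variational inequalities with the Lipschitz/weak-monotonicity bridge for the second. Your flag that the condition should read $0<\gamma<L^{-1}$ rather than $0<\gamma<L$ is also correct --- the paper's own proof works with $\gamma\in(0,L^{-1})$, which is exactly what makes $\gamma^{-1}-L$ positive.
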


This proposition indicates that a nearly $\epsilon$-gap solution  is the right target when solving $\text{SVI}(F, \Z)$ no matter $F$ is single-valued Lipschitz continuous or set-valued non-Lipschitz. 

Now consider the second statement. Suppose $\bar \bz\in\Z$ is an $\epsilon$-gap solution of $\text{SVI}(F,\Z)$ such that 
$\max_{\bz\in\Z}\langle F(\bar \bz), \bar\bz - \bz\rangle\leq \epsilon.$ Note that $F_{\bar\bz}^\gamma$ is $\left(\frac{1}{\gamma}-L\right)$-strongly monotone for $\gamma\in(0,L^{-1})$ because $F$ is $L$-Lipschitz continuous. Let $\widehat\bz$ be the unique solution of $\text{SVI}(F_{\bar\bz}^\gamma,\Z)$ for $\gamma\in(0,L^{-1})$. We then have 
$$
\left\langle F(\widehat\bz)+\frac{1}{\gamma}(\widehat\bz-\bar\bz),\widehat\bz-\bar\bz\right\rangle\leq 0
$$
which, by the Lipschitz continuity of $F$, implies
$$
\frac{1}{\gamma}\|\widehat\bz-\bar\bz\|^2\leq 
\left\langle F(\widehat\bz),\bar\bz-\widehat\bz\right\rangle
\leq \left\langle F(\bar\bz),\bar\bz-\widehat\bz\right\rangle+L\|\bar\bz-\widehat\bz\|^2
\leq \epsilon+L\|\bar\bz-\widehat\bz\|^2.
$$
By reorganizing terms, we have $\left(\frac{1}{\gamma}-L\right)\|\bar\bz-\widehat\bz\|^2\leq \epsilon$ which leads to the conclusion of the second statement.

The next two corollaries summarize the convergence results of Algorithm~\ref{alg:meta} for the solving the SVI problem~(\ref{eq:SVI}). 
\begin{corollary}\label{thm:sgd-2}
	For the SVI problem~(\ref{eq:SVI}), assume that Assumption~\ref{ass:3} holds and for any $\z\in\Z$ there exists $\bzeta(\z)\in F(\z)$ such that  $\E[\bzeta(\z)]\in F(\z)$, and $\E\|\bzeta(\z)\|^2\leq G^2$. Under the same conditions as in Corollary~\ref{thm:sgd} except for   $K=\frac{(16\rho^2D^2+4\rho DG)D^2(\alpha+1)}{\epsilon^2}$  in Algorithm~\ref{alg:meta}, we have
	\begin{align*}
	&\E[ \|\z_\tau - \bar\z_\tau \|^2]\leq \frac{\gamma^2\epsilon^2}{D^2},\\
	&	\exists \bxi\in F(\bar\z_\tau)\text{ s.t. }\E[\max_{\z\in\Z}\langle \bxi, \bar\z_\tau - \bz\rangle]\leq  \epsilon,
	\end{align*}
	where $\bar\z_\tau$ is the solution to $\text{SVI}(F^\gamma_{\z_\tau}, \Z)$. 
	The  total iteration complexity of $O(\frac{1}{\epsilon^6})$.
\end{corollary}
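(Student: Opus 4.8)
The plan is to mirror the argument for Corollary~\ref{thm:sgd} almost verbatim, replacing the stationarity bound of Lemma~\ref{lem:zbw2} by the gap bound of Lemma~\ref{lem:zbw} and inflating the number of outer stages $K$ to absorb the extra diameter factor. First I would verify that the stochastic subgradient method (Algorithm~\ref{alg:SG}), used as $\text{ApproxSVI}$ with the step sizes inherited from Corollary~\ref{thm:sgd}, namely $\eta_k=\frac{D}{G_k(k+1)}$, $G_k=G+2\rho D$, and $T_k=(k+1)^2$, satisfies hypothesis~\eqref{eqn:sa_converge} of Theorem~\ref{thm:meta}. Since $F_k=F+\gamma^{-1}(\z-\z_k)$ is (strongly, hence) monotone and its stochastic oracle $\bzeta(\z)+\gamma^{-1}(\z-\z_k)$ has second moment bounded by $G_k^2$ (using $\gamma^{-1}=2\rho$ and $\|\z-\z_k\|\leq D$ from Assumption~\ref{ass:3}), Proposition~\ref{prop:alg1} with these parameters yields $\max_{\z\in\Z}\E[\bxi_{k+1}^\top(\z_{k+1}-\z)\mid\z_k]\leq\frac{DG_k}{k+1}$, so~\eqref{eqn:sa_converge} holds with $c=D(G+2\rho D)$, exactly as before.

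Next I would feed this $c$ into Theorem~\ref{thm:meta}, which with $\gamma=\frac{1}{2\rho}$ and $\theta_k=(k+1)^\alpha$, $\alpha\geq 1$, gives $\E[\|\z_\tau-\bar\z_\tau\|^2]\leq\frac{2D^2(\alpha+1)}{K}+\frac{4c(\alpha+1)}{K\rho}$. The only structural change from Corollary~\ref{thm:sgd} is the target: to use the gap bound I need $\|\z_\tau-\bar\z_\tau\|$ to be a factor $D$ smaller, hence its square a factor $D^2$ smaller. Choosing $K=\frac{(16\rho^2D^2+4\rho DG)D^2(\alpha+1)}{\epsilon^2}$, which is precisely $D^2$ times the value used in Corollary~\ref{thm:sgd}, pushes the right-hand side below $\frac{\gamma^2\epsilon^2}{D^2}$ and establishes the first displayed inequality $\E[\|\z_\tau-\bar\z_\tau\|^2]\leq\frac{\gamma^2\epsilon^2}{D^2}$.

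To obtain the gap conclusion I would invoke Lemma~\ref{lem:zbw}, which furnishes some $\bxi\in F(\bar\z_\tau)$ with $\max_{\z\in\Z}\langle\bxi,\bar\z_\tau-\z\rangle\leq\frac{D}{\gamma}\|\z_\tau-\bar\z_\tau\|$. Taking expectations, applying Jensen's inequality $\E[\|\z_\tau-\bar\z_\tau\|]\leq\sqrt{\E[\|\z_\tau-\bar\z_\tau\|^2]}\leq\frac{\gamma\epsilon}{D}$, and substituting gives $\E[\max_{\z\in\Z}\langle\bxi,\bar\z_\tau-\z\rangle]\leq\frac{D}{\gamma}\cdot\frac{\gamma\epsilon}{D}=\epsilon$, which is the second conclusion and certifies $\z_\tau$ as a nearly $\epsilon$-gap solution in the sense of Definition~\ref{def:nearlygap}. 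Finally, the total iteration complexity is $\sum_{k=0}^{K-1}T_k=\sum_{k=0}^{K-1}(k+1)^2=O(K^3)$, and since $K=O(1/\epsilon^2)$ this is $O(1/\epsilon^6)$.

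The computations are routine; the only delicate points are the two features that separate this result from Corollary~\ref{thm:sgd}. The extra factor of $D$ in Lemma~\ref{lem:zbw} (absent from the stationarity bound of Lemma~\ref{lem:zbw2}) is exactly what forces the $D^2$-fold increase in $K$, and the fact that the gap is \emph{linear} in $\|\z_\tau-\bar\z_\tau\|$ while Theorem~\ref{thm:meta} controls only the \emph{squared} distance forces the Jensen step, so one must track the square root carefully to land at $\epsilon$ rather than $\epsilon^2$.
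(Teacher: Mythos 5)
Your proposal is correct and follows essentially the same route the paper intends: reuse the verification of condition~\eqref{eqn:sa_converge} with $c=D(G+2\rho D)$ from Corollary~\ref{thm:sgd}, inflate $K$ by the factor $D^2$ so that Theorem~\ref{thm:meta} yields $\E[\|\z_\tau-\bar\z_\tau\|^2]\leq \gamma^2\epsilon^2/D^2$, and then pass to the gap via Lemma~\ref{lem:zbw} and Jensen's inequality, with $\sum_{k=0}^{K-1}(k+1)^2=O(K^3)=O(1/\epsilon^6)$ for the total complexity. The only delicate points (the extra $D$ from Lemma~\ref{lem:zbw} versus Lemma~\ref{lem:zbw2}, and the square-root step needed because the theorem controls the squared distance) are exactly the ones you flag, so nothing is missing.
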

{\bf Remark: } The above result establishes the convergence result for finding a nearly $(\epsilon,\frac{\gamma\epsilon}{D})$-gap solution for $\text{SVI}(F, \Z)$. The total iteration complexity can be easily derived as $\sum_{k=1}^Kk^2 = O(1/\epsilon^6)$. To our knowledge, this is the first non-asymptotic convergence of stochastic algorithms for solving SVI without the monotone and Lipschitz conditions. 
\begin{corollary}\label{thm:gd-2}
	For the SVI problem~(\ref{eq:SVI}), assume that Assumption~\ref{ass:3} holds  and  $F$ is single-valued and $L_k$-Lipschitz continuous.  Under the same conditions as in Corollary~\ref{thm:gd} except for   $K= \frac{16\rho^2 D^4(\alpha+1)}{\epsilon^2}$ in Algorithm~\ref{alg:meta}, we have
	\begin{align*}
	&\E[ \|\z_\tau - \bar\z_\tau \|^2]\leq \frac{\gamma^2\epsilon^2}{D^2},\quad
	\E[\max_{\z\in\Z}\langle F(\bar \z_\tau), \bar\z_\tau - \bz\rangle]\leq  \epsilon
	\end{align*}
	where $\bar\z_\tau$ is the solution to $\text{SVI}(F^\gamma_{\z_\tau}, \Z)$. 
	The total iteration complexity of $O(\log(\frac{1}{\epsilon})\frac{D^4L^2}{\epsilon^2})$.
\end{corollary}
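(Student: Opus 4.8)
The plan is to mirror the proof of Corollary~\ref{thm:gd}, replacing the stationarity estimate of Lemma~\ref{lem:zbw2} (which is tied to the min-max objective) with the gap estimate of Lemma~\ref{lem:zbw} (which is tied to the SVI). First I would verify that the gradient descent subroutine (Algorithm~\ref{alg:GD}) meets the accuracy requirement~\eqref{eqn:sa_converge} of Theorem~\ref{thm:meta}. Since $\gamma=1/(2\rho)$, Lemma~\ref{lem:PPM} makes $F_k\equiv F_{\z_k}^\gamma$ exactly $\rho$-strongly monotone, and it is $(L+2\rho)$-Lipschitz because $F$ is $L$-Lipschitz and $\gamma^{-1}(\z-\z_k)$ is $2\rho$-Lipschitz; hence $\beta=(L+2\rho)/\rho$. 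Feeding the prescribed $T_k$ and $\eta_k$ into Proposition~\ref{prop:gd} (Part~I) and using that $F_k$ is single-valued (so $\xi_{k+1}=F_k(\z_{k+1})$ and no conditional expectation is needed) yields $\max_{\z\in\Z}F_k(\z_{k+1})^{\top}(\z_{k+1}-\z)\leq c/(k+1)$ with $c=D^2\rho/2$. This is precisely the bookkeeping already done in Corollary~\ref{thm:gd}, so I would simply import it.

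Next I would invoke Theorem~\ref{thm:meta} with this $c$. Substituting $c=D^2\rho/2$ into~\eqref{eqn:fc} collapses the two terms to $\E[\|\z_\tau-\bar\z_\tau\|^2]\leq 4D^2(\alpha+1)/K$. Choosing $K=16\rho^2 D^4(\alpha+1)/\epsilon^2$ then gives $\E[\|\z_\tau-\bar\z_\tau\|^2]\leq \epsilon^2/(4\rho^2 D^2)=\gamma^2\epsilon^2/D^2$, which is the first claimed inequality. The only change from Corollary~\ref{thm:gd} is the extra factor $D^2$ in $K$ (i.e.\ $D^4$ rather than $D^2$), and the reason for this will surface in the next step.

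For the gap bound I would apply Lemma~\ref{lem:zbw} pointwise with $\w=\z_\tau$ and $\bar\w=\bar\z_\tau$. Because $F$ is single-valued, the guaranteed element is $F(\bar\z_\tau)$ itself, so the lemma reads $\max_{\z\in\Z}\langle F(\bar\z_\tau),\bar\z_\tau-\z\rangle\leq (D/\gamma)\|\z_\tau-\bar\z_\tau\|$. Here lies the one genuinely new step compared with the min-max corollaries: the bound of Lemma~\ref{lem:zbw} is \emph{linear} in $\|\z_\tau-\bar\z_\tau\|$, whereas Lemma~\ref{lem:zbw2} was exploited through its square. Taking expectations and applying Jensen's inequality, $\E[\max_{\z\in\Z}\langle F(\bar\z_\tau),\bar\z_\tau-\z\rangle]\leq (D/\gamma)\E[\|\z_\tau-\bar\z_\tau\|]\leq (D/\gamma)\sqrt{\E[\|\z_\tau-\bar\z_\tau\|^2]}\leq (D/\gamma)\cdot(\gamma\epsilon/D)=\epsilon$. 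This is exactly why the target for the squared distance had to be sharpened from $\gamma^2\epsilon^2$ to $\gamma^2\epsilon^2/D^2$, which accounts for the $D^2$ inflation of $K$.

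Finally, for the complexity I would sum $T_k$ over the $K$ stages. Since $T_k=O(\beta^2\log(\beta^2(k+1)))$ with $\beta=(L+2\rho)/\rho$, the logarithmic factor is $O(\log(1/\epsilon))$ uniformly over $k\leq K$, so $\sum_{k=0}^{K-1}T_k=O(K\,\beta^2\log(1/\epsilon))$; substituting $K=O(\rho^2 D^4/\epsilon^2)$ cancels the $\rho^2$ against the $\rho^{-2}$ in $\beta^2$ and produces the advertised $O(\log(1/\epsilon)\,D^4L^2/\epsilon^2)$. I expect no serious obstacle: the argument is essentially a transcription of Corollary~\ref{thm:gd} with Lemma~\ref{lem:zbw} in place of Lemma~\ref{lem:zbw2}, and the only point demanding care is the Jensen step, which is what forces the additional $D$-dependence in the stage count.
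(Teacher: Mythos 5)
Your proposal is correct and follows exactly the route the paper intends (the paper states this corollary without a separate proof, relying on the chain Proposition~\ref{prop:gd} $\to$ Theorem~\ref{thm:meta} $\to$ Lemma~\ref{lem:zbw}): the bookkeeping $c=D^2\rho/2$, the collapse of \eqref{eqn:fc} to $4D^2(\alpha+1)/K$, the Jensen step explaining the $D^4$ stage count, and the complexity tally all check out.
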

{\bf Remark: }One can similarly derive the results based on the Nesterov's accelerated method~\cite{1078-0947Nesterov}  and the extragradient method~\cite{10017556617}, which can improve the above complexity when the condition number $L/\rho\gg 1$ is large. Next, we compare the above result with~\cite{DBLP:journals/coap/DangL15} for solving  a SVI with $L$-Lipschitz continuous single-valued mapping, which presents an extragradient algorithm and needs to compute two gradient updates at each iteration and has an iteration complexity of $O(L^2D^4/\epsilon^2)$ for finding an $\epsilon$-gap solution to the SVI.  Without additional assumption on the value of $\rho$, we can set $\rho=L$. Then the  iteration complexity of Algorithm~\ref{alg:meta} for ensuring  $\E[\|\z_\tau - \bar\z_\tau\|]\leq \epsilon/(LD)$ and $\E[\max_{\z\in\Z}\langle F(\bar \z_\tau), \bar\z_\tau - \bz\rangle]\leq 2\epsilon$ is $O(L^2D^4/\epsilon^2\log(1/\epsilon))$, which implies that $\E[\max_{\z\in\Z}\langle F(\z_\tau), \z_\tau - \bz\rangle]\leq O(\epsilon)$ according to Proposition~\ref{prop:convert}. As we can see our complexity is worse by a $\log(1/\epsilon)$ factor but only needs to perform one gradient update at each iteration.

{\bf Discussion: $\epsilon$-gap vs $\epsilon$-stationary.} 
Recall the definition of an $\epsilon$-stationary solution: 
\begin{align}\label{eq:minmaxstationary}
\text{dist}(\mathbf{0}, \partial (f(\x, \y)+ 1_{\Z}(\x,\y))\leq \epsilon,
\end{align}
It is an interesting question whether one can derive an $\epsilon$-stationary result for the min-max problem (i.e., \eqref{eq:minmaxstationary}) from an $\epsilon$-gap solution that satisfies \eqref{eqn:epsilonstr} of the corresponding SVI. We will show that \eqref{eq:minmaxstationary} is a stronger result than  \eqref{eqn:epsilonstr}, i.e., $\epsilon$-stationary solution is also an $O(\epsilon)$-gap solution of the corresponding SVI but not vice versa. 
Let $F(\bz) = (\partial_x f(\x, \y), - \partial_y f(\x, \y))^{\top}$ and $\Z=\X\times\Y$. Suppose a solution $\bar\bz$ is found such that $\text{dist}^2(0, F(\bar\bz)+ 1_{\Z}(\bar\bz))\leq \epsilon^2$. Hence, there exists $\bar\bxi\in F(\bar\bz)$ and $\bar\bzt\in \partial 1_{\Z}(\bar\bz)$ such that $\|\bar\bxi+\bar\bzt\|\leq \epsilon$. Note that $\bar\bzt$ is a vector in the normal cone of $\Z$ at $\bar\bz$ so that $\langle \bar\bzt, \bar\bz - \bz\rangle\geq 0$ for any $\bz\in\Z$. Hence, we can easily show that 
$$
\max_{\bz\in\Z}\langle \bar\bxi, \bar\bz - \bz\rangle\leq \max_{\bz\in\Z}\langle \bar\bxi+\bar\bzt, \bar\bz - \bz\rangle\leq \|\bar\bxi+\bar\bzt\|D\leq \epsilon D.
$$
This means an $\epsilon$-stationary solution is an $\epsilon D$-gap solution for the corresponding SVI.  However, the reversed direction is not true. We consider the following problem in $\mathbb{R}^2\times\mathbb{R}$ 
$
\min_{x_1^2+x_2^2\leq r^2}\max_{y\in[-r,r]} x_1
$
where the objective function is also viewed as a (constant) function of $y$. Consider the solution $\bar\bz=(\bar x_1,\bar x_2,\bar y)=(0,r,0)$ which is on the boundary of the feasible region and corresponds to $F(\bar\bz)+ 1_{\Z}(\bar\bz)=\{\bxi\in\mathbb{R}^3|\xi_1=1,\xi_2\geq 0,\xi_3=0\}$. As a result, $\text{dist}^2(0, \partial (f(\bar\x, \bar\y)+ 1_{\Z}(\bar\x, \bar\y))=1$ but, because $F(\bar\bz)=(1,0,0)$, we have
$\max_{\bz\in\Z}\langle F(\bar\bz), \bar\bz - \bz\rangle=r$. Then, if $r$ is small, we have $\bz$ satisfying \eqref{eqn:epsilonstr}. This means \eqref{eq:minmaxstationary} is stronger than \eqref{eqn:epsilonstr}.

\subsection{Proof of Lemma~\ref{lem:zbw}}
\begin{proof}[Proof of Lemma~\ref{lem:zbw}]
	Since $\bar\bw$ is the solution to $\text{SVI}(F_{\bw}^\gamma,\Z)$, there exists $\bar\bxi\in F(\bar\bw)$ such that 
	\small
	\begin{eqnarray*}
		\left\langle \bar\bxi+\frac{1}{\gamma}(\bar\bw-\bw), \bz-\bar\bw\right\rangle\geq 0,
	\end{eqnarray*}
	\normalsize
	for any $\bz\in\Z$. The conclusion is proved by reorganizing terms and using the fact that $\|\bz-\bar\bw\|\leq  D$.
\end{proof}

\subsection{Proof of Proposition~\ref{prop:convert}}
\label{sec:prop:convert}
Consider the first statement. Suppose $\w\in \Z$ is a nearly $(\epsilon,c\epsilon)$-gap solution $\text{SVI}(F,\Z)$. By its definition, there exists $\wh\in \Z$ such that  $\|\w - \wh\|\leq c\epsilon$ and $\max_{\bz\in\Z}\langle F(\wh), \wh - \bz\rangle\leq \epsilon$.
Since $F(\z)$ is Lipschitz continuous and $\Z$ is compact, $M$ is finite. 
Then for any $\z\in\Z$ we have
\begin{align*}
\langle F( \w), \w - \bz\rangle&\leq \langle F(\wh), \wh - \bz\rangle + \langle  F( \w) - F(\wh), \wh - \z\rangle +\langle  F( \w), \w - \wh\rangle\\
&\leq \epsilon + L\| \w-\wh\|\| \wh-\bz\| + M\|\w-\wh\|\\
&\leq \left(1+LcD+Mc \right)\epsilon
\end{align*}


In light of the above discussion, let us compare the result of applying the extragradient algorithm analyzed in~\citep{DBLP:journals/coap/DangL15} to the min-max problem~(\ref{eqn:P}) when $f(\x, \y)$ is smooth both in $\x$ and $\y$ such that the corresponding $F(\z)$ is Lipschitz continuous. Their result is that finding a $\bar\z$ such that $\max_{\bz\in\Z}\langle F(\bar\z), \bar\bz - \bz\rangle\leq \epsilon$ requires a complexity of $O(1/\epsilon^2)$. According to Proposition~\ref{prop:convert}, this implies that for finding a nearly $\epsilon$-stationary solution to the min-max problem, the complexity of the extragradient algorithm analyzed in~\citep{DBLP:journals/coap/DangL15} is $O(1/\epsilon^4)$. In contrast, our complexity in Corollary~\ref{thm:gd-2} is $O(\log(1/\epsilon)/\epsilon^2)$. It is worth mentioning that it is unclear whether an improved analysis of the extragradient method as in~\citep{DBLP:journals/coap/DangL15} can have a better complexity than $O(1/\epsilon^4)$ for finding a nearly $\epsilon$-stationary solution for a min-max saddle-point problem.

\vskip 0.2in

\bibliographystyle{plain}

\end{document}